\documentclass[11pt,reqno]{amsart}
\usepackage{amsmath}
\usepackage{amsthm}
\usepackage{amssymb,prettyref,appendix,xcolor,graphicx}
\usepackage{comment}
\usepackage{microtype}
\binoppenalty=9999
\relpenalty=9999

\makeatletter
\numberwithin{equation}{section}
\numberwithin{figure}{section}
\theoremstyle{plain}
\newtheorem{thm}{Theorem}[section]
  \newtheorem{lem}[thm]{Lemma}
  \newtheorem{cor}[thm]{Corollary}
  \newtheorem{prop}[thm]{Proposition}
\theoremstyle{definition}
\newtheorem{rem}[thm]{Remark}

\usepackage{mathrsfs,amssymb,bbm,fullpage}
\DeclareFontFamily{U}{mathx}{\hyphenchar\font45}
\DeclareFontShape{U}{mathx}{m}{n}{<-> mathx10}{}
\DeclareSymbolFont{mathx}{U}{mathx}{m}{n}
\DeclareMathAccent{\widebar}{0}{mathx}{"73}
\usepackage[pdfpagemode=UseNone,bookmarksopen=false,colorlinks=true,urlcolor=blue,citecolor=magenta,citebordercolor=blue,linkcolor=blue]{hyperref}

\renewcommand{\liminf}{\varliminf}
\renewcommand{\limsup}{\varlimsup}

\newcommand{\bS}{\mathbb S}

\newcommand{\cC}{\mathcal{C}}

\newcommand{\cN}{\mathcal{N}}
\newcommand{\cE}{\mathcal{E}}

\newcommand{\cG}{\mathcal{G}}

\newcommand{\cP}{\mathcal{P}}

\newcommand{\cR}{\mathcal{R}}

\newcommand{\R}{\mathbb{R}}

\newcommand{\prob}{\mathbb{P}}
\newcommand{\E}{\mathbb{E}}
\newcommand{\eps}{\epsilon}

\newcommand{\indicator}[1]{\mathbbm{1}_{#1}}

\newcommand{\eqdist}{\stackrel{(d)}{=}}
\newcommand{\tensor}{\otimes}

\newcommand{\abs}[1]{\lvert#1\rvert}
\newcommand{\norm}[1]{\lvert\lvert#1\rvert\rvert}

\newcommand{\Var}{\operatorname{Var}}
\newcommand{\gibbs}[1]{\left\langle#1\right\rangle}
\newcommand{\what}[1]{\widehat{#1}}

\DeclareMathOperator{\argmax}{arg\,max}

\newrefformat{prop}{Proposition~\ref{#1}}
\newrefformat{cor}{Corollary~\ref{#1}}
\newrefformat{rem}{Remark~\ref{#1}}
\newrefformat{app}{Appendix~\ref{#1}}




\begin{document}

\title{Statistical thresholds for Tensor PCA}

\author{Aukosh Jagannath}
\author{Patrick Lopatto}
\author{L\'eo Miolane}

\address[Aukosh Jagannath]{Department of Mathematics, Harvard University, Department of Applied Math, University of Waterloo, Department of Statistics and Actuarial Sciences, University of Waterloo}
\email{a.jagannath@uwaterloo.ca}
\address[Patrick Lopatto]{Department of Mathematics, Harvard University}
\email{lopatto@math.harvard.edu}
\address[L\'eo Miolane]{INRIA and D\'epartement d'Informatique de l'\'Ecole Normale Sup\'erieure, Paris}
\email{leo.miolane@gmail.com}

\maketitle
\begin{abstract}
We study the statistical limits of testing and estimation for a rank one deformation of a Gaussian random tensor. 
We compute the sharp thresholds for hypothesis testing and estimation by maximum likelihood and show that they are the same.
Furthermore, we find that {the} maximum likelihood {estimator} achieves the maximal correlation  with the planted vector {among measurable estimators} above the estimation threshold. 
In this setting, the maximum likelihood estimator exhibits a discontinuous BBP-type transition: below the critical threshold the estimator is orthogonal to the planted vector, 
but above the critical threshold, it achieves positive correlation which is uniformly bounded away from zero. 
\end{abstract}

\section{Introduction}

Suppose that we are given an observation, $Y$, which is a $k$-tensor of rank $1$ in dimension $N$ subject to additive Gaussian noise. That is, 
\begin{equation}\label{eq:spiked-tensor-def}
Y=\lambda\sqrt{N}X^{\tensor k}+W,
\end{equation}
where $X\in\mathbb{S}^{N-1}$, the unit sphere in $\R^{N}$, $W$ is an i.i.d.\ Gaussian $k$-tensor with $W_{i_{1}\ldots i_{k}}\sim\cN(0,1)$, and $\lambda\geq0$ is called the signal-to-noise ratio.{\footnote{We note here that none of our results are changed if one symmetrizes $W$, i.e., if we work with the symmetric Gaussian $k$-tensor.}} Throughout this paper, we assume
that $X$ is drawn from an uninformative prior, namely the uniform distribution on $\mathbb S^{N-1}$.
We study the fundamental limits of two natural statistical tasks. The first task is that of hypothesis testing: for what range of $\lambda$ is it statistically possible to distinguish the law of $W$, the null hypothesis, from the law of $Y$, the alternative?
The second task is one of estimation: for what range of $\lambda$ does the maximum likelihood estimator of X, $\widehat{x}^{\rm ML}_{\lambda}(Y)$, achieve asymptotically positive inner product with $X$?

When $k=2$, this amounts to hypothesis testing and estimation for the well-known spiked matrix model.
Here, maximum likelihood estimation corresponds to computing the top eigenvector of $Y$.
This problem was proposed as a natural statistical model of {principal} component analysis \cite{johnstone2001distribution}. 
It is a fundamental result of random matrix theory that there is a critical threshold below 
which the spectral theory of $Y$ and $W$ are asymptotically equivalent, 
but above which the maximum likelihood estimator achieves asymptotically 
positive inner product with $X${---called the {correlation} of the estimator with $X$---}where the correlation increases continuously 
from $0$ to $1$ as $\lambda$ tends to infinity after $N$ 
\cite{edwards1976eigenvalue, baik2005phase,peche2006largest,baik2006eigenvalues,feral2007largest,capitaine2009largest,benaych2011eigenvalues}. 
This transition is called the BBP transition after the authors of \cite{baik2005phase} and has received a tremendous amount of 
attention in the random matrix theory community. Far richer information is also known, such as universality, large deviations, and 
fluctuation theorems. For a small sample of work in this direction, see 
\cite{maida2007,BGM12,bloemendal2013limits,bloemendal2016principal}. More recently, it has been shown that the BBP transition is also the transition for hypothesis testing \cite{montanari2015limitation}. 
See also \cite{deshpande2016asymptotic,barbier2016mutual,perry2016optimality,banks2016information,lelarge2016fundamental,lesieur2017constrained,alaoui2017finite} for analyses of the testing and estimation problem with different prior distributions. 

Our goal in this paper is to understand the case $k\geq3$, which is called the spiked tensor problem. 
This was introduced \cite{richard2014statistical} as a natural generalization of the above to testing 
and estimation problems where the data has more than two indices or requires higher moments, which occurs throughout 
data science \cite{li2010tensor,duchenne2011tensor,anandkumar2014tensor}. 
In this setting, it is known that there is an order 1 lower bound on the threshold for hypothesis testing which is asymptotically tight in $k$ \cite{richard2014statistical,perry2016statistical} and an order 1 upper bound on the threshold for estimation via the maximum likelihood \cite{montanari2015limitation,perry2016statistical}.
On the other hand, if one imposes a more informative, product prior distribution,
 i.e., $X\sim\mu_{0}^{\tensor N}$ for some $\mu_{0}\in\Pr(\R)$, the threshold for minimal mean-square error estimati{on} (MMSE)  has been computed exactly \cite{korada2009exact,lesieur2017statistical,barbier2017stochastic} 
as has the threshold for hypothesis testing under the additional assumption that $\mu_{0}$ is compactly 
supported \cite{perry2016statistical,chen2017phase,chen2018phase}. We note that by a standard 
approximation argument (see \prettyref{prop:IT} below), the results of \cite{lesieur2017statistical,barbier2017stochastic} also imply a sharp threshold $\lambda_{c}$ for which the MMSE 
achieves non-trivial correlation for the uniform prior considered here. 

The authors of \cite{BMMN17} and \cite{BBCR18} began a deep geometric approach to studying this problem by studying the 
geometry of the sub-level sets of the log-likelihood function. {In \cite{BMMN17}, the authors compute the (normalized) logarithm of the expected number of local minima below a certain energy level via the Kac-Rice approach and show that there is a transition at a point $\lambda_s$ such that for $\lambda<\lambda_s$ this is negative
for any strictly positive correlation, and for $\lambda>\lambda_s$ it has a zero with correlation bounded away from zero. {The work} \cite{BBCR18} study the (normalized) logarithm of the (random) number of local minima via a novel (but non rigorous) replica theoretic approach.} In particular, they predict that this problem exhibits a much more dramatic 
transition than the BBP transition. They argued that there are in fact two transitions for the log-likelihood, called $\lambda_{s}$ 
and $\lambda_{c}$.  First, for $\lambda<\lambda_{s}$, all local maxima of the log-likelihood only achieve asymptotically vanishing 
correlation. For $\lambda_s<\lambda<\lambda_{c}$, there is a local maximum of the log-likelihood with non-trivial correlation but 
the maximum likelihood estimator still has vanishing correlation.  Finally, for $\lambda_{c}<\lambda$ the maximum likelihood estimator has strictly positive correlation. In particular, if we let $m(\lambda)$, denote the limiting value of the correlation of the maximum likelihood estimator and $X$, they predict that $m(\lambda)$ has a jump discontinuity at $\lambda_{c}$. 
Finally, they predict that $\lambda_{c}$ should correspond to the hypothesis testing threshold. We verify several of these predictions.

We obtain here the sharp threshold for hypothesis testing and estimation by maximum likelihood and show that they are equal to $\lambda_c$.
Furthermore, we compute the asymptotic correlation between $\widehat{x}^{\rm ML}_\lambda$ and $X$, 
$(\widehat{x}^{\rm ML}_\lambda,X)$, where $(\cdot,\cdot)$ denotes the Euclidean inner product. We find that the maximum likelihood estimator achieves the maximal correlation among
measurable estimators, and that it is discontinuous at $\lambda_c$. This is in contrast to the matrix setting ($k=2)$, where this transition is continuous. 
As a consequence of these results, the threshold $\lambda_c$ is also the threshold for multiple hypothesis testing: the maximum likelihood  is able to distinguish between all of the hypotheses $\lambda>\lambda_c$. 
{Finally, as a consequence of our arguments, we compute the maximum of the log-likelihood for fixed correlation $m$, call it $E_\lambda(m)$, and find that, for $\lambda_s<\lambda<\lambda_c$, $E_\lambda(m)$ has a local maximum at some $m_s>0$.}

These testing and estimation problems have received a tremendous amount of attention recently 
as they are expected to be an extreme example of statistical problems that admit a \emph{statistical-to-algorithmic gap}:
the thresholds for estimation and detection are both order $1$ in $N$; on the other hand,
the thresholds for efficient testing and estimation are expected to diverge polynomially in $N$, $\lambda_{\mathrm{alg}} = O(N^\alpha)$. Indeed, this problem is known to be NP-hard for all $\lambda$ \cite{HiLi13}.
Sharp algorithmic thresholds have been shown for semi-definite and spectral relaxations of the maximum likelihood problem  \cite{hopkins2015tensor,hopkins2016fast,kim2017community} as well as optimization of the likelihood itself via 
Langevin dynamics \cite{BGJ18}. Upper bounds have also been obtained for message passing and power iteration \cite{richard2014statistical}, as well as gradient descent \cite{BGJ18}. 
Our work complements these results by providing sharp statistical thresholds for maximum likelihood estimation and hypothesis testing.

Let us now discuss our main results and methods. 
We begin this paper by computing the sharp threshold for hypothesis testing. 
 There have been two approaches to this in the literature to date. One is by 
a modified second moment method \cite{montanari2015limitation,perry2016statistical}, which yields sharp results in the limit that $k$ tends to infinity after $N$. The other approach, which we take here, is to control the fluctuations of the log-likelihood and yields sharp results for finite $k$. The key idea behind this approach is to prove a correspondence between the statistical threshold for hypothesis testing
and a phase transition, called the ``replica symmetry breaking'' transition, in a corresponding statistical physics problem.
For more on this connection see \prettyref{sec:connection-to-sg} below.

Previous approaches to making this connection precise apply to the bounded i.i.d.\ prior setting 
\cite{korada2009exact,perry2016statistical,chen2017phase,chen2018phase}. There one may apply a techincal, inductive argument of 
Talagrand \cite{talagrand2010meanfield1} related to the ``cavity method'' \cite{mezard1987spin} to control these fluctuations.  This 
approach uses the boundedness and product 
assumption on $\mu_0$ in an essential way, neither of which hold in our setting (though we note here the work 
\cite{panchenko2009cavity} which applies for $\lambda$  sufficiently small). Our main technical contribution in this direction is a simpler, 
large deviations based approach which allows us to obtain the sharp threshold without using the cavity method. 
This argument applies with little modification to the product prior setting as well, though we do not investigate 
this here{. For more on this see \prettyref{rem:LDP}.}

We then turn to computing the threshold for maximum likelihood estimation. We begin by directly computing the 
 almost sure limit of the normalized maximum likelihood, which is an immediate consequence of the results of
 \cite{jagannath2017low,chen2017parisi}. 
Combining this with of the results of \cite{lesieur2017statistical} (and a standard approximation argument), we then obtain a sharp 
 estimate for the correlation between the MLE and $X$ for $\lambda>\lambda_c$ and find that it matches that  of the 
 Bayes-optimal estimator, confirming a prediction from \cite{gillin2000p}. 
 The fact that the MLE has non-trivial correlation down to the information-theoretic threshold $\lambda_c$ is surprising in this setting as it is not expected to be true for all prior distributions. See, e.g., \cite{gillin2001multispin}.

\subsection{Main results}
Let us begin by stating our first result regarding hypothesis testing.
Consider an observation $Y$  of a random tensor. 
Let $P_\lambda^N$ denote the law of \eqref{eq:spiked-tensor-def}.
The null hypothesis is then $P_0^N$ and the alternative $P_\lambda^N$.
Define for $t \in [0,1)$, 
\begin{equation}\label{eq:f-lambda-def}
f_{\lambda}(t) = \lambda^2 t^k +  \log(1-t) + t
\end{equation}
and
let
\begin{equation}\label{eq:lambda_c-def}
	\lambda_c = \sup \Big\{ \lambda \geq 0 \, \Big| \, \sup_{t \in [0,1)} f_{\lambda}(t) \leq 0 \Big\}.
\end{equation}
Our goal is to show that $P_\lambda^N$ and $P_0^N$ are mutually contiguous when $\lambda<\lambda_c$ 
and that for $\lambda>\lambda_c$ there is a sequence of tests $T_N$ which asymptotically distinguish these distributions.
More precisely, we obtain the following stronger result regarding the total variation distance between these hypotheses
which we state in the case $k$ even for simplicity. 
 
\begin{thm}\label{thm:main-thm}
For $k\geq6$ even, 
\[
\lim_{N\to\infty}d_{TV}(P_{0}^{N},P_{\lambda}^{N})
	=\begin{cases} 0 & \text{if} \quad \lambda <\lambda_c\\
	1 & \text{if} \quad \lambda >\lambda_c.
\end{cases}
\]
\end{thm}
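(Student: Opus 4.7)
The plan is to analyze the likelihood ratio
\[
L_N(Y) \;=\; \frac{dP_\lambda^N}{dP_0^N}(Y) \;=\; e^{-\lambda^2 N/2}\,Z_N(Y),\qquad Z_N(Y) \;=\; \int_{\bS^{N-1}} \exp\!\bigl(\lambda\sqrt{N}\,\langle x^{\otimes k},\,Y\rangle\bigr)\,d\sigma(x),
\]
and exploit the fact that, under $P_0^N$, $Z_N$ is exactly the partition function of the pure spherical $k$-spin model at inverse temperature $\lambda$, while $\E_{P_0^N} Z_N = e^{\lambda^2 N/2}$ is the annealed value. Using the identity $d_{TV}(P_0^N, P_\lambda^N) = 1 - \E_{P_0^N}\bigl[\min(L_N,1)\bigr]$, the theorem reduces to two statements: (a) $L_N \to 0$ in $P_0^N$-probability when $\lambda > \lambda_c$, and (b) $L_N \to 1$ in $L^1(P_0^N)$ when $\lambda < \lambda_c$.

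For (a), I would show that the quenched free energy $F(\lambda) := \lim_N \tfrac{1}{N}\E_{P_0}\log Z_N$ is strictly less than $\lambda^2/2$. Gaussian concentration (Tsirelson--Ibragimov--Sudakov applied to the $\lambda\sqrt N$-Lipschitz functional $\log Z_N$ of $W$) gives $\tfrac{1}{N}\log Z_N \to F(\lambda)$ almost surely under $P_0^N$, whence $\tfrac{1}{N}\log L_N \to F(\lambda) - \lambda^2/2 < 0$, so $L_N \to 0$ a.s. The strict inequality follows by inserting an overlap $t^* \in (0,1)$ with $f_\lambda(t^*) > 0$ (which exists for $\lambda > \lambda_c$) into a replica-symmetry-breaking variational bound for the spherical $k$-spin (Crisanti--Sommers/Parisi). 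The function $f_\lambda$ should emerge, up to reparameterization, as the relevant reduced functional, so that $\sup_t f_\lambda(t) > 0$ is equivalent to $F(\lambda) < \lambda^2/2$.

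For (b), a direct computation gives $\E_{P_0^N}[L_N^2] = \E_{X,X'}\bigl[e^{N\lambda^2 (X\cdot X')^k}\bigr]$, which by Laplace's method diverges whenever $\sup_r\bigl\{\lambda^2 r^k + \tfrac12\log(1-r^2)\bigr\} > 0$. Since this happens strictly below $\lambda_c$, the naive second moment is insufficient, and a \emph{truncated} second moment is needed. My plan is to replace $L_N$ by
\[
\widetilde L_N \;=\; e^{-\lambda^2 N/2}\int_{A(Y)} \exp\bigl(\lambda\sqrt{N}\,\langle x^{\otimes k}, Y\rangle\bigr)\,d\sigma(x)
\]
for a favorable region $A(Y)\subset\bS^{N-1}$ that excludes the atypical configurations (e.g.\ those with anomalously large local energy, or large overlap with the posterior mode) responsible for the divergence. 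A large-deviation analysis of the overlap $R_{12} = X\cdot X'$ restricted to $A$, governed by the rate function $f_\lambda$ rather than $\tfrac12\log(1-r^2)$, should yield $\E\widetilde L_N \to 1$ and $\E\widetilde L_N^2 \to 1$, so $\widetilde L_N \to 1$ in $L^2$; checking that $\E_{P_0^N}|L_N - \widetilde L_N| \to 0$ then transfers this to $L_N \to 1$ in $L^1(P_0^N)$ and gives $d_{TV} \to 0$.

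The principal obstacle will be step (b): identifying the truncation $A(Y)$ and carrying out the sharp LDP that produces $f_\lambda$ as the effective rate function in place of the uniform-sphere rate. The appearance of $\log(1-t) + t$ rather than $\tfrac12\log(1-t^2)$ strongly suggests a \emph{quenched} rather than annealed slice computation — essentially a Franz--Parisi-type potential at fixed overlap — which is where the authors' claim of a large-deviations replacement for the cavity method enters. I expect the restriction $k\geq 6$ even to be invoked in precisely this step, to guarantee sufficient concavity and separation of the critical points of $f_\lambda$ so that the truncated first and second moments can be simultaneously controlled, and so that the contribution of the excluded region vanishes in $L^1$ under $P_0^N$.
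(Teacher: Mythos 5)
Your step (a) is sound and is essentially what the paper does: Gaussian concentration of the $\lambda\sqrt N$-Lipschitz functional $\log Z_N$ together with Talagrand's theorem that $\lim_N \E F_N < \lambda^2/2$ precisely when $\lambda>\lambda_c$ forces $L_N\to 0$ under $P_0$. The genuine gap is in step (b). A truncated/conditional second moment of the likelihood ratio is exactly the route of \cite{montanari2015limitation,perry2016statistical}, and it is only known to give thresholds that become sharp in the limit $k\to\infty$ after $N$; for finite $k$ there is no identified truncation set $A(Y)$ for which $\E \widetilde L_N\to 1$ and $\E \widetilde L_N^2\to 1$ hold all the way up to $\lambda_c$. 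The structural obstruction is the one you half-notice yourself: any second-moment computation produces the \emph{annealed} spherical entropy $\tfrac12\log(1-t^2)$ at overlap $t$, while the correct rate involves $\log(1-t)+t=\tfrac12\log(1-t^2)+\bigl(t-\arctanh(t)\bigr)<\tfrac12\log(1-t^2)$ for $t\in(0,1)$, so the annealed restricted computation still overshoots; asserting that the large-deviation analysis ``restricted to $A$ is governed by $f_\lambda$'' is precisely the statement that would have to be proved, and no mechanism for converting the annealed entropy into the replica-symmetric one via conditioning is provided (or known). As written, (b) is a plan whose essential step is missing, and it is the step the paper's method is specifically designed to avoid.

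For contrast, the paper never takes a second moment of $L_N$. It controls the fluctuations of $\log L_N = N\bigl(F_N(\lambda)-\tfrac{\lambda^2}{2}\bigr)$ directly at order-one scale: Theorem~\ref{thm:conv-means} gives $|\E F_N-\lambda^2/2|\le CN^{-k/4+\eps}$ by a Gronwall argument in $\lambda$, using Gaussian integration by parts to write the derivative as $\lambda\,\E\langle R_{12}^k\rangle$ and bounding the overlap through the constrained two-replica free energy $F_{2,N}(u,1/N)$, for which the Guerra--Talagrand/Panchenko--Talagrand interpolation yields the locally quadratic bound $\E F_{2,N}(u,1/N;\lambda)\le\lambda^2-cu^2$ (Corollary~\ref{cor:parisi}); Theorem~\ref{thm:variance-bound} bounds $\Var(F_N)\le CN^{-k/4-1+\eps}$ via the Gaussian Poincar\'e inequality and the same overlap estimate; Chebyshev then gives $N(F_N-\lambda^2/2)\to 0$ in probability, hence $L_N\to1$ in probability, which suffices in the identity $d_{TV}=\int_0^1 P_0(L\le s)\,ds$. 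This is a quenched, Franz--Parisi-type control of the overlap, as you guessed, but it enters through the fluctuations of $F_N$ rather than through a truncated second moment. It also explains the hypothesis $k\ge6$ even: evenness is required for the two-replica Parisi-type upper bound (and the symmetrization in $|R_{12}|$), and $k\ge6$ is required so that the rate $N^{-k/4}$ beats the factor $N$ in $\log L_N$ (the exponent $(k-4)/4$ in Theorem~\ref{thm:FE-decay} must be positive) --- not any concavity or separation property of the critical points of $f_\lambda$.
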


The preceding result shows us that the transition for hypothesis testing occurs at $\lambda_c$. 
Let us now turn to the corresponding results regarding maximum likelihood estimation.

It is straightforward to show that maximizing the log-likelihood is equivalent to {maximizing $( Y,x^{\tensor k})$ over the sphere, $x \in \mathbb S^{N-1}$.
The maximum likelihood estimator (MLE) $\what{x}^{\rm ML}_{\lambda}$ is then defined as\protect\footnotemark

\begin{equation}
\what{x}^{\rm ML}_{\lambda} = \argmax_{x\in\mathbb S^{N-1}} ( Y,x^{\tensor k}).
\end{equation}
}
\footnotetext{
As shown in Proposition~\ref{prop:unique_ML}, 
$x \mapsto ( x^{\otimes k}, Y )$ admits almost surely a unique maximizer over $\mathbb{S}^{N-1}$ if $k$ is odd, and two maximizers $x^*$ and $-x^*$ if $k$ is even.
In the case of even $k$, $\what{x}^{\rm ML}_{\lambda}$ is simply picked uniformly at random among $\{-x^*,x^*\}$.
}
Our second result is that the preceding transition is also the transition for which maximum likelihood
estimation yields an estimator which achieves positive correlation with $X$. 

Let $q_*(\lambda)$ be defined by 
\begin{equation}\label{eq:q-star-def}
q_*(\lambda) = \begin{cases}
	0 & \text{if} \quad \lambda <\lambda_c\\
	\argmax_{t\in[0,1)} f_\lambda(t) & \text{if} \quad \lambda>\lambda_c.
\end{cases}
\end{equation}
As shown in Lemma~\ref{lem:gauss_scalar}, the function $f_{\lambda}$ admits a unique positive maximizer on $[0,1)$ when $\lambda > \lambda_c$, so that this is well-defined. 
Let $z_k$ denote the unique zero on $(0,+\infty)$ of
\begin{equation}\label{eq:def_varphi}
	\varphi_k(z) = \frac{1+z}{z^2} \log(1+z) - \frac{1}{z} - \frac{1}{k}.
\end{equation}
Finally, let 
\begin{equation}\label{eq:def_GS}
{\rm GS}_k = \frac{\sqrt k}{\sqrt{1+z_k}} \left(1+\frac{z_k}{k}\right).
\end{equation}

We then have the following.
\begin{thm}\label{thm:max_likelihood}
	Let $\lambda \geq 0$ and $k\geq 3$.
	The following limit holds almost surely
	\begin{equation}\label{eq:lim_max_likelihood}
	\lim_{N \to \infty}
	\frac{1}{\sqrt N} \max_{x \in \mathbb S^{N-1}} \big( x^{\otimes k}, Y \big)
	\ = \
	\begin{cases}
		{\rm GS}_k & \text{if} \ \ \lambda \leq \lambda_c \\
		\displaystyle\sqrt{k}\frac{1 + \lambda^2 q_*(\lambda)^{k-1}}{\sqrt{1 + \lambda^2 k q_*(\lambda)^{k-1}}} & \text{if} \ \ \lambda > \lambda_c.
	\end{cases}
	\end{equation}
Furthermore, we have that {for $\lambda\neq \lambda_c$}
	\begin{equation}\label{eq:correlation}
			\lim_{N\to\infty} \Big| \big( \what{x}_{\lambda}^{\rm ML}, X \big) \Big|	
			 = \sqrt{q_*(\lambda)}.
	\end{equation}
\end{thm}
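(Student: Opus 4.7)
The value statement \eqref{eq:lim_max_likelihood} is, as the authors note, an immediate consequence of the spherical Parisi-type ground-state formulas of \cite{jagannath2017low,chen2017parisi} applied to the Hamiltonian $x\mapsto(x^{\otimes k},Y)$; my plan therefore focuses on deducing the correlation statement \eqref{eq:correlation} from \eqref{eq:lim_max_likelihood} via a convex-envelope argument in the signal-to-noise ratio $\lambda$.

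The starting point is that
\[
M_N(\lambda) \defeq \frac{1}{\sqrt N}\,\max_{x\in\mathbb S^{N-1}}(x^{\otimes k},Y)
\;=\; \max_{x\in \mathbb S^{N-1}}\Big[\,\lambda\,(x,X)^{k} + \tfrac{1}{\sqrt N}(x^{\otimes k},W)\,\Big]
\]
is convex in $\lambda$ as a pointwise supremum of affine functions of $\lambda$ (the noise term does not depend on $\lambda$). By Danskin's theorem for subdifferentials, any maximizer $\what x^{\rm ML}_\lambda$ satisfies $(\what x^{\rm ML}_\lambda, X)^{k}\in\partial M_N(\lambda)$, where $\partial$ denotes the convex subdifferential in $\lambda$.

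Next, I would pass this inclusion to the limit. By \eqref{eq:lim_max_likelihood} there is a full-measure event on which $M_N(\lambda)\to M_\infty(\lambda)$ simultaneously for every $\lambda$ in a fixed countable dense subset of $(0,\infty)$. A classical fact from convex analysis (Rockafellar) then gives that on this event, at every point where $M_\infty$ is differentiable, any selection of subgradients $g_N\in\partial M_N(\lambda)$ converges to $M_\infty'(\lambda)$. Consequently, at every $\lambda\neq\lambda_c$,
\[
(\what x^{\rm ML}_\lambda, X)^{k}\;\xrightarrow[N\to\infty]{}\;M_\infty'(\lambda)\qquad\text{a.s.}
\]

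It remains to identify $M_\infty'(\lambda)$. For $\lambda<\lambda_c$, $M_\infty\equiv{\rm GS}_k$ is constant, so $M_\infty'=0=q_*(\lambda)^{k/2}$. For $\lambda>\lambda_c$, the critical-point equation $f_\lambda'(q_*)=0$ reads
\[
k\lambda^{2}q_*^{\,k-1}(1-q_*)=q_*,
\]
and substituting this identity into \eqref{eq:lim_max_likelihood} collapses the formula for $M_\infty$ to the compact form $(k-(k-1)q_*)/\sqrt{k(1-q_*)}$. A short implicit differentiation with respect to $\lambda$ (the factor $(k-1)q_*-(k-2)$ cancels between $dM_\infty/dq_*$ and $d\lambda/dq_*$) then produces the clean identity $M_\infty'(\lambda)=q_*(\lambda)^{k/2}$, whence $|(\what x^{\rm ML}_\lambda,X)|\to\sqrt{q_*(\lambda)}$ (for odd $k$ the inner product itself, being the $k$-th root of a nonnegative limit, tends to $+\sqrt{q_*(\lambda)}$). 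The step I expect to require the most care is organizing null sets so that the pointwise-to-subgradient convex-analysis upgrade holds simultaneously on a single full-measure event at the given $\lambda\neq\lambda_c$; once this is in hand, the differentiation of $M_\infty$ is purely mechanical.
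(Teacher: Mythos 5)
There is a genuine gap, and it is located in your very first sentence. The limit \eqref{eq:lim_max_likelihood} is \emph{not} an immediate consequence of the ground-state formulas of \cite{jagannath2017low,chen2017parisi} applied to $x\mapsto (x^{\otimes k},Y)$: those results cover centered, rotationally invariant mixed $p$-spin Hamiltonians (at most with a linear external field $h\sqrt N\sum_i x_i$), whereas here the relevant process is $H_\lambda(x)=\sqrt N (W,x^{\otimes k})+\lambda N(x,X)^k$, whose deterministic degree-$k$ spike breaks that structure; they only give the value ${\rm GS}_k$ at $\lambda=0$. Establishing \eqref{eq:lim_max_likelihood} is in fact the bulk of the paper's work: (i) a lower bound obtained by restricting to the band $\{(x,X)=m\}$, where conditionally one has a mixed $p$-spin model with mixture $\xi_m$ so that Theorem~\ref{thm:GS} gives $E_\lambda(m)=\lambda m^k+\cG(\xi_m,0)$, evaluated explicitly at $m=\sqrt{q_s(\lambda)}$ where the simple formula $\sqrt{\xi_m'(1)}$ is valid (Lemma~\ref{lem:up_qs}); (ii) a matching \emph{upper} bound which is not obtained by solving $\sup_m E_\lambda(m)$ (the variational problem $\cG(\xi_m,0)$ is not evaluated for small $m$) but by information-theoretic optimality: the envelope theorem gives $\partial_\lambda \E\big[\tfrac1N\max H_\lambda\big]=\E\big[(\what x^{\rm ML}_\lambda,X)^k\big]$, which is bounded by $q_*(\lambda)^{k/2}$ in the limit via \prettyref{cor:upper_IT} (resting on \prettyref{prop:IT}, i.e.\ \cite{lesieur2017statistical,barbier2017stochastic} plus the Gaussian-approximation of the uniform prior), and one then integrates in $\lambda$ with reverse Fatou; (iii) the two bounds are matched through Lemma~\ref{lem:key_identities}; and (iv) the regime $\lambda\le\lambda_c$ is handled by convexity/monotonicity of $\widebar M_N$ in $\lambda$, squeezing between $\lambda=0$ and $\lambda\to\lambda_c^+$, with almost sure convergence from Borell's inequality and Borel--Cantelli. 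Note also the resulting difference in logical structure: in the paper the Bayes-optimality bound on the correlation is an \emph{input} used to prove the value \eqref{eq:lim_max_likelihood}, and only then is the exact correlation deduced from the value; your plan assumes the value for free and would thus omit the essential information-theoretic ingredient entirely.

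The second half of your proposal, deducing \eqref{eq:correlation} from \eqref{eq:lim_max_likelihood}, is essentially the paper's own argument (Section~\ref{sec:proof_cor_max_likelihood}): convexity of $M_N$ in $\lambda$, the fact that $(\what x^{\rm ML}_\lambda,X)^k$ is a derivative/subgradient of $M_N$, a sandwich by difference quotients, differentiability of the limit off $\lambda_c$, and the identity $\ell'(\lambda)=q_*(\lambda)^{k/2}$ (which the paper obtains via the envelope theorem applied to $E_\lambda(\sqrt{q_s(\lambda)})$ in Lemma~\ref{lem:key_identities}, and you by a direct implicit differentiation --- equivalent computations). Your subgradient-convergence variant even avoids invoking almost sure uniqueness of the maximizer (Proposition~\ref{prop:unique_ML} plus Milgrom--Segal), which is a mild simplification. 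But this part alone does not constitute a proof of the theorem: without a genuine proof of \eqref{eq:lim_max_likelihood}, including its information-theoretic upper bound, the argument is incomplete.
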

As a consequence of \prettyref{cor:upper_IT}, the maximum likelihood estimator {achieves maximal correlation}. 
Unlike the case $k=2$, the transition in $q_*(\lambda)$ 
 is not continuous. See Figure~\ref{fig:1}.
  \begin{figure}[h!]
	\centering
	\includegraphics[width=.5\textwidth]{./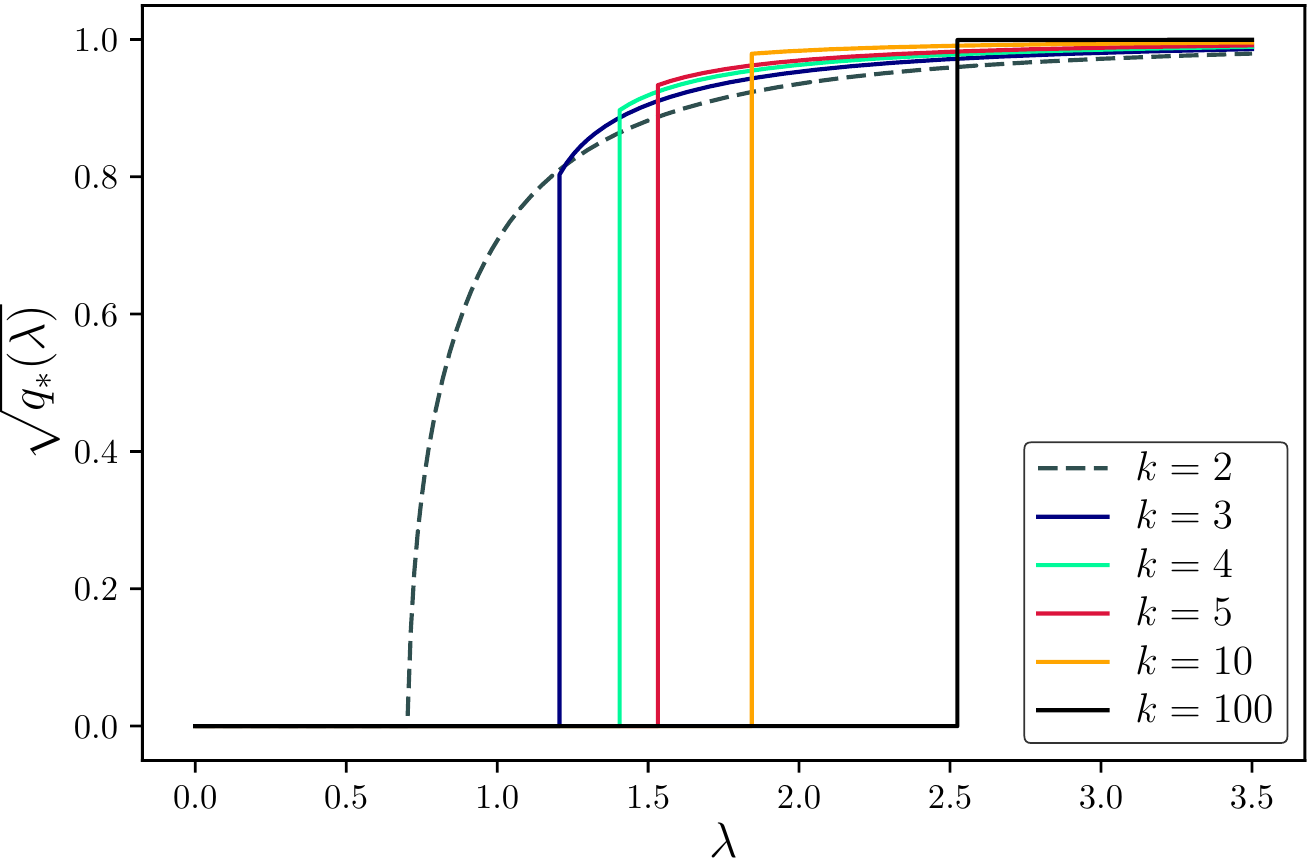}
	\caption{ Asymptotic correlation $\lim\limits_{N \to \infty} \frac{1}{N} | \langle \what{x}^{\rm ML}_{\lambda}, X \rangle | = \sqrt{q_*(\lambda)}$ as a function of the signal-to-noise ratio $\lambda$, for different values of $k$.
	}
	\label{fig:1}
\end{figure}

{Let us pause for a moment to comment on the importance of the assumption that the prior is 
a uniform spherical prior, as opposed to, e.g., a bounded i.i.d.\ prior as in earlier works. 
\begin{rem}\label{rem:LDP}
The argument for the main hypothesis testing result \prettyref{thm:main-thm} does not use in an essential way the spherical nature 
of the prior. The key idea is to show that a certain rate function has a locally quadratic lower bound near its zero, implying a CLT-type upper bound on fluctuations of the corresponding random variable. This estimate extends to rather broad families of problems 
which satisfy the ``positive replicon eigenvalue'' condition. See \cite{arous2018spectral} for more on this condition and how it implies such bounds. However, the sphericity assumption is used in an essential way in the estimation section. The point here is that the variational problem that arises in the spherical setting has a very explicit form. As a result, one can obtain exact expressions for the transition as well as the optimizers. Using these exact expressions, we can verify that the maximum likelihood estimator is information-theoretically optimal, which plays a vital role in our argument. One can obtain a corresponding variational representation for other priors, but the variational problem is in general substantially less transparent. 
\end{rem}}

\subsection*{Regarding the second threshold}
While the regime $\lambda_s<\lambda<\lambda_c$ and the expected transition at $\lambda_s$ {are} not relevant
for testing and estimation, {it has} a natural interpretation from the perspective of the landscape of the maximum likelihood.
In \cite{BMMN17,BBCR18}, this is explained in terms of the complexity. There is also an explanation in terms of the optimization of the maximum likelihood. We end this section with a brief discussion of this phase. 
Let $\lambda_s$ be given by 
 \begin{equation}\label{eq:lambda_s-def}
 \lambda_s = \sqrt{\frac{(k-1)^{k-1}}{k (k-2)^{k-2}}}.
 \end{equation}
Consider the constrained maximum likelihood,
\begin{equation}\label{eq:def_E}
	E_{\lambda}(m) = \lim_{N \to \infty} \frac{1}{N} \max_{x \in \mathbb{S}^{N-1}, \, (X,x) = m} \Big\{ \lambda N (X,x)^k + \sqrt N (W,x^{\tensor k}) \Big\}.
\end{equation}
This limit exists and is given by an explicit variational problem (see \eqref{eq:E-rs} below).  
For $\lambda>\lambda_s$, let ${\sqrt{q_s(\lambda)}}$ be the (unique) positive, strict local maximum of $f_\lambda$. By \prettyref{lem:gauss_scalar}, this is well-defined and satisfies $q_s(\lambda)=q_*(\lambda)$ for $\lambda>\lambda_c$.
In \cite{BBCR18}, it is argued by the replica method that $E_\lambda(m)$ has a local maximum at $\sqrt {q_s(\lambda)}$ for all
$\lambda>\lambda_s$. Establishing this rigorously is a key step in our proof of \prettyref{thm:max_likelihood}. In particular,
we prove the following.

\begin{prop}
For $\lambda>\lambda_s$, the function $E_\lambda$ has a strict local maximum at ${\sqrt{q_s(\lambda)}}$.
{
	It is a global maximum of $E_{\lambda}$ if and only if $\lambda \geq \lambda_c$. For $\lambda \leq \lambda_c$ the global minimum of $E_{\lambda}$ is achieved at $m=0$.
}
\end{prop}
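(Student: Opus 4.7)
The plan is to reduce the analysis of $E_\lambda$ to the scalar variational problem encoded by $f_\lambda$, using the explicit representation \eqref{eq:E-rs}. Conditioning the pure $k$-spin process $x\mapsto(W,x^{\otimes k})$ on the slice $\{x\in\bS^{N-1}:(X,x)=m\}$ and invoking the ground-state formulas of \cite{jagannath2017low,chen2017parisi} on the remaining $(N-2)$-sphere, one writes $E_\lambda(m)=\lambda m^k+\Phi_k(m)$ with $\Phi_k(m)$ a smooth, explicit function of $m\in[0,1)$ arising from a constrained Crisanti--Sommers variational problem. A direct differentiation shows that the first- and second-order critical-point conditions for $E_\lambda$ at $m>0$ are equivalent, via the substitution $t=m^2$, to the corresponding conditions for $f_\lambda$, so that positive strict local maxima of $f_\lambda$ correspond to positive strict local maxima of $E_\lambda$.

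Granted this correspondence, the local-maximum claim is immediate from \prettyref{lem:gauss_scalar}: for $\lambda>\lambda_s$, the function $f_\lambda$ admits $q_s(\lambda)>0$ as a strict local maximum, whence $E_\lambda$ admits $\sqrt{q_s(\lambda)}$ as one. To identify the global maximum, I would compare its two natural candidate values, $E_\lambda(0)={\rm GS}_k$ and $E_\lambda(\sqrt{q_s(\lambda)})$. A direct computation using \eqref{eq:def_varphi}--\eqref{eq:def_GS} together with the stationarity equation $f_\lambda'(q_s(\lambda))=0$ shows that the difference $E_\lambda(\sqrt{q_s(\lambda)})-{\rm GS}_k$ has the same sign as $f_\lambda(q_s(\lambda))$. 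By \prettyref{lem:gauss_scalar} and \eqref{eq:lambda_c-def}, the latter is strictly negative on $(\lambda_s,\lambda_c)$, vanishes at $\lambda=\lambda_c$, and is strictly positive for $\lambda>\lambda_c$. Combined with \prettyref{thm:max_likelihood}, which identifies $\sup_m E_\lambda(m)$ in closed form and records its discontinuity at $\lambda_c$, this pins down the global maximizer of $E_\lambda$ as $m=0$ for $\lambda\leq\lambda_c$ and as $\sqrt{q_*(\lambda)}=\sqrt{q_s(\lambda)}$ for $\lambda\geq\lambda_c$, as claimed.

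The principal obstacle lies in the intermediate strip $\lambda_s<\lambda<\lambda_c$, where the local maximum at $\sqrt{q_s(\lambda)}$ is not global and therefore cannot be extracted from \prettyref{thm:max_likelihood} alone. In this regime one must control $E_\lambda(m)$ directly in a neighborhood of $\sqrt{q_s(\lambda)}$. The matching upper bound would follow from a Guerra--Talagrand interpolation at the two-step replica-symmetric level, as in the constrained Parisi analysis of \cite{jagannath2017low}, while the lower bound would rely on a conditional second-moment or Kac--Rice computation on narrow overlap windows around $\sqrt{q_s(\lambda)}$, in the spirit of \cite{BMMN17,BBCR18}. A minor technical point is to verify that the Jacobian $dt/dm=2m$ of the substitution $t=m^2$ does not introduce a spurious critical point of $E_\lambda$ at $m=0$; this is precisely why the local-maximum assertion is restricted to $\lambda>\lambda_s$, the regime in which $q_s(\lambda)>0$.
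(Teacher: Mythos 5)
Your first two paragraphs are essentially the paper's own route: the representation \eqref{eq:E-rs} obtained by conditioning on the slice $\{(X,x)=m\}$ and invoking Theorem~\ref{thm:GS}, the identification of critical points of $m\mapsto E_\lambda(m)$ with those of $f_\lambda$ under $t=m^2$, the comparison of $E_\lambda(\sqrt{q_s(\lambda)})$ with ${\rm GS}_k$ (which the paper carries out via the envelope derivative $\partial_\lambda E_\lambda(\sqrt{q_s(\lambda)})=q_s(\lambda)^{k/2}$ together with the identity $\lambda_c^2 k q_s(\lambda_c)^{k-1}=z_k$ of Lemma~\ref{lem:x_k}), and the use of Theorem~\ref{thm:max_likelihood} --- whose proof does not use this proposition, so there is no circularity --- to bound $\sup_m E_\lambda(m)$ by $\lim\frac1N\max_{\bS^{N-1}}H_\lambda$ and settle the global statements. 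One imprecision: the asserted equivalence of first- and second-order conditions with those of $f_\lambda$ for \emph{all} $m>0$ is not justified, since $\cG(\xi_m,0)$ has no simple closed form for small $m$. What is true, and all that is needed, is that for $|m|\ge\sqrt{1-\frac{1}{k-1}}$ one has $\xi_m'(1)\ge\xi_m''(1)$, hence $E_\lambda(m)=\lambda m^k+\sqrt{k(1-m^2)}$ by \eqref{eq:E}, and then the computation \eqref{eq:der_phi0} shows that $m^2\mapsto E_\lambda(m)$ has the same monotonicity as $f_\lambda$ on $[\tfrac{k-2}{k-1},1)$; since $q_s(\lambda)>\tfrac{k-2}{k-1}$ by Lemma~\ref{lem:gauss_scalar}, the strict local maximum at $\sqrt{q_s(\lambda)}$ follows.

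The real problem is your last paragraph: the ``principal obstacle'' you identify does not exist, and the program you sketch to deal with it (Guerra--Talagrand interpolation plus a conditional second moment or Kac--Rice computation on narrow overlap windows) is both unnecessary and not carried out, so as written the central claim --- the strict local maximum for \emph{every} $\lambda>\lambda_s$, in particular in the strip $\lambda_s<\lambda<\lambda_c$ --- is left resting on an unexecuted probabilistic argument. The point you are missing is that \eqref{eq:E-rs} is a deterministic, exact formula for $E_\lambda(m)$, valid for every $\lambda\ge0$ and every $m$; once it is established (via Borell's inequality and Theorem~\ref{thm:GS}, as in the paper), there is no randomness left to control and nothing changes about it in the intermediate strip. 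The local maximum for $\lambda_s<\lambda<\lambda_c$ is proved by exactly the same differentiation as for $\lambda>\lambda_c$ (this is Lemma~\ref{lem:up_qs}); what distinguishes the strip is only that $E_\lambda(\sqrt{q_s(\lambda)})<{\rm GS}_k=E_\lambda(0)$, which you already obtain from the sign comparison with $f_\lambda(q_s(\lambda))$, so the local maximum fails to be global there --- no second-moment or complexity computation enters anywhere.
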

{\noindent The proof of this result immediate{ly} follows by combining \prettyref{lem:up_qs} {with Lemmas \ref{lem:gauss_scalar}--\ref{lem:x_k}} below.}

It is easy to verify (by direct differentiation) that the map $\lambda \mapsto E_\lambda({\sqrt{q_s(\lambda)}})$ is strictly increasing on $(\lambda_s, +\infty)$. 
We have also that $E_{\lambda_c}(\sqrt{q_s(\lambda_c)}) = {\rm GS}_k$ by \prettyref{lem:up_qs} and \prettyref{lem:x_k}, so we get that for $\lambda_s < \lambda < \lambda_c$ the strict local maximum at $\sqrt{q_s(\lambda)}$ has $E_{\lambda}(\sqrt{q_s(\lambda)})$ strictly less than the maximum likelihood.
In fact, \eqref{eq:E-rs} can be solved numerically, as it can be shown 
that one may reduce this variational problem, in the setting we consider here, to a two-parameter family of problems in three real variables. This is discussed in \prettyref{rem:rigorous} below. In particular, see Figure \ref{fig:2} for an illustration of these two transitions in the case $k=4$.
  \begin{figure}[h!]
	\centering
	\includegraphics[width=.5\textwidth]{./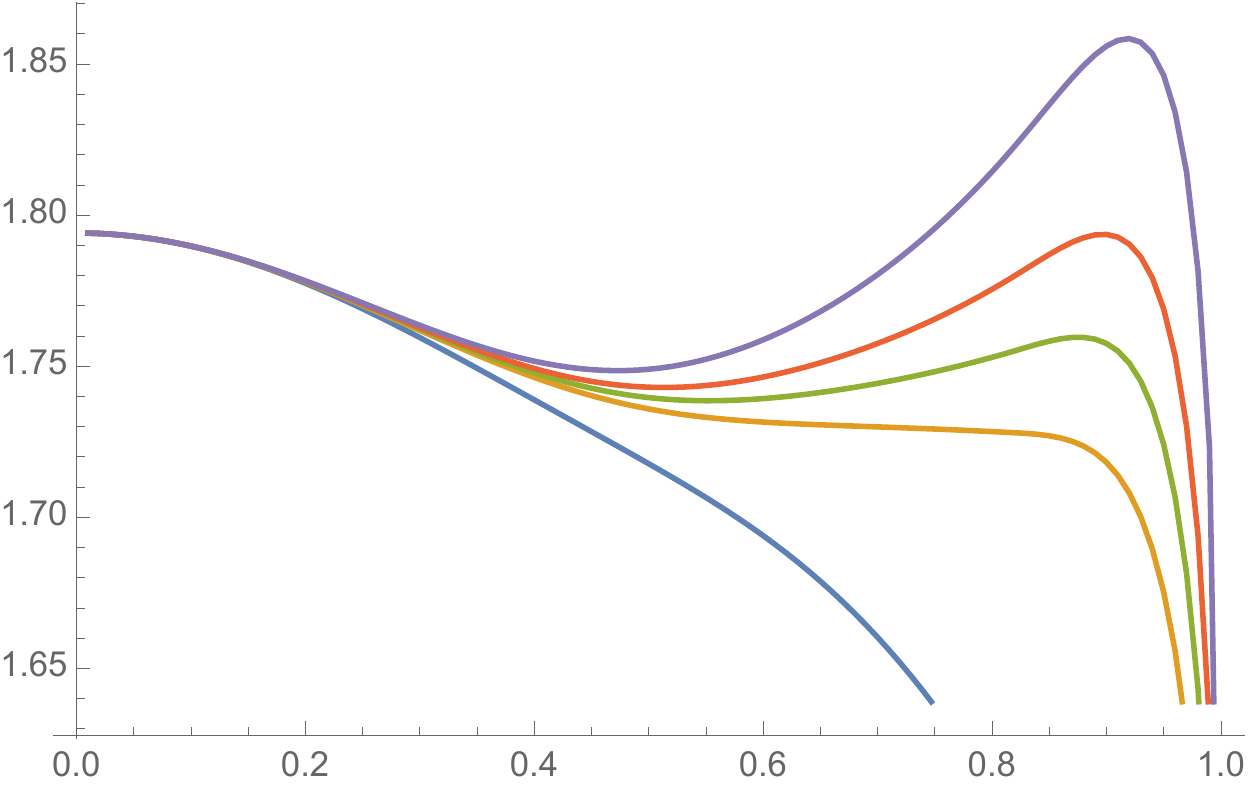}
	\caption{ Asymptotic constrained maximum likelihood $E_\lambda(m)$ for $p=4$ with $\lambda= 1, 1.299,1.35,1.405,1.5$. Here $\lambda_s\approx 1.299$ and $\lambda_c\approx 1.405$. For $\lambda<\lambda_s$, the function is (numerically) seen to be monotone. A secondary maximum occurs at the transition $\lambda=\lambda_s$. This local maximum is bounded away from $m=0$. Finally, at the information theoretic threshold $\lambda_c$, the maximum likelihood is now maximized at this second point.}
	\label{fig:2}
\end{figure}

Let us now compare this with the complexity based approach in \cite{BMMN17,BBCR18}. In \cite{BMMN17}, the authors computed
the expected number of local maxima of a fixed likelihood and correlation (called the annealed 0-complexity) on the exponential in $N$ scale. They showed that for $\lambda<\lambda_s$ this expectation scales like $O(e^{-cN})$ for correlations $q>0$, and $O(\exp(cN))$ for a suitable range of likelihoods when $q=0$, whereas for $\lambda_s<\lambda$, the show that at $q_s$, the logarithm of this quantity is $o(N)$.
Furthermore, they find that once $\lambda>\lambda_c$, this exponent is maximized at a value of likelihood that is larger than the value for correlation $q=0$. While this argument does not show that this behavior is typical, it was argued in \cite{BBCR18},
 via a novel (but non-rigorous) replica method, that the same result holds for the log-number of local maxima (called the quenched $0$-complexity) with high probability. In contrast, in this paper we bypass the analysis of critical points and instead obtain a similar picture by directly computing the almost sure limit of the constrained maximum likelihood, .

 \subsection*{Acknolwedgements}
 A.J. would like to thank G. Ben Arous for encouraging the preparation of this paper. 
 A.J. and L.M. would like to thank the organizers of the BIRS workshop ``Spin Glasses and Related Topics"
 where part of this research was conducted.
This work was conducted while A.J. was supported by NSF OISE-1604232 
and {P.L. was partially supported by the NSF Graduate Research Fellowship Program under grant DGE-1144152.}

\section{Proof of \prettyref{thm:main-thm} and connection to spin glasses}\label{sec:connection-to-sg}
In this section, we prove \prettyref{thm:main-thm}. In particular, we connect
the phase transition for the hypothesis testing problem to a
phase transition in a class of models from statistical physics, which is proved in the remaining sections.

Let us begin by explaining this connection. First note that the null hypothesis is a centered Gaussian distribution on the space of $k$-tensors in $\R^N$,
whereas the alternative corresponds to one with a random mean $\lambda \sqrt N X^{\tensor k}$. 
Thus by Gaussian change of density, the likelihood ratio, $dP_\lambda/dP_0$, satisfies
\[
L(Y) = \frac{dP_\lambda}{dP_0}(Y)  = \int\exp\left( \lambda \sqrt N( Y,x^{\tensor k}) - N\frac{\lambda^2}{2}\right) dx,
\]
where $dx$ denotes the uniform measure on $\mathbb S^{N-1}$.
Observe that the total variation distance satisfies
\begin{equation}\label{eq:total-identity}
d_{TV}(P_\lambda,P_0) =\E_{P_0}\left((1-L(Y))\indicator{\{L(Y)\leq 1\}}\right) = \int_0^1 P_0(L(Y) \leq s)ds.
\end{equation}
We will show that this probability tends to zero almost everywhere when $\lambda < \lambda_c$. 

Let us now make the following change of notation, motivated by statistical physics.
 { For $x\in \mathbb S^{N-1}$ and ${\lambda}\geq0$, define
\begin{equation}\label{eq:ham-def}
H(x)=\sqrt{N}(W,x^{\tensor k}), \quad Z({\lambda})=\int\exp(-{\lambda} H(x))\, dx.
\end{equation}}
We view $H$ as a function on $\mathbb S^{N-1}$, which is called the \emph{Hamiltonian}
of the \emph{spherical ${k}$-spin glass model} in the statistical physics literature \cite{crisanti1992sphericalp}.
The log-likelihood ratio
 then has an interpretation in terms of what is called a ``free energy"  in the statistical
 physics literature. More precisely, define the \emph{free energy at temperature ${\lambda}$}
 for the spherical ${k}$-spin model by
 \begin{equation}
 F_N({\lambda}) =\frac{1}{N}\log Z_N({\lambda})
 \end{equation}
  and observe that  under the null hypothesis, 
   \begin{equation} 
 \frac{1}{N}\log L(Y) = F_N(\lambda) - \frac{\lambda^2}{2}.
 \end{equation}
 The key conceptual step in our proof is to connect the phase transition for hypothesis testing
 to what is called the ``replica symmetry breaking" transition in statistical physics. 
 While it is not within the scope of this paper to provide a complete description of this 
 transition, we note that one expects this transition to be reflected in the limiting properties 
 of $F_N$: if $\lambda$ is small $F_N$ should fluctuate around $\lambda^2/2$,
 but for large $\lambda$ it should be much smaller than $\lambda^2/2$. A sharp transition 
 is expected to occur at $\lambda_c$. For an in-depth discussion of replica symmetry breaking transitions
 see \cite{mezard1987spin}. In the remainder of this section
 we reduce the proof of our main result to the proof that the phase transition
 for the fluctuations of $F_N$ does in fact occur at $\lambda_c$.  We then prove this phase transition exists in the next two sections.

Let us turn to this reduction. By \eqref{eq:total-identity} and the equivalence noted above,
\begin{equation}\label{eq:tv-to-fe}
	d_{TV}(P_{0},P_{\lambda})=\int_{0}^{1}P_0\left(F_N(\lambda)-\frac{\lambda^{2}}{2}<\frac{\log(x)}{N}\right)dx.
\end{equation}
We have the following theorem of Talagrand, which we state in a weak form
for the sake of exposition. {Here and in the following, unless otherwise specified $\mathbb P$ and $\mathbb E$ will always denote
integration with respect to the law of the Gaussian random tensor $W$.}

\begin{thm}[Talagrand \cite{TalSphPF06}]\label{thm:parisi2}
For every ${\lambda}>0$, 
$\E F_N({\lambda})$ is a convergent sequence. Furthermore, 
\[
	\lim_{N\to\infty} \E F_{N} \leq \frac{{\lambda}^{2}}{2},
\]
with equality if and only if ${\lambda}\leq\lambda_c$.
\end{thm}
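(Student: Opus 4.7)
The plan is to establish three claims: convergence of $\mathbb E F_N(\lambda)$, the Jensen upper bound $\lambda^2/2$, and the characterization of the equality case. The first two are essentially soft, while the sharpness of $\lambda_c$ in the equality statement is the crux of the argument.

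For the upper bound, Jensen's inequality applied to the concave logarithm, followed by Fubini, yields
\[
\mathbb E F_N(\lambda) \;\leq\; \frac{1}{N}\log \int_{\mathbb S^{N-1}} \mathbb E\bigl[e^{-\lambda H(x)}\bigr]\,dx \;=\; \frac{\lambda^2}{2},
\]
since for each $x \in \mathbb S^{N-1}$ the variable $H(x) = \sqrt N (W, x^{\otimes k})$ is centered Gaussian with variance $N \|x^{\otimes k}\|^2 = N$. For existence of the limit, I would invoke a Guerra--Toninelli style interpolation: comparing the free energy in dimension $N_1 + N_2$ with that of two independent copies in dimensions $N_1$ and $N_2$ gives approximate super-additivity of $N\,\mathbb E F_N(\lambda)$, so that convergence follows from Fekete's lemma.

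For the characterization of equality, the central tool is the Crisanti--Sommers / Parisi variational formula for the spherical $k$-spin free energy, established by Talagrand and extended by Chen. This formula expresses $\lim_N \mathbb E F_N(\lambda)$ as the infimum over probability measures $\mu$ on $[0,1]$ of a convex functional $\mathscr P_\lambda(\mu)$. A direct computation shows that evaluating $\mathscr P_\lambda$ at the atomic measure $\mu = \delta_0$ recovers $\lambda^2/2$, consistent with the Jensen bound. Equality in the theorem is therefore equivalent to $\delta_0$ being a minimizer of $\mathscr P_\lambda$.

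The main obstacle is to identify the set of $\lambda$ for which $\delta_0$ minimizes $\mathscr P_\lambda$ with the set $\{\lambda \leq \lambda_c\}$ defined through $f_\lambda$ in \eqref{eq:lambda_c-def}. For $\lambda \leq \lambda_c$ I would run Guerra's RSB interpolation against an arbitrary trial measure and control the derivative of the interpolated free energy along the path using the bound $\sup_t f_\lambda(t) \leq 0$; this forces the RS value $\lambda^2/2$ to be tight. Conversely, for $\lambda > \lambda_c$ a $1$RSB trial measure with a single atom near a positive maximizer of $f_\lambda$ produces a bound whose discrepancy from $\lambda^2/2$ is proportional to $\sup_t f_\lambda(t) > 0$, giving a strict improvement. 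Verifying that $f_\lambda$ is precisely the function that arises in the Crisanti--Sommers stability condition for $\delta_0$---so that the replica symmetric phase is exactly $\{\lambda \leq \lambda_c\}$---is the principal technical step, and it is where the specific form of $\xi(q) = q^k$ and the spherical constraint enter through the logarithmic term in \eqref{eq:f-lambda-def}.
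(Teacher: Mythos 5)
The paper does not actually prove this statement: it is imported verbatim (in a ``weak form'') from Talagrand \cite{TalSphPF06}, so the only meaningful comparison is with the argument behind that citation, which is indeed the route you sketch --- the Crisanti--Sommers/Parisi formula for the pure spherical $k$-spin model together with an analysis of its variational problem. Your Jensen bound is fine. Two caveats on the soft parts: superadditivity via Fekete is not immediate here, since $\mathbb S^{N_1+N_2-1}$ is not a product of spheres, so the Guerra--Toninelli argument does not transfer verbatim from the hypercube (convergence instead comes with the Parisi formula itself); and the strict-inequality direction for $\lambda>\lambda_c$ (a one-atom 1RSB perturbation near the maximizer of $f_\lambda$, with the 1RSB parameter slightly below $1$) is correct in spirit, the relevant derivative being exactly the quantity $\phi_\lambda$ computed in Lemma~\ref{lem:parisi-2}, whose sign matches that of $f_\lambda$ by Lemma~\ref{lem:gauss_scalar}.

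The genuine gap is in the equality characterization, which is the entire content of the theorem. For $\lambda\le\lambda_c$ you propose to ``run Guerra's RSB interpolation against an arbitrary trial measure and control the derivative\dots this forces the RS value to be tight.'' That mechanism points the wrong way: Guerra-type interpolation only yields upper bounds $\lim_N\E F_N\le \mathscr P_\lambda(\mu)$, and the replica-symmetric choice $\mu=\delta_0$ already reproduces $\lambda^2/2$, which you have from Jensen anyway. Tightness requires the matching lower bound, i.e.\ the full identity $\lim_N\E F_N=\inf_\mu\mathscr P_\lambda(\mu)$ (Talagrand/Chen), and then a purely deterministic fact about the functional: $\inf_\mu\mathscr P_\lambda(\mu)=\lambda^2/2$ if and only if $\sup_{t\in[0,1)}f_\lambda(t)\le 0$. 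You correctly flag this identification as the principal technical step, but you give no argument for it, and local stability of $\delta_0$ (nonnegativity of directional derivatives toward one-atom perturbations, which is where $f_\lambda$, equivalently $\phi_\lambda$, enters) does not by itself give global minimality; one needs either convexity of the Crisanti--Sommers functional or the reduction of the pure spherical model to at most 1RSB Parisi measures. Note also that no soft tightness argument can substitute for the lower bound: the naive second-moment exponent is $\lambda^2t^k+\tfrac12\log(1-t^2)$, which strictly dominates $f_\lambda(t)=\lambda^2t^k+\log(1-t)+t$, so second-moment-style reasoning fails strictly before $\lambda_c$ --- this is precisely why the sharp threshold requires the variational formula.
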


With this in hand, it suffices to show the following. 
\begin{thm}\label{thm:FE-decay}
For  $k\geq 4$ even, $\eps>0$ and ${\lambda}<\lambda_{c}$,  there is a {constant} $C>0$ such that for every $N\geq 1$ and $x>0$,
\[
\prob\left( \left| F_N({\lambda})-\frac{{\lambda}^{2}}{2} \right| > \frac{x}{N}\right)\leq C\frac{1}{x^2N^{{\frac{k-4}{4}} -\eps}}.
\]
\end{thm}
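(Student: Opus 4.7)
The plan is to establish \prettyref{thm:FE-decay} via three steps: a Gaussian interpolation identity controlling the bias $\lambda^2/2 - \E F_N$, a Gaussian Poincar\'e inequality controlling $\Var(F_N)$, and a large-deviation estimate for the overlap moments $\E\gibbs{R_{12}^k}_\mu$ under the Gibbs measure $\rho_\mu \propto e^{-\mu H}$, uniformly in $\mu \in [0,\lambda]$. Here $R_{12} = (x^1, x^2)$ denotes the overlap of two independent replicas.

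\emph{Step 1 (Interpolation).} Since $\E Z_N(\lambda) = e^{N\lambda^2/2}$ by direct Gaussian calculation, Jensen's inequality gives $\E F_N \le \lambda^2/2$. Using Gaussian integration by parts in the tensor $W$, a short computation shows $\partial_\mu \E F_N(\mu) = \mu\bigl[1 - \E\gibbs{R_{12}^k}_\mu\bigr]$, and integrating from $0$ to $\lambda$ yields
\[
\frac{\lambda^2}{2} - \E F_N(\lambda) \;=\; \int_0^\lambda \mu\, \E\gibbs{R_{12}^k}_\mu\, d\mu.
\]

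\emph{Step 2 (Variance).} A direct computation gives $\|\nabla_W F_N\|^2 = (\lambda^2/N)\gibbs{R_{12}^k}_\lambda$, so the Gaussian Poincar\'e inequality applied to $F_N$ viewed as a function of the i.i.d.\ Gaussian entries of $W$ yields
\[
\Var(F_N) \;\le\; \E\|\nabla_W F_N\|^2 \;=\; \frac{\lambda^2}{N}\, \E\gibbs{R_{12}^k}_\lambda.
\]
Hence both the bias and the variance of $F_N$ are reduced to bounding $\E\gibbs{R_{12}^k}_\mu$.

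\emph{Step 3 (Overlap moment bound).} The technical core is to show that for every $\mu \in [0,\lambda]$ with $\lambda < \lambda_c$,
\[
\E\gibbs{R_{12}^k}_\mu \;\le\; C\, N^{-1 - (k-4)/4 + \epsilon}.
\]
I would derive this from a large-deviation bound on the distribution of $R_{12}$ under the (averaged) replicated Gibbs measure, governed by the rate function $f_\mu(t) = \mu^2 t^k + \log(1-t) + t$. By the very definition of $\lambda_c$, $\sup_{t\in[0,1)} f_\mu < 0$ for $\mu < \lambda_c$. Crucially, $f_\mu$ admits a locally quadratic lower bound near its zero at $t=0$: $f_\mu(t) \le -c t^2$ for $|t|$ small. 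The global strict negativity controls the contribution to the overlap density from atypical overlaps $|t| > N^{-1/2+\eta}$, while the locally quadratic behavior gives Gaussian-type concentration of $R_{12}$ around $0$ at scale $N^{-1/2}$. The ``positive replicon eigenvalue'' condition alluded to in \prettyref{rem:LDP} is precisely this local quadratic lower bound, and plays the role that the cavity method did in earlier works.

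\emph{Conclusion.} Combining Steps 1--3 gives bias $|\E F_N - \lambda^2/2| = O(N^{-1-(k-4)/4+\epsilon})$ and variance $\Var(F_N) = O(N^{-2-(k-4)/4+\epsilon})$. By Chebyshev's inequality,
\[
\prob\!\bigl(|F_N - \lambda^2/2| > x/N\bigr) \;\le\; \prob\!\bigl(|F_N - \E F_N| > x/(2N)\bigr) + \indicator{|\E F_N - \lambda^2/2| > x/(2N)},
\]
and both terms are dominated by $C/(x^2 N^{(k-4)/4 - \epsilon})$ once $N$ is large.

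\emph{Main obstacle.} Step~3 is the heart of the proof. A plain second moment argument for $Z_N$ succeeds only for a range strictly smaller than $\lambda < \lambda_c$, since its natural rate function is $\mu^2 t^k + \tfrac{1}{2}\log(1-t^2)$ rather than $f_\mu$. Extending the overlap moment estimate all the way up to the sharp threshold $\lambda_c$ requires working directly with $f_\mu$ and rigorously exploiting its locally quadratic lower bound at $t=0$; this is the essential content of the large-deviation approach advertised in the introduction and the main technical novelty relative to the cavity-based arguments used in the bounded i.i.d.\ prior setting.
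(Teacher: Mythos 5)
Your skeleton coincides with the paper's: Chebyshev exactly as in \eqref{eq:chebyshev}, the Gaussian integration-by-parts identity $\frac{d}{d\lambda}\big(\frac{\lambda^2}{2}-\E F_N\big)=\lambda\,\E\gibbs{R_{12}^k}$ as the starting point of Theorem~\ref{thm:conv-means}, and the Poincar\'e bound $\Var(F_N)\le \frac{\lambda^2}{N}\E\gibbs{R_{12}^k}$ which is precisely the proof of Theorem~\ref{thm:variance-bound}; your exponent bookkeeping for $k\ge 4$ is also consistent with the stated rate.

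The genuine gap is Step~3, which is where essentially all of the content of the theorem lies, and your plan for it is both unproved and, as structured, circular. The only handle on the law of $R_{12}$ under the quenched replicated Gibbs measure is the identity $\frac{1}{N}\log\pi^{\tensor 2}\big(|R_{12}|\approx u\big)=F_{2,N}(u,\frac1N;\lambda)-2F_N(\lambda)$, and replacing $F_N$ by $\lambda^2/2$ costs exactly $2\psi(\lambda)=2\big(\frac{\lambda^2}{2}-\E F_N\big)$, i.e.\ the very quantity you want to control in Step~1 (see \eqref{eq:ldp-vs-conv}). Hence a uniform-in-$\mu$ bound on $\E\gibbs{R_{12}^k}_\mu$ cannot be established first and then integrated; the paper instead obtains the self-referential differential inequality \eqref{eq:gronwall-step}, $\psi'(\lambda)\le C\psi(\lambda)+CN^{-k/4+k\delta/2}$, and closes it with Gronwall. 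Moreover, asserting that the overlap LDP is ``governed by $f_\mu$'' is not an argument: the rigorous input is the two-replica Guerra--Talagrand (Parisi-type) upper bound on $\E F_{2,N}(u,\frac1N)$ from Panchenko--Talagrand, with the functional $\cP(u,m,\Lambda)$. The quadratic decay near $u=0$ comes from a Hessian computation in the pair $(u,\Lambda)$ — the coupling parameter $\Lambda$ is essential and this is not simply the Taylor expansion of $f_\mu$ at $0$ (Lemma~\ref{lem:parisi-1}) — while the decay for $u$ bounded away from $0$ comes from $\frac{d}{dm}\cP(u,m,0)\big|_{m=1}=\phi_\lambda(u)<0$ for $\lambda<\lambda_c$, which is where $f_\lambda$ enters only through Lemma~\ref{lem:gauss_scalar} (Lemma~\ref{lem:parisi-2}, Corollary~\ref{cor:parisi}). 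One also needs Gaussian concentration of $F_{2,N}$ and $F_N$ to pass between quenched and averaged quantities, together with a union bound over a $1/N$-net in $u$, to convert the rate-function bound into the moment bound on $\E\gibbs{R_{12}^k}$. Without these ingredients and the Gronwall closure, Step~3 restates the hardest part of the theorem rather than proving it.
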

\begin{proof}
The proof of this theorem will constitute the next two sections.
Let us begin by making the following elementary observations, which will reduce the 
theorem to certain fluctuation theorems. To this end, observe that by 
 Chebyshev's inequality,
 \begin{equation}\label{eq:chebyshev}
\prob\left(\left|F_N({\lambda})-\frac{{\lambda}^{2}}{2}\right|>\eta/N\right)\leq\frac{N^{2}}{\eta^{2}}\left(\Var(F_{N})+\left(\E F_{N}-\frac{{\lambda}^{2}}{2}\right)^{2}\right).
\end{equation}
The key point in the following will be to quantify the rate of convergence in Talagrand's theorem
when $\lambda<\lambda_c$. This rate of convergence will also
allow us to control the variance of $F_{N}$. 
More precisely, in the subsequent sections we will prove  the following two theorems.
\begin{thm}\label{thm:conv-means}
{Fix $k\geq 4 $ even} and ${\lambda}<\lambda_{c}$. For any $\epsilon>0$ there is a $C({\lambda},\epsilon)>0$
such that for $N\geq 1$
\[
\left| \E F_{N}-\frac{{\lambda}^{2}}{2}\right|\leq CN^{-{k/4}+\epsilon}.
\]
\end{thm}

\begin{thm}\label{thm:variance-bound}
	{Fix $k\geq 4$ even.} For ${\lambda}<\lambda_c$ and $\eps>0$, for $N\geq 1$  
\[
\Var(F_N)\leq\frac{C({\lambda})}{N^{{k/4}+1-\eps}}.
\]
\end{thm}
\noindent The desired result then follows upon combining \prettyref{thm:conv-means} and \prettyref{thm:variance-bound} with \eqref{eq:chebyshev}.
\end{proof}

We can now prove the main theorem.
\begin{proof}[\textbf{\emph{Proof of \prettyref{thm:main-thm}}}]
Suppose first that $\lambda <\lambda_c$. Then by \prettyref{thm:FE-decay}
combined with \eqref{eq:tv-to-fe}, the total variation distance vanishes for $k\geq 6$ even. 

Suppose now that $\lambda> \lambda_c$. Note
\[
\prob( F_N \leq \E F_N -\epsilon) \leq \exp (-N\epsilon^2/2)
\]
by Gaussian concentration (see, e.g., \cite[Theorem~5.6]{boucheron2013concentration}). By Jensen's inequality $\E F_N(\lambda) \leq \lambda^2/2$, and by Talagrand's 
theorem, we know that $$\E F_N\to F\neq {\lambda}^2/2,$$ so for some $\eps>0$,
 {
	 $$\lim_{N\to\infty}\mathbb{P}(F_N(\lambda)-\frac{\lambda^2}{2}<-\epsilon)=1.$$ 
 }
The desired result then follows by using this to lower bound the right side of \eqref{eq:tv-to-fe}.
\end{proof}

\section{Rate of convergence of the mean and Decay of variance.}
In this section, we prove \prettyref{thm:conv-means}.
In the following we will make frequent use of the measure 
\[
\pi_{{\lambda}}(dx)\propto\exp(-{\lambda} H(x))\, dx
\]
where $H$ is as in \eqref{eq:ham-def} and $0<{\lambda}$.  We call this the \emph{Gibbs measure}, which we normalize to be a probability
measure. Observe that this normalization constant is given exactly
by $\log Z({\lambda})$. Here and in the remaining sections, we will let $\left\langle \cdot\right\rangle $ denote
expectation with respect to the (random) measure $\pi_{{\lambda}}$. We
will suppress the dependence on ${\lambda}$ whenever it is unambiguous
as it will always be fixed.
Throughout this section, $\lambda$ will always be fixed and less than $\lambda_{c}$.
It will also be useful to define the quantity
\[
	F_{2,N}(u,\eta;{{\lambda}})=\frac{1}{N}\log\int\int_{\abs{\abs{(x,y)}-u}<\eta}\exp(-{\lambda} H(x)-{\lambda} H(y))\, dx\, dy,
\]
where $(x,y)$ denotes the Euclidean inner product. Evidently this is related to the large deviations rate function for the event
$(x,y) \approx u$. To simplify notation, for $X^1,X^2\sim \pi_{{\lambda}}$, we let
\[
R_{12} = (X^1,X^2).
\]

The starting point for our analysis is the estimate of the rate of
convergence of $\E F_N$ to ${\lambda}^2/2$.

\begin{proof}[\bf Proof of Theorem~\ref{thm:conv-means}]
In the following, let 
\[
\psi({\lambda})=\frac{{\lambda}^{2}}{2}-\E F_{N}.
\]
By Jensen's inequality, $\psi({\lambda})\geq0$. 

Let us now turn to an upper bound. Recall $H$ from \eqref{eq:ham-def}.
Observe that $H$ is centered and has covariance 
\[
\E H(x^1)H(x^2) = N(x^1,x^2)^k.
\]
It then follows that
\[
\frac{d}{d{\lambda}} \psi({\lambda}) = {\lambda} -\frac{1}{N} \E \gibbs{H} = {{\lambda}}\E\gibbs{R_{12}^k},
\]
where the first equality is by definition of the Gibbs measure and 
the second follows by Gaussian integration by parts for Gibbs expectations, \eqref{eq:GGIBP}.
We now claim that 
\begin{equation}\label{eq:gronwall-step}
\frac{d}{d{\lambda}}\psi({\lambda})={{\lambda}}\E\left\langle R_{12}^{k}\right\rangle \leq C\psi({\lambda})+\frac{C}{N^{{k/4-k\delta/2}}}.
\end{equation}
{for some constant $C>0$} and $\delta>0$ sufficiently small. With this claim in hand, we may apply Gronwall's inequality and the lower bound from above to obtain
\begin{align*}
0 & \leq\psi({\lambda})\leq\left(\psi(0)+\frac{1}{N^{k/2- k\delta}}\right)\exp\left(C{\lambda}\right)=\frac{C({\lambda})}{N^{{k/2-k\delta/2}}},
\end{align*}
as desired. Let us now turn to the proof of this claim.

Observe that the maps $W\mapsto F_{2,N}(u,\frac{1}{N};{\lambda})$ and
$W\mapsto F_{N}({\lambda})$ are uniformly ${{\lambda}}/\sqrt{N}$-Lipschitz, so
that Gaussian concentration of measure {(see for instance \cite{boucheron2013concentration}, Theorem~5.6)} implies that there
are constants $C,c {>0}$ {depending only on $\lambda$ and $k$} such that for any $\delta \in (0,1/2)$, with probability
at least $1-C\exp(-cN^{2\delta}),$
\begin{align*}
F_{2,N}(u,\frac{1}{N};{\lambda})-\E F_{2,N}(u,\frac{1}{N};{\lambda})  \leq \frac{1}{N^{1/2-\delta}},\qquad
F_{N}({\lambda})-\E F_{N}({\lambda})  \leq\frac{1}{N^{1/2-\delta}}.
\end{align*}
Thus, on this event,
call it $A(u,\delta)$,
\begin{equation}\label{eq:ldp-vs-conv}
\begin{aligned}
\frac{1}{N}\log\pi^{\tensor2}\left(\abs{R_{12}}\in(u-\frac{1}{N},u+\frac{1}{N})\right) & =  F_{2,N}(u,\frac{1}{N},{\lambda})-2F_{N}({\lambda}).\\
																					   & \leq2\left(\frac{{\lambda}^{2}}{2}-\E F_{N}({\lambda})\right)+\left(\E F_{2,N}(u,\frac{1}{N},{{\lambda}})-{\lambda}^{2}\right)+\frac{C}{N^{1/2-\delta}}.
\end{aligned}
\end{equation}
As we shall show in Corollary~\ref{cor:parisi}, for every $\delta>0$ there is some $c>0$ such
that for $N\geq 1$ and for all $N^{-1/2+\delta}\leq u\leq1${;}
\[
\E F_{2,N}(u,\frac{1}{N},{{\lambda}})-{\lambda}^{2}\leq-cu^{2}.
\]
Let 
\[
v=\frac{1}{c}\left(2\psi({\lambda})+\frac{C}{N^{1/2-\delta}}+\frac{1}{\sqrt N}\right).
\]
Then for $u^2\geq v$, on  $A(u,\delta)$,
\begin{equation}
\pi^{\tensor2}(|R_{12}|\in(u-\frac{1}{N},u+\frac{1}{N}))\leq\exp(-\sqrt{N}).\label{eq:decay-sqrt-n-ovlp}
\end{equation}
Consequently, if we take $\{u_i\}_{i=1}^L$ to be the centers of a partition of the interval $[v,1]$ into intervals of size {$2/N$ }, then if we let $A(\delta)=\cap_i A(u_i,\delta)$,
\begin{align*}
	\E\left\langle R_{12}^k\right\rangle  & \leq\E\left(\left\langle R_{12}^k\indicator{R_{12}^{2}>v}\right\rangle \indicator{A(\delta)}\right)+\E\left(\left\langle R_{12}^k\indicator{R_{12}^2\leq v}\right\rangle \indicator{A(\delta)}\right)+Ne^{-cN^{{2}\delta}}.\\
										  & \leq N\exp(-\sqrt{N})+v^{{k/2}}+Ne^{-cN^{{2}\delta}},
\end{align*}
where  we use that $L\leq N$.
From this it follows, by the inequality $(x+y)^k \leq 2^{k-1}(x^k+y^k)$, that
\begin{equation}
\E\left\langle R_{12}^k\right\rangle \leq C\left(\psi({\lambda})^k+\frac{1}{N^{{k/4-k\delta/2}}}\right)\label{eq:fe-conv-ovlp-gronwall}
\end{equation}
for some $C>0$, $\delta$ small enough and $N\geq 1$. The claim \eqref{eq:gronwall-step} then follows since {$\psi(\lambda) \leq \lambda_c^2$ for all $\lambda \leq \lambda_c$}.
For this last claim, observe that $\E F_N(\lambda)$ is convex in $\lambda$ with $\E F_N(0)= 0$ and right derivative $\frac{d}{d\lambda}\E F_N(0^+)=0$, so that $\E F_N\geq0$. 
As a result,  $\psi(\lambda)\leq \lambda^2\leq \lambda_c^2$.
\end{proof}
Notice that by the above argument, we also have the following.
\begin{cor}\label{cor:overlap}
For {any $k\geq 4$ even and} ${\lambda}<\lambda_{c}$ and $\eta>0$, there is a $C({\lambda},\eta)>0$ such that for $N$ sufficiently large,
\[
	\E\left\langle \abs{R_{12}}^k\right\rangle \leq\frac{C({\lambda}{,\eta})}{N^{{k/4 -k\eta/2}}}.
\]
\end{cor}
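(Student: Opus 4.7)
The plan is to extract the corollary directly from the intermediate estimates in the proof of \prettyref{thm:conv-means}. The inequality \eqref{eq:fe-conv-ovlp-gronwall} already provides essentially the desired overlap moment bound in terms of $\psi(\lambda)$, and combining it with the a posteriori bound on $\psi$ produced at the end of that theorem yields the claim, once the evenness of $k$ is used to equate $R_{12}^k$ with $|R_{12}|^k$. In this sense the corollary is a bookkeeping consequence of work already done.

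Concretely, I would first observe that since $k$ is even, $R_{12}^k = |R_{12}|^k$ pointwise, so that $\E\langle |R_{12}|^k \rangle = \E\langle R_{12}^k \rangle$. Next, I would invoke \eqref{eq:fe-conv-ovlp-gronwall}, which asserts that, for $\delta > 0$ sufficiently small and $N$ large enough,
\[
	\E\langle R_{12}^k \rangle \leq C\bigl(\psi(\lambda)^k + N^{-k/4 + k\delta/2}\bigr).
\]
Then I would apply the conclusion of \prettyref{thm:conv-means} itself, namely $\psi(\lambda) \leq C(\lambda)\,N^{-k/2 + k\delta/2}$. Raising this to the $k$-th power gives $\psi(\lambda)^k \leq C(\lambda)^k N^{-k^2/2 + k^2 \delta/2}$, which for $k \geq 4$ and $\delta$ small is negligible compared with $N^{-k/4 + k\delta/2}$, since $k^2/2 \geq 8 > k/4$ and the residual $\delta$-terms can be absorbed by choosing $\delta$ small relative to $\eta$. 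Finally, taking $\delta = \eta$ matches the exponent claimed in the statement.

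The main (and really only) step is the elementary algebraic check that $\psi(\lambda)^k$ is dominated by the $N^{-k/4 + k\delta/2}$ term; this is immediate from $k^2/2 \geq 2k \geq k/4$ for $k \geq 4$. There is no substantive obstacle: the corollary repackages the bounds produced along the way in the proof of \prettyref{thm:conv-means}, and one could alternatively rerun the partition-and-concentration argument leading to \eqref{eq:fe-conv-ovlp-gronwall} without using anything new.
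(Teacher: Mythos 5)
Your proof is correct and follows essentially the same route as the paper: the paper also deduces the corollary by feeding the bound of \prettyref{thm:conv-means} back into an inequality from its own proof, using \eqref{eq:gronwall-step} (where the $\psi$-power has already been absorbed into a linear term $C\psi$) rather than \eqref{eq:fe-conv-ovlp-gronwall}, which is an immaterial difference since the polynomial decay of $\psi$ makes the $\psi$-power term negligible in either form. (Only cosmetic quibble: your intermediate chain ``$k^2/2\geq 8>k/4$'' fails for large $k$, but your final justification $k^2/2\geq 2k\geq k/4$ is fine.)
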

\begin{proof}
	By \prettyref{thm:conv-means}, we have, {for $N$ large enough},
\[
\psi({\lambda}) \leq \frac{C({\lambda}{,\eta})}{N^{{k/4-k\eta/2}}}.
\]
Combining this with  \eqref{eq:gronwall-step} yields the desired {inequality}.
\end{proof}
We are now in a position to prove the variance decay. 

\begin{proof}[\bf Proof of Theorem~\ref{thm:variance-bound}]
	By the Gaussian Poincar\'e inequality {(see for instance \cite[Theorem 3.20]{boucheron2013concentration})}, 
\begin{align*}
\Var(F_{N}({\lambda})) & \leq\frac{1}{N^{2}}\E\sum_{1 \leq i_{1},\ldots,i_{k} \leq N}\left(\partial_{W_{i_{1}\ldots i_{k}}}\log Z({\lambda})\right)^{2}
  =\frac{{\lambda}^{2}}{N}\E\sum_{1 \leq i_{1},\ldots,i_{k} \leq N}\left\langle x_{i_{1}}\cdots x_{i_{k}}\right\rangle ^{2}
  =\frac{{\lambda}^{2}}{N}\E\left\langle R_{12}^k\right\rangle 
\end{align*}
The result then follows by combining this with Corollary~\ref{cor:overlap}. 
\end{proof}

\section{The Parisi functional and large deviations}

The main technical tool we need is a bound on the following expected
value, which is related to large deviations of $\abs{R_{12}}$ from its mean:
\[
\E\frac{1}{N}\log\pi^{\tensor2}(\abs{R_{12}}\in(u-\eta,u+\eta))=\E F_{2,N}(u,\eta)-2\E F_N.
\]
We relate the quantities $\E F_{2,N}(u,\eta)$ and
$\E F_{N}$ to explicit \emph{Parisi-type} formulas. In the following, let $\xi(t)=\lambda^{2}t^k$ and $\theta(t) = t \xi'(t) - \xi(t)$. For $u, \Lambda \in [ 0 ,1]$ and $m\in [1,2]$, define
\begin{align}
\mathcal P (u, m, \Lambda)  &= \xi(1) + (1-m)\theta(u) - \Lambda u  + \frac{1}{m} \log \frac{1 + \xi'(u) - \Lambda}{1 + (1-m) \xi'(u) - \Lambda} \\& - \frac{1}{2}\left(  \log( 1 + \xi'(u) + \Lambda) + \log ( 1 + \xi'(u) -\Lambda)  \right).
\end{align}
Then we have the following from \cite{TalSphPF06}. See also   \cite{PanchTal07,ko2018free} for alternative presentations. 
\begin{thm}
	For {$k\geq 2$ even}, there exists a constant $C( k)>0$ such that
	for every $N\geq1$, $\eta>0$, $m \in [1,2]$, $\Lambda \in [0,1]$, and $0<u<1$, we have 
\begin{align}
\E F_{2,N}(u,\eta;\lambda) & \leq \mathcal P (u,m, \Lambda )+\mathcal{R}\label{eq:bound-1}
\end{align}
where $ | \mathcal{R} | \le C\eta+\frac{C\log N}{N}$. 
\end{thm}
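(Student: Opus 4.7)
The result is a one-step replica symmetry breaking upper bound on the two-replica constrained free energy for the spherical $k$-spin model, in the spirit of the Guerra--Talagrand interpolation scheme. The plan is to replace the indicator of the overlap constraint by a Lagrange multiplier $\Lambda$, introduce a Guerra-type interpolation with a Poisson--Dirichlet cascade of parameter $m\in[1,2]$, and then evaluate both endpoints of the interpolation.

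\textbf{Step 1: tilt the constraint into a Lagrange multiplier.} Write
\begin{equation*}
e^{N F_{2,N}(u,\eta;\lambda)} = \iint_{|(x,y)-u|<\eta} e^{-\lambda H(x)-\lambda H(y)}\,dx\,dy.
\end{equation*}
On the constraint region, $|\Lambda\sqrt N((x,y)-u)|\le\Lambda\sqrt N\eta$, so inserting $e^{\Lambda\sqrt N((x,y)-u)}$ under the integral costs at most $\Lambda\sqrt N\eta$ in the exponent (absorbed into $C\eta$ after dividing by $N$). This produces the $-\Lambda u$ term in $\mathcal P$ and reduces the problem to bounding the tilted two-replica free energy.

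\textbf{Step 2: Guerra interpolation with one-step RSB.} For $t\in[0,1]$, introduce independent centered Gaussian processes $y^{(0)}(x)$ and $y^{(1)}_\alpha(x)$ indexed by $\alpha\in\N$, both with covariance $\xi'(u)(x,x')$ (with the $\alpha$-index carrying Poisson--Dirichlet weights $(v_\alpha)_{\alpha\ge1}$ of parameter $m$). Define the interpolating Hamiltonian on pairs $(x,y)\in(\bS^{N-1})^2$,
\begin{equation*}
H_t(x,y;\alpha) \;=\; \sqrt{t}\bigl(H(x)+H(y)\bigr) \;+\; \sqrt{1-t}\,\sqrt{N}\Bigl(y^{(1)}_\alpha(x)+y^{(1)}_\alpha(y)\Bigr) \;+\; \sqrt{1-t}\,\sqrt N\,\tilde y(x,y),
\end{equation*}
where $\tilde y(x,y)$ is a deterministic drift chosen so that the interpolation preserves the ``diagonal'' contributions $\xi(1)+(1-m)\theta(u)$. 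Let $\phi(t)=\frac{1}{N}\E\log\sum_\alpha v_\alpha\iint e^{-\lambda H_t(x,y;\alpha)+\Lambda\sqrt N(x,y)}\,dx\,dy$.

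\textbf{Step 3: sign of the derivative.} Compute $\phi'(t)$ using Gaussian integration by parts with respect to both $H$ and the cascade process. The standard manipulation produces
\begin{equation*}
\phi'(t) \;=\; -\tfrac{\lambda^2}{2}\,\E\bigl\langle \xi(R_{12}) - \xi(u) - \xi'(u)(R_{12}-u)\bigr\rangle_{t} \;+\; (\text{analogous terms for the cascade overlaps}),
\end{equation*}
where $\langle\cdot\rangle_t$ is the interpolated two-replica Gibbs average including the cascade. For $k$ even, $\xi(t)=\lambda^2 t^k$ is convex, so the tangent-line remainder $\xi(R_{12})-\xi(u)-\xi'(u)(R_{12}-u)$ is nonnegative and bounded by $O(\eta)$ on the constrained region $|R_{12}-u|<\eta$. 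The cascade terms vanish by the choice of covariance and the Ghirlanda--Guerra identities, giving $\phi'(t)\le C\eta$ uniformly in $t$.

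\textbf{Step 4: compute $\phi(0)$.} At $t=0$ the $H$-part vanishes and the two replicas interact only through $\Lambda$ and the cascade. Conditioning on $\alpha$, the spherical integral is Gaussian quadratic; the $(x,y)$ bilinear form has eigenvalues $1+\xi'(u)\pm\Lambda$, producing the two symmetric logarithms. Integrating out the Poisson--Dirichlet weights yields the $\frac{1}{m}\log\bigl(\frac{1+\xi'(u)-\Lambda}{1+(1-m)\xi'(u)-\Lambda}\bigr)$ factor, while the combination $\xi(1)+(1-m)\theta(u)$ arises from the diagonal drift $\tilde y$. The usual Laplace/log-volume corrections on the sphere are $O(\log N/N)$, which sits in $\mathcal R$.

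\textbf{Main obstacle.} The delicate point is step 3: arranging the auxiliary processes so that the Guerra remainder is sign-definite \emph{on the constrained set}, given the additional Lagrange multiplier $\Lambda$ and the spherical constraint on \emph{both} replicas simultaneously. The even-$k$ hypothesis is what guarantees convexity of $\xi$, without which the tangent-line remainder can have either sign. Once this sign control is in hand, the rest of the argument is a direct computation, and the two error sources — the constraint width $\eta$ and the spherical Laplace correction $\log N/N$ — combine into the stated $\mathcal R$.
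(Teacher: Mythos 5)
Your overall strategy (a Guerra--Talagrand interpolation for the coupled two-replica system, with the overlap constraint traded for a Lagrange multiplier $\Lambda$ and a one-step cascade of parameter $m$) is indeed the mechanism behind bounds of this type, but it is not what the paper does, and your sketch leaves the decisive step unproved. The paper does not reprove the interpolation bound at all: it first reduces by symmetry from $\big| |(x,y)|-u\big|<\eta$ to $(x,y)\in(u-\eta,u+\eta)$, then directly quotes the coupled-copies bound of Panchenko--Talagrand (their Eq.~2.22) with the explicit parameter choices $Q^0=Q^1=Q^2=0$, $Q^3$, $Q^4$, $\mathbf m=(0,1/2,m/2,1)$ and the matrix $A_3$ with diagonal $1+\xi'(u)$ and off-diagonal $-\Lambda$; the only new work is bounding the error term, which reduces to an elementary lower bound on a chi-square tail, $\frac1N\log\mathbb P\big(\sum_i X_i^2\ge N\big)\ge -\tfrac12(b-1-\log b)-C\tfrac{\log N}{N}$, yielding $|\mathcal R|\le C(\eta+\tfrac{\log N}{N})$.

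The genuine gap in your proposal is Step 3, which you yourself flag as the main obstacle but then dispose of with two claims that do not hold as stated. First, the overlaps appearing in the interpolation derivative are between \emph{distinct} replica pairs $(x^1,y^1)$ and $(x^2,y^2)$ drawn independently from the interpolated Gibbs measure; only the within-pair overlap $(x^\ell,y^\ell)$ is pinned to the window $(u-\eta,u+\eta)$, so the tangent-line remainder is \emph{not} ``bounded by $O(\eta)$ on the constrained region.'' One must instead show it is sign-definite, and for the coupled system this involves the full $2\times2$ overlap structure $R(x^1,x^2)$, $R(y^1,y^2)$, $R(x^1,y^2)$, $R(x^2,y^1)$ interacting with $\Lambda$; this sign analysis (where evenness of $k$, i.e.\ convexity of $\xi$ on all of $\R$, genuinely enters) is precisely the technical content of the cited Panchenko--Talagrand result, and your sketch does not supply it. Second, the cascade terms do not ``vanish by the Ghirlanda--Guerra identities'': no such identities are available for the interpolating measure in a Guerra-type \emph{upper} bound; those contributions are cancelled or dominated by construction of the auxiliary fields, not by GG. There is also a scaling slip: to produce the order-one term $-\Lambda u$ in $\mathcal P$ (and the $\pm\Lambda$ inside the logarithms) the tilt must be $e^{N\Lambda((x,y)-u)}$, not $e^{\Lambda\sqrt N((x,y)-u)}$; with your normalization the $\Lambda$-dependence would be $O(N^{-1/2})$ and the endpoint computation in Step 4 could not match the stated $\mathcal P(u,m,\Lambda)$. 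If you wish to avoid citing Panchenko--Talagrand, you would have to carry out their coupled-replica sign analysis in full; otherwise the efficient route is the paper's: plug in the parameters and control the sphere-versus-Gaussian error via the chi-square estimate.
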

\begin{proof}
We first observe that by symmetry of $H(x)$, it suffices to prove the same estimate for 
\[
\tilde F_{2,N}(u,\eta) = \frac{1}{N}\log\pi^{\tensor 2}( R_{12} \in (u-\eta,u+\eta)).
\]
We apply  \cite[Eq. 2.22]{PanchTal07} 
with the choice of parameters
\begin{align*}
Q^{0}  =Q^{1}=Q^2=0,\quad
Q^{3}  =\left(\begin{array}{cc}
u & u\\
u & u
\end{array}\right),\quad
Q^{4}  =\left(\begin{array}{cc}
1 & u\\
u & 1
\end{array}\right),
\end{align*}
\begin{equation}
\mathbf{m}=(0,1/2,m/2,1), \quad A_3 = \left(\begin{array}{cc}
1 + \xi'(u) & -\Lambda\\
-\Lambda  & 1 + \xi'(u)
\end{array}\right),
\end{equation}
 to obtain
 \[
 \E {\tilde{F}}_{2,N}(u,\eta)  \leq \cP(u,m,\Lambda) + \cR,
 \]
where the error term $\mathcal{R}$ in \cite[Eq. 2.22]{PanchTal07} 
satisfies 
\[
\mathcal R = {C \eta}  -  2 \left ( \frac{1 }{N}\log \mathbb P \left(\sum_{i=1}^N X_i^2 \geq N\right) + \frac{b - 1 - \log b}{2} \right),
\]
where $b=(1 + \xi'(u))$, the $X_i$ are i.i.d.\ gaussian random variables with variance $1/b$, as given in  \mbox{\cite[Eq. 2.14]{PanchTal07}}, and $C>0$ is universal. Using the elementary bound of Lemma~\ref{lem:chi_lb}, 
it follows that \mbox{$\mathcal |R| \leq C \left(\eta + \frac{\log N}{N}\right)$.}
Modifying $C$ appropriately yields the desired. \end{proof}
\begin{lem}\label{lem:parisi-1}
For {$k\geq 4$ even,}  $\lambda<\lambda_c$ and $\epsilon>0$, there are constants $C,c>0$ such that 
for every $N\geq1$, and $c>u\geq N^{-1/2+\epsilon}$, we
have 
\[
\E F_{2,N}\left(u,\frac{1}{N}{;\lambda}\right)\leq\lambda^{2}-Cu^{2}.
\]
\end{lem}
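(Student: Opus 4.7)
The plan is to apply the preceding theorem with the specific parameter choice $m=1$, $\Lambda=u$, and $\eta = 1/N$, and then show the resulting upper bound is at most $\lambda^2 - Cu^2$ for $u$ in a small interval $(0,c)$. At $m=1$ the term $(1-m)\theta(u)$ vanishes and the middle log ratio in $\mathcal{P}$ collapses to $\log\tfrac{1+\xi'(u)-\Lambda}{1-\Lambda}$. Combining this with the final bracket of $\mathcal{P}$, substituting $\Lambda = u$, and applying the identity $\tfrac12 \log\tfrac{a+u}{a-u} = \arctanh(u/a)$ with $a=1+\xi'(u)$, one obtains
\[
	\mathcal{P}(u,1,u) \ = \ \lambda^2 - u^2 - \log(1-u) - \arctanh\frac{u}{1+\xi'(u)}.
\]

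For the pointwise bound I would Taylor expand at $u=0$. From $-\log(1-u) = u + \tfrac{u^2}{2} + \tfrac{u^3}{3} + \tfrac{u^4}{4} + \tfrac{u^5}{5} + O(u^6)$ and
\[
	\arctanh\frac{u}{1+\xi'(u)} \ = \ u - u\xi'(u) + \frac{u^3}{3} + \frac{u^5}{5} + O(u^6),
\]
the $\pm u$, $\pm u^3/3$ and $\pm u^5/5$ contributions all cancel, leaving
\[
	\mathcal{P}(u,1,u) - \lambda^2 \ = \ -\frac{u^2}{2} + \frac{u^4}{4} + u\xi'(u) + O(u^6).
\]
Since $u\xi'(u) = k\lambda^2 u^k$ is of order $u^4$ when $k \geq 4$, the right-hand side is bounded above by $-u^2/2 + C_1(\lambda,k)\,u^4 + O(u^6)$. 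Choosing $c = c(\lambda,k) > 0$ small enough that $C_1 c^2 \leq 1/4$ and the higher-order remainder is absorbed gives $\mathcal{P}(u,1,u) \leq \lambda^2 - u^2/8$ throughout $(0,c)$. The hypothesis $k \geq 4$ enters exactly here: it guarantees $\xi'(u) = k\lambda^2 u^{k-1}$ is of order $u^3$ or higher and so does not corrupt the $-u^2/2$ leading coefficient.

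Finally I would combine with the error $\abs{\mathcal R} \leq C(\log N)/N$ from the preceding theorem to conclude $\E F_{2,N}(u, 1/N; \lambda) \leq \lambda^2 - u^2/8 + C(\log N)/N$. On the range $u \geq N^{-1/2+\epsilon}$ one has $u^2 \geq N^{-1+2\epsilon} \gg (\log N)/N$ for all sufficiently large $N$, so the error is absorbed into $u^2/16$; for the remaining finitely many small values of $N$ (those with $N^{-1/2+\epsilon} \geq c$) the range of $u$ is empty and the claim is vacuous. This yields $\E F_{2,N}(u, 1/N; \lambda) \leq \lambda^2 - u^2/16$, as required. I do not anticipate any serious obstacle; the only delicate point is verifying the Taylor cancellations producing the $-u^2/2$ coefficient. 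This motivates the choice $\Lambda = u$, which matches the first-order optimizer in $\Lambda$ of the replica-symmetric upper bound at $|R_{12}| = u$ in the regime where $\xi'(u)$ is negligible.
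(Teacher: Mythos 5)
Your proposal is correct and follows essentially the same route as the paper: both apply the Parisi-type upper bound with $m=1$ and $\Lambda$ linear in $u$, extract the $-cu^2$ decay from a second-order expansion of $\mathcal{P}(\cdot,1,\cdot)$ at the critical point $(0,0)$ (where the value is $\lambda^2$), and then absorb the $O(\eta+\tfrac{\log N}{N})$ error using $u\geq N^{-1/2+\epsilon}$, handling the finitely many small $N$ by adjusting the constants $c,C$ just as the paper does. The only difference is cosmetic: the paper moves along the negative eigendirection $\Lambda=\tfrac{\sqrt{5}-1}{2}u$ of the Hessian $\bigl(\begin{smallmatrix}0&-1\\-1&1\end{smallmatrix}\bigr)$, while you take $\Lambda=u$ (also a negative direction of that same Hessian) and verify the quadratic decay via the explicit closed form and Taylor expansion.
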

\begin{proof}
Observe that $\mathcal{P}(u,1, \Lambda)$ is $C^{2}$ in $(u,\Lambda)$ and $(0,0)$ is a critical
point with Hessian 
\[
	\operatorname{Hess}({\mathcal{P}})(0,0) = 
\left(\begin{array}{cc}
0 & -1 \\
-1 & {1}
\end{array}\right).
\]

This has an eigenvector of the form $(1,x)$ for some $x>0$ with
strictly negative eigenvalue $-\mu<0$. It follows that for $(u,\Lambda)=(u,ux)$ we have
\[
\mathcal P (u,1,ux)\leq \cP (0,1, 0)-K u^{2}=\lambda^{2}-K u^{2}
\]
 for $u\leq c$ for some $K,c>0$ independent of $N$. Combining this 
 with \eqref{eq:bound-1}, we obtain
 \[
 \E F_{2,N}(u,\eta{;\lambda}) \leq \lambda^2 - K u^2 +C\left(\eta+\frac{\log N}{N}\right).
 \]
 If we choose $\eta = 1/N$ and decrease $K$, the result follows since $u^2 \geq N^{-1+\eps}> \log N/N$ 
 for $\eps>0$ and $N\geq 1$.
\end{proof}
\begin{lem}\label{lem:parisi-2}
{For $\lambda<\lambda_{c}$ and $\epsilon>0$, there are $K,C>0$} such that for every
$u>\epsilon$, and  {$N\geq 1$} 
\[
\E F_{2,N}( u ,\frac{1}{N})\leq\lambda^2-K  + C\cdot\frac{\log N}{N}.
\]
\end{lem}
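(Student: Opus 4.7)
The plan is to leverage the Parisi-type upper bound
\[
\mathbb{E}\, F_{2,N}(u, 1/N) \leq \cP(u, m, \Lambda) + \frac{C \log N}{N},
\]
valid for any $m \in [1,2]$ and $\Lambda \in [0,1]$ by the theorem above, and to exhibit, for each $u \in [\epsilon, 1]$, a choice $(m(u), \Lambda(u))$ for which $\cP(u, m(u), \Lambda(u)) \leq \lambda^2 - K$ uniformly. To this end I would split $[\epsilon, 1]$ into a boundary layer $(1 - \delta, 1]$ and a bulk $[\epsilon, 1 - \delta]$.

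For the boundary layer I would bypass the Parisi bound entirely. The normalized surface measure of the overlap strip $\{(x, y) \in (\mathbb{S}^{N-1})^2 : |R_{12} - u| \leq 1/N\}$ is at most $C N^{-1}(1 - u^2)^{(N-2)/2}$ by a direct geometric computation, and the standard ground-state bound $\mathbb{E}\sup_{x \in \mathbb{S}^{N-1}} |H(x)| \leq C_1 N$ (cf.\ \eqref{eq:def_GS} and \prettyref{thm:max_likelihood}) gives
\[
\mathbb{E}\, F_{2,N}(u, 1/N) \leq \tfrac{1}{2}\log(1 - u^2) + 2\lambda C_1 + o(1),
\]
which is $\leq \lambda^2 - K$ once $\delta$ is chosen small enough depending on $\lambda$ and $K$.

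For the bulk I would use compactness. Define $g(u) := \inf_{(m, \Lambda) \in [1,2] \times [0, 1]} \cP(u, m, \Lambda)$. Since $\cP$ is jointly continuous on the compact feasible set (staying away from the singularities of the logarithms, which can be handled by shrinking the effective domain of $\Lambda$ slightly), $g$ is continuous in $u$. If the pointwise strict inequality $g(u) < \lambda^2$ holds for every $u \in (0, 1)$, then by compactness of $[\epsilon, 1 - \delta]$ one gets $\sup_{u \in [\epsilon, 1 - \delta]} g(u) < \lambda^2$, yielding the required uniform gap $K > 0$.

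The crux is the pointwise claim $g(u) < \lambda^2$. The natural attempt is to perturb from $(m, \Lambda) = (1, 0)$, where $\cP \equiv \lambda^2$ identically in $u$. A direct computation gives
\[
\partial_\Lambda \cP(u, 1, 0) \;=\; \frac{\xi'(u)(1 - u) - u}{1 + \xi'(u)},
\]
which is strictly negative exactly when $k \lambda^2 u^{k-2}(1-u) < 1$; maximizing the left hand side over $u \in (0,1)$ gives $\lambda^2/\lambda_s^2$, so this inequality holds uniformly on $(0,1)$ whenever $\lambda < \lambda_s$, and a small positive $\Lambda$ then yields $\cP(u, 1, \Lambda) < \lambda^2$. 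For $\lambda \in [\lambda_s, \lambda_c)$, however, the single-parameter perturbation fails on an interval of $u$, and one must exploit the full two-parameter family $(m, \Lambda)$; the assumption $\lambda < \lambda_c$, equivalently $f_\lambda(t) < 0$ for every $t \in (0, 1)$, must enter essentially to guarantee a joint descent direction. Carrying out this joint perturbation cleanly in the hard regime $\lambda \in [\lambda_s, \lambda_c)$ is the main technical obstacle of the argument.
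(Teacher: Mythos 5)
Your overall framework (the Parisi-type upper bound \eqref{eq:bound-1} together with a compactness argument to turn a pointwise gap into a uniform one on $[\eps,1]$) is sound, and your $\Lambda$-perturbation at $(m,\Lambda)=(1,0)$ does handle $\lambda<\lambda_s$. But the argument is incomplete exactly where the lemma has content: you leave the regime $\lambda\in[\lambda_s,\lambda_c)$ as an unresolved ``main technical obstacle,'' so what you actually establish is the weaker statement with $\lambda_c$ replaced by $\lambda_s$. That is a genuine gap, not a routine detail: the whole point of this lemma (and of the paper's hypothesis-testing result) is to reach the threshold $\lambda_c$.

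The missing idea is that the correct descent direction is in $m$, not in $\Lambda$. Keeping $\Lambda=0$ and differentiating at $m=1$ gives
\[
\frac{d}{dm}\Big|_{m=1}\cP(u,m,0)\;=\;-\big(u\xi'(u)-\xi(u)\big)+\xi'(u)-\log\big(1+\xi'(u)\big)\;=\;\phi_{\lambda}(u),
\]
with $\phi_{\lambda}$ as in \eqref{eq:def_phi}, and by Lemma~\ref{lem:gauss_scalar} one has $\phi_{\lambda}(u)<0$ for every $u\in(0,1]$ whenever $\lambda<\lambda_c$; this is precisely how the hypothesis $\lambda<\lambda_c$ enters (the functions $\phi_\lambda$ and $f_\lambda$ have the same maxima and maximizers). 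Hence for each $u\in(0,1]$ some $m>1$ gives $\cP(u,m,0)<\lambda^2$, and since $\Phi(u)=\inf_{m}\cP(u,m,0)$ is upper semicontinuous, $\sup_{u\in[\eps,1]}\Phi(u)\leq\lambda^2-K(\eps)$ for some $K(\eps)>0$; plugging this into \eqref{eq:bound-1} with $\eta=1/N$ finishes the proof. With this single one-parameter (one-step RSB) perturbation the whole interval $[\eps,1]$ is covered uniformly for all $\lambda<\lambda_c$, so your boundary-layer/geometric estimate near $u=1$ and the two-parameter infimum $g(u)$ are not needed.
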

\begin{proof}
	{Notice that }
\begin{align*}
	\frac{d}{dm}\bigg|_{m=1} \cP (s ,m,0) &= -(s \xi'(s ) - \xi (s ))+\xi'(s )-\log(1+\xi'(s ))
		 {= \phi_{\lambda}(s),}
\end{align*}
where $\phi_{\lambda}$ is defined by \eqref{eq:def_phi}.
By Lemma~\ref{lem:gauss_scalar} we have $\phi_{\lambda}(s) < 0$ for all $\lambda < \lambda_c$ and $s \in (0,1]$.

Note that $\cP(u,1,0)=\lambda^2$. Thus for every $0<u \leq 1$,  $\cP (u,m ,{0})< \lambda^2$ for some $m> 1$.
{Observe that $\Phi(u) = \inf_m \cP(u,m,0)$ is upper-semicontinuous. Thus for any 
	$\eps>0$, there exists $K(\eps){>0}$ such that for all $u\in [\eps,1]$,
\[
\Phi(u) <\lambda^2 -K(\eps).
\]
In particular, for such $u$,} it follows that
\[
\E F_{2,N}(u,{\frac{1}{N}})\leq \lambda^2 - K(\epsilon)  + C\left( \frac{\log N}{N}\right)
\]
for $u > \epsilon$, which implies the desired result.
\end{proof}
Combining these two results, we obtain the following.
\begin{cor}\label{cor:parisi}
For $\lambda<\lambda_c$ and  $\eps>0$ sufficiently small, there is a $c>0$ such that
for $N\geq 1$,
\[
\E F_{2,N}(u,\frac{1}{N};\lambda)\leq \lambda^2 - c u^2,
\]
{for all $N^{-1/2+\eps}<u\leq 1$.}
\end{cor}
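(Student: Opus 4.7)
The plan is to derive this corollary by combining Lemmas~\ref{lem:parisi-1} and \ref{lem:parisi-2} via a case split on the size of $u$. Let $c_0>0$ denote the constant $c$ produced by Lemma~\ref{lem:parisi-1}. Then the interval $(N^{-1/2+\eps},1]$ decomposes as
\[
(N^{-1/2+\eps},c_0]\;\cup\;(c_0,1],
\]
and each piece will be handled by one of the two preceding lemmas. The only issue is to verify that the constants can be chosen compatibly so that both bounds fit under a single quadratic envelope $\lambda^2-cu^2$.

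On the lower piece $u\in (N^{-1/2+\eps},c_0]$, I would apply Lemma~\ref{lem:parisi-1} verbatim, which already gives
\[
\E F_{2,N}\!\left(u,\tfrac{1}{N};\lambda\right)\;\leq\;\lambda^2-C_1 u^2
\]
for some $C_1>0$ independent of $N$. On the upper piece $u\in(c_0,1]$, I would apply Lemma~\ref{lem:parisi-2} with the role of its $\epsilon$ played by $c_0$. This yields constants $K,C_2>0$ (depending only on $\lambda$ and $c_0$) with
\[
\E F_{2,N}\!\left(u,\tfrac{1}{N};\lambda\right)\;\leq\;\lambda^2-K+C_2\,\frac{\log N}{N}.
\]

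The main obstacle, modest as it is, lies in taming the $\log N/N$ error term in the second regime so that the bound can be written in the form $\lambda^2-c u^2$ uniformly in $N\ge 1$. For $N\ge N_0$ with $N_0$ chosen so that $C_2\log N_0/N_0\le K/2$, the right-hand side is at most $\lambda^2-K/2$, and since $u\le 1$ this is bounded by $\lambda^2-(K/2)u^2$. For the finitely many exceptional values $1\le N<N_0$, the admissible range $u\in(N^{-1/2+\eps},1]$ is a compact subset of $(0,1]$ bounded away from zero, and $\E F_{2,N}(u,1/N;\lambda)$ is a bounded function there; hence one can absorb these into the constant by shrinking $c$ further (since each such $N$ contributes only a finite bound strictly below $\lambda^2$, as already furnished by Lemma~\ref{lem:parisi-2}).

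Setting $c:=\min(C_1,K/2,c_{\mathrm{small}})>0$ then produces the uniform bound $\E F_{2,N}(u,1/N;\lambda)\le \lambda^2-cu^2$ on the entire range $N^{-1/2+\eps}<u\leq 1$, $N\ge 1$, which is precisely the assertion of the corollary.
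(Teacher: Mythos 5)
Your proposal is correct and follows essentially the same route as the paper: split the range of $u$ at the constant $c_0$ coming from Lemma~\ref{lem:parisi-1}, invoke Lemma~\ref{lem:parisi-2} (with its $\epsilon$ taken to be $c_0$) on the upper piece, and set $c$ to be the minimum of the resulting constants. Your explicit absorption of the $C\log N/N$ error (large $N$ via $N_0$, finitely many small $N$ by shrinking $c$) is if anything slightly more careful than the paper's one-line treatment, so there is nothing further to add.
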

\begin{proof}
{Fix $\lambda$ and $\eps>0$.} By \prettyref{lem:parisi-1}, there is some $c_1,c_2>0$ such that for {all $N\geq 1$ and } 
$N^{-1/2+\epsilon}< u<c_1$, 
\[
\E F_{2,N}(u,\eta)\leq \lambda^2 - c_2 u^2
\]
Now for $c_1<u<1$, let $K(c_1)$ be as in \prettyref{lem:parisi-2}.
Then $K(c_1)u^2<K(c_1)$, so that, if we take $c = \min\{c_2, K(c_1)\}$
the result follows.
\end{proof}

\section{Estimation}
In this section, we prove Theorem~\ref{thm:max_likelihood}. We begin by providing a lower bound for the maximum likelihood for every $\lambda\geq 0$ using results on the ground state of the mixed $p$-spin model recently proved in \cite{jagannath2017low,chen2017parisi}.
We then use
the information-theoretic bound on the maximal correlation achievable by any
estimator from \cite{lesieur2017statistical} to obtain the matching upper bound.
We end by proving the desired result for the correlation $(\what{x}^{\rm ML}_{\lambda},X)$. In the remainder of this paper,
for ease of notation, we let 
\begin{equation}\label{eq:def_Hl}
	H_{\lambda}(x) = H(x) + \lambda N (x,X)^k,
\end{equation}
where $H(x)$ is as in \eqref{eq:ham-def}.

\subsection{Variational formula for the ground state of the mixed $p$-spin model}
We begin by recalling the following variational formula for the ground state of the mixed $p$-spin model. 
Consider the Gaussian process indexed by $x \in \mathbb{S}^{N-1}$:
$$
Y_N(x) = {\sqrt{N}} \sum_{p \geq 1} a_p \sum_{1 \leq i_1, \dots, i_p \leq N}  g_{i_1, \dots, i_p} x_{i_1} \dots x_{i_p},
$$
where $g_{i_1,\ldots,i_p}$ are i.i.d.\ standard Gaussian random variables and $\sum_{p \geq 1} 2^p a_p^2 < \infty$. 
The covariance of $Y_N$ is given by $$\E \big[Y_N(x) Y_N(y)\big] = {N}\xi((x,y)),$$ where
$\xi(t) = \sum_{p \geq 1} a_p^{{2}} t^p.$
Let $\cC$ denote the subset of $C([0,1])$ of functions that are positive, non-increasing and concave.
For any $h \geq 0$, we let $P_{h}:\cC\to \R$ be 
\[
	P_h(\phi) = \int \xi''(x)\phi(x) + \frac{1}{\phi(x)}dx + (h^2+\xi'(0)) \phi(0).
\]
Set 
\begin{equation}
\mathcal{G}(\xi,h)=\frac{1}{2} \min_{\phi\in \cC} P_h(\phi).
\end{equation}
Let us recall the following variational formula. For $x \in \mathbb S^{N-1}$, we write $x = (x_1, \dots , x_N)$.
\begin{thm}[\cite{chen2017parisi,jagannath2017low}]\label{thm:GS}
	For all $h \geq 0$,
	$$
	{\lim_{N\to\infty}\frac{1}{N}\max_{x \in \mathbb{S}^{N-1}} \Big\{ Y_N(x) + h \sqrt N \sum_{i=1}^N x_i \Big\}}
	= \mathcal{G}(\xi,h),
	$$
almost surely and in $L^1$.  
\end{thm}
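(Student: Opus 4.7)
The plan is to deduce this ground state formula from the Crisanti-Sommers-Talagrand (Parisi) formula for the spherical mixed $p$-spin free energy at positive temperature, then take a careful zero-temperature limit, and finally upgrade convergence of expectations to almost sure and $L^1$ convergence via Gaussian concentration.

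First I would show that it suffices to prove convergence of the expectation to $\mathcal{G}(\xi,h)$. Writing $F_N$ for the random variable on the left side, the map from the Gaussian coupling vector to $F_N$ is $O(N^{-1/2})$-Lipschitz in Euclidean norm, since for any two realizations $g,g'$ of the disorder one has $|\max_x Y_N(x) - \max_x Y'_N(x)| \leq \max_{x\in\mathbb{S}^{N-1}} |Y_N(x) - Y'_N(x)|$, and the latter has $L^2$-norm bounded by $\|g - g'\|\sqrt{N\xi(1)}$. Gaussian concentration then gives $\prob(|F_N - \E F_N|>t) \leq 2\exp(-cNt^2)$ uniformly in $N$. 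This bound provides both the Borel--Cantelli input for almost sure convergence and the uniform subgaussian tails needed for $L^1$ convergence, once convergence in expectation is established.

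Next I would express the ground state as a zero-temperature limit of a finite-temperature free energy. For $\beta>0$, set
\[
	Z_N(\beta) = \int_{\mathbb{S}^{N-1}} \exp\Bigl(\beta Y_N(x) + \beta h\sqrt{N} \textstyle\sum_{i} x_i\Bigr)\,dx.
\]
A Laplace-type argument using a $1/N$-net on $\mathbb{S}^{N-1}$ and the $O(\sqrt{N})$-Lipschitz constant of $Y_N$ (pathwise) yields
\[
	\left| F_N - \frac{1}{\beta N} \log Z_N(\beta) \right| \leq \frac{C\log N}{\beta N}
\]
on an event of probability $1 - O(e^{-cN})$. At each fixed $\beta$, the Crisanti--Sommers--Talagrand formula (proved by Talagrand for the pure case and extended to mixed spherical models by Chen) expresses $\lim_N \frac{1}{N}\E\log Z_N(\beta)$ as an explicit variational problem over a Parisi measure $\mu_\beta$ on $[0,1]$ together with an auxiliary edge parameter coming from the spherical constraint.

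The bulk of the work is then taking the zero-temperature limit of the Parisi variational problem. Following \cite{chen2017parisi,jagannath2017low}, the key idea is to introduce the change of variables $\phi_\beta(q) = \beta \cdot \mu_\beta([q,1])$ (or a close variant), which rescales the Parisi measure into a non-increasing, concave, nonnegative function, i.e.\ an element of $\cC$, in the limit $\beta\to\infty$. Under this rescaling the normalized Parisi functional at inverse temperature $\beta$ converges termwise to $\frac{1}{2} P_h(\phi)$. The hard part will be justifying the exchange of the limits $\beta\to\infty$ and $N\to\infty$: the upper bound $\E F_N \leq \mathcal{G}(\xi,h) + o(1)$ comes from a Guerra-type interpolation that is uniform in $\beta$ after the rescaling, while the matching lower bound requires a zero-temperature Guerra--Talagrand interpolation set up directly in terms of $\phi\in\cC$ (together with a Gaussian coupling for the external field), bypassing any passage through positive-temperature Parisi measures. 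Combining the positive-temperature Parisi formula, the zero-temperature limit, and the concentration estimate completes the proof.
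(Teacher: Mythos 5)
This theorem is not actually proved in the paper: it is quoted from \cite{chen2017parisi,jagannath2017low}, and the only argument the paper itself supplies is the remark explaining how to pass from the case $\xi'(0)=0$ treated in those references to $\xi'(0)>0$ (replace $\xi(t)\mapsto\xi(t)-\xi'(0)t$ and $h^2\mapsto h^2+\xi'(0)$, using the Crisanti--Sommers formula in the form of \cite{chen2013aizenman} and the integration-by-parts reformulation of \cite{jagannath2017low}). Your outline --- Gaussian concentration to reduce almost sure and $L^1$ convergence to convergence of expectations, then the ground state as a zero-temperature limit of the Crisanti--Sommers/Parisi variational problem with a rescaled order parameter landing in $\cC$ --- is essentially the strategy of those cited works, so at the level of architecture you are on track; note, however, that your sketch never addresses the $\xi'(0)>0$/external-field reduction, which is the one piece the paper actually argues.

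Two substantive corrections. First, the bound $\big|F_N-\frac{1}{\beta N}\log Z_N(\beta)\big|\le C\log N/(\beta N)$ is false at fixed $\beta$: with the normalized measure on $\mathbb{S}^{N-1}$ one has $\frac{1}{\beta N}\log Z_N(\beta)\le F_N$, but the reverse inequality costs the entropy of a cap, i.e.\ an error of order $\epsilon\,\sup_x\|\nabla Y_N(x)\|/N+|\log\epsilon|/\beta\sim\log\beta/\beta$ after optimizing $\epsilon$, uniformly in $N$ on the event $\sup_x\|\nabla Y_N(x)\|\le CN$; at fixed $\beta$ the quantity $\frac{1}{\beta N}\log Z_N(\beta)$ and the ground state differ by an amount that does not vanish as $N\to\infty$. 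Second, once you have this uniform-in-$N$, $O(\log\beta/\beta)$ comparison, the interchange of the limits $N\to\infty$ and $\beta\to\infty$ is automatic, and no zero-temperature Guerra--Talagrand interpolation is needed for the lower bound: the Crisanti--Sommers formula at each fixed $\beta$ plus the comparison reduce everything to the deterministic variational statement that $\frac1\beta$ times the Crisanti--Sommers value converges, as $\beta\to\infty$, to $\frac12\min_{\phi\in\cC}P_h(\phi)$. That ($\Gamma$-convergence-type) analysis is where the real work in \cite{chen2017parisi,jagannath2017low} lies, and the correct rescaled variable there is essentially $\beta\int_q^1 x_\beta(s)\,ds$ (plus the contribution of the spherical Lagrange multiplier), which is the object that is concave and hence lies in $\cC$, rather than $\beta\mu_\beta([q,1])$, which is nonincreasing but in general not concave.
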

\begin{rem}
While the results of \cite{chen2017parisi,jagannath2017low}
are stated with $\xi'(0)=0$, they still hold when $\xi'(0)>0$ by replacing $\xi\mapsto\xi(t)-\xi'(0) t$ and   $h^2\mapsto h^2+\xi'(0)$ .
To see this, simply note that the Crisanti-Sommers formula still holds in this setting by the main result of \cite{chen2013aizenman}. 
The reformulation from \cite[Eq. (1.0.1)]{jagannath2017low} is then changed by this replacement by simply 
repeating the integration by parts argument from \cite[Lemma 6.1.1]{jagannath2017low}. From here the arguments are unchanged under the above replacement.
\end{rem}

\subsection{The lower bound}

By Borell's inequality, the constrained maximum likelihood \eqref{eq:def_E} concentrates around its mean with sub-Gaussian tails. In particular, combining this with Borell-Cantelli we see that
\begin{equation}\label{eq:def_E2}
	E_{\lambda}(m) = \lim_{N \to \infty} \frac{1}{N} \E \Big[ \max_{x \in \mathbb{S}^{N-1}, \, (x,X) = m} \Big\{ \lambda N (x,X)^k + H(x) \Big\} \Big].
\end{equation}
Clearly,  $\liminf \frac{1}{N} \E \big[\max_{\mathbb{S}^{N-1}} H_{\lambda}\big] \geq E_{\lambda}(m)$ for all $m \in [-1,1]$. 
{Recall the definition of $\lambda_s$ from \eqref{eq:lambda_s-def} and $q_s(\lambda)$, see, e.g., Lemma~\ref{lem:gauss_scalar}.}
If we apply this for $\lambda > \lambda_s$ and $m  = \sqrt{q_s(\lambda)} > \sqrt{1-\frac{1}{k-1}}$ (by Lemma~\ref{lem:gauss_scalar}), Lemma~\ref{lem:up_qs} below will immediately yield the following lower bound. 
\begin{lem}\label{lem:lower_bound}
	For all $\lambda > \lambda_s$,
	\begin{equation}\label{eq:lower_bound}
	\liminf_{N \to \infty} \E\Big[\frac{1}{N}\max_{x \in \mathbb{S}^{N-1}} H_{\lambda}(x) \Big] \geq \sqrt{k}\frac{1 + \lambda^2 q_s(\lambda)^{k-1}}{\sqrt{1 + \lambda^2 k q_s(\lambda)^{k-1}}}.
	\end{equation}
\end{lem}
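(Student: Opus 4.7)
The plan is to combine a trivial restriction inequality with the explicit formula for the constrained ground state at a particular choice of overlap. For any fixed $m \in [-1,1]$, the containment $\{x \in \mathbb{S}^{N-1} : (x,X) = m\} \subseteq \mathbb{S}^{N-1}$ immediately gives
\[
\max_{x \in \mathbb{S}^{N-1}} H_\lambda(x) \ \geq\ \max_{x \in \mathbb{S}^{N-1},\,(x,X) = m} H_\lambda(x).
\]
Taking expectations, dividing by $N$, and passing to the liminf, the definition \eqref{eq:def_E2} of $E_\lambda$ yields
\[
\liminf_{N \to \infty} \frac{1}{N}\,\E\Big[\max_{x \in \mathbb{S}^{N-1}} H_\lambda(x)\Big] \ \geq\ E_\lambda(m)
\]
for every admissible $m$, so the lemma reduces to producing a sufficiently good lower bound on $E_\lambda(m)$.

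The natural choice is $m = \sqrt{q_s(\lambda)}$. For $\lambda > \lambda_s$ this quantity is well-defined and strictly positive by Lemma \ref{lem:gauss_scalar}, and the same lemma guarantees $q_s(\lambda) > 1 - 1/(k-1)$. The latter inequality is crucial: it places $m$ in the ``replica symmetric'' window in which the Parisi-type variational representation of $E_\lambda(m)$ collapses to a finite-dimensional problem and admits an explicit answer. The content of Lemma \ref{lem:up_qs} is precisely the identification
\[
E_\lambda\bigl(\sqrt{q_s(\lambda)}\bigr) \ =\ \sqrt{k}\,\frac{1 + \lambda^2 q_s(\lambda)^{k-1}}{\sqrt{1 + \lambda^2 k q_s(\lambda)^{k-1}}},
\]
and plugging this into the inequality above completes the argument.

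To actually obtain this closed form, I would decompose any $x$ with $(x,X)=m$ as $x = mX + \sqrt{1-m^2}\,y$ with $y \in \mathbb{S}^{N-2} \subset X^\perp$ and expand $H(x) = \sqrt N (W, x^{\otimes k})$ via the multilinearity of $W$. This exhibits the restricted Hamiltonian as a Gaussian field on the unit sphere of $X^\perp$ whose covariance is of mixed $p$-spin type, perturbed by an effective external field of magnitude proportional to $m^{k-1}$ arising from the cross-terms. Theorem \ref{thm:GS} then identifies its $L^1$ limit as $\tfrac{1}{2}\min_{\phi \in \mathcal{C}} P_h(\phi)$ for explicit choices of $\xi$ and $h$. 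The main obstacle, and the reason the threshold $q_s > 1 - 1/(k-1)$ appears, is to check that the minimizer $\phi$ is \emph{constant} in this regime: that inequality is exactly the one under which the second variation of $P_h$ about a constant remains non-negative, i.e., the replica-symmetric solution is stable. Once constancy is established, the minimization reduces to a one-variable problem whose stationarity condition can be manipulated using the defining equation $f_\lambda'(q_s) = 0$ for $q_s$, which after straightforward algebra matches the claimed closed form and proves \eqref{eq:lower_bound}.
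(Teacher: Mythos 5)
Your proposal is correct and follows the paper's proof essentially verbatim: restrict the maximum to the fiber $(x,X)=\sqrt{q_s(\lambda)}$ (legitimate since $q_s(\lambda)>1-\tfrac{1}{k-1}$ by Lemma~\ref{lem:gauss_scalar}), pass to expectations using \eqref{eq:def_E2}, and evaluate $E_\lambda\bigl(\sqrt{q_s(\lambda)}\bigr)$ via Lemma~\ref{lem:up_qs}, which itself rests on the representation \eqref{eq:E-rs} and the simplification \eqref{eq:E} valid for $|m|\ge\sqrt{1-1/(k-1)}$, exactly as you sketch. The only cosmetic difference is that the paper justifies the replica-symmetric collapse of $\cG(\xi_m,0)$ not by a stability/second-variation computation but by quoting the criterion $\xi'(1)+h^2\ge\xi''(1)$ from \cite{jagannath2017low,chen2017parisi}.
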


We now turn to the proof of \prettyref{lem:up_qs}. We begin by observing the following explicit representation for $E_\lambda$.

\begin{lem}
	For all $m \in [-1,1]$ the limit in \eqref{eq:def_E} exists and
\begin{equation}\label{eq:E-rs}
	E_{\lambda}(m)
		={\lambda m^k }+ \cG(\xi_m,0),
\end{equation} 
where $\xi_m(t) = (m^2 + (1-m^2)t )^k - m^{2k}$.
\end{lem}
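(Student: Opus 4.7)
The plan is to parametrize the constraint $\{(x,X)=m\}$, show that the restricted Hamiltonian is, up to a negligible single-Gaussian correction, a mixed $p$-spin model on a sphere of one lower dimension, and then apply \prettyref{thm:GS}.

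Every $x \in \mathbb S^{N-1}$ with $(x,X)=m$ decomposes uniquely as $x = mX + \sqrt{1-m^2}\,z$ with $z \in \mathbb S^{N-1}\cap X^\perp$, a sphere of dimension $N-2$. Since $\lambda N(x,X)^k = \lambda N m^k$ is deterministic on the constraint, it suffices to analyze $\max_z H(mX + \sqrt{1-m^2}\,z)$. Using $\E[(W,x^{\tensor k})(W,y^{\tensor k})] = (x,y)^k$ together with the identity $(mX+\sqrt{1-m^2}\,z,\, mX+\sqrt{1-m^2}\,z') = m^2 + (1-m^2)(z,z')$, the process $z \mapsto H(mX+\sqrt{1-m^2}\,z)$ is centered Gaussian with covariance $N(m^2+(1-m^2)(z,z'))^k$.

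The next step is to separate the unique $z$-constant component. Passing to coordinates where $X=e_N$ (so $z_N=0$) and expanding $H(mX+\sqrt{1-m^2}\,z)$ multilinearly, group the terms by the number of ``$X$-factors''; the only term constant in $z$ is the all-$X$ term, yielding
\[
H(mX+\sqrt{1-m^2}\,z) \;=\; h_0 + H'(z), \qquad h_0 \;=\; m^k\sqrt{N}\,W_{N\cdots N},
\]
where $h_0$ is independent of the Gaussian process $H'$ and $H'$ has covariance $N\xi_m((z,z'))$ with $\xi_m(t) = (m^2+(1-m^2)t)^k - m^{2k} = \sum_{p=1}^k \binom{k}{p} m^{2(k-p)}(1-m^2)^p\,t^p$. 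Since $\Var(h_0/N) = m^{2k}/N \to 0$, the $h_0$ contribution vanishes as $N\to\infty$, both almost surely (by Borel--Cantelli) and in expectation.

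To conclude, on the rotated copy $X^\perp \cong \R^{N-1}$, $H'$ agrees in distribution with $\sqrt{N/(N-1)}\,Y_{N-1}(z)$ in the notation of \prettyref{thm:GS} with mixing function $\xi_m$ (whose polynomial coefficients are non-negative, so the required $a_p$ are well defined). Applying \prettyref{thm:GS} with $h=0$ gives $\frac{1}{N-1}\max_{z \in \mathbb S^{N-2}} Y_{N-1}(z) \to \cG(\xi_m,0)$ almost surely, and the rescaling $\sqrt{(N-1)/N}\to 1$ preserves the limit, so $\frac{1}{N}\max_z H'(z) \to \cG(\xi_m,0)$ a.s.; Borell's inequality as in \eqref{eq:def_E2} upgrades this to convergence in expectation, establishing existence of the limit. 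Adding back $\lambda m^k$ yields $E_\lambda(m) = \lambda m^k + \cG(\xi_m,0)$. The only step that requires real computation is the covariance identity for $H'$; the rest is a direct invocation of \prettyref{thm:GS}.
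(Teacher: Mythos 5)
Your argument is correct and follows essentially the same route as the paper: reduce by rotational invariance to the slice $(x,X)=m$, identify the restricted Hamiltonian (minus the single all-$X$ Gaussian term, which is negligible) as a mixed $p$-spin process on a sphere of one lower dimension with mixture $\xi_m(t)=(m^2+(1-m^2)t)^k-m^{2k}$, and invoke Theorem~\ref{thm:GS} with $h=0$. Your treatment of the constant term and of the $N$ versus $N-1$ rescaling is slightly more explicit than the paper's, but the substance is identical.
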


\begin{proof}
We begin by observing that by rotational invariance,
$$
\frac{1}{N} \E \Big[ \max_{x \in \mathbb{S}^{N-1}, \, (x,X) = m} \Big\{ \lambda N (x,X)^k + H(x) \Big\}\Big]
= \lambda m^k +  \frac{1}{N} \E \Big[  \max_{x \in \mathbb{S}^{N-1}, \, x_1 = m} H(x) \Big].
$$

Let $x \in \mathbb{S}^{N-1}$ such that $x_1 = m$. Then 
\begin{align*}
	H(x)  \eqdist \sqrt N m^k g_{1, \dots, 1} + \sqrt N \sum_{j=0}^{k-1} \binom{k}{j}^{1/2} m^j \sum_{2 \leq i_1, \dots, i_{k-j} \leq N} g_{i_1, \dots, i_{k-j}} x_{i_1} \dots x_{i_{k-j}},
\end{align*}
where $\big( (g_{i_1, \dots, i_p})_{1 \leq i_1, \dots, i_p \leq N} \big)_{p \leq k}$ are  i.i.d.\\ standard Gaussians.

So that $\E \big[ \max_{x \in \mathbb{S}^{N-1}, \, x_1 = m} H(x) \big] = \E \big[ \max_{x \in \mathbb{S}^{N-2}} H_m(x)\big]$,
where $H_m$ is given by:
$$
H_m(x)
=\sqrt N \sum_{j=0}^{k-1} \binom{k}{j}^{1/2} m^j (1-m^2)^{(k-j)/2}\sum_{1 \leq i_1, \dots, i_{k-j} \leq N-1} g_{i_1, \dots, i_{k-j}} x_{i_1} \dots x_{i_{k-j}}.
$$
The function $H_m$ is a Gaussian process with covariance
\[
	\E\big[ H_m(x)H_m(y)\big] = N \xi_m( (x,y)),
\]
where  $\xi_m$  is given by
\begin{equation}\label{eq:xi-m}
\xi_m(t) 
= \sum_{j=0}^{k-1}\binom{k}{j} m^{2j} (1-m^2)^{k-j} t^{k-j}
=
(m^2 + (1-m^2)t )^k - m^{2k}.
\end{equation}
We conclude using Theorem~\ref{thm:GS} to obtain the result.
\end{proof}

We now observe that for {$m$} large enough, this formula has a particularly simple form. 

\begin{lem}
	For all $|m| \geq \sqrt{1 - \frac{1}{k-1}}$ we have:
	\begin{equation}\label{eq:E}
		E_{\lambda}(m) = \lambda m^k + \sqrt{k(1-m^2)}.
	\end{equation}
\end{lem}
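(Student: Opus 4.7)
Combining with the previous lemma, $E_\lambda(m) = \lambda m^k + \cG(\xi_m, 0)$, so the task reduces to establishing $\cG(\xi_m, 0) = \sqrt{k(1-m^2)}$ for $m^2 \geq 1 - 1/(k-1)$. The plan is to show that the ``replica symmetric'' ansatz $\phi \equiv c^*$ with $c^* = (k(1-m^2))^{-1/2} = (\xi_m'(1))^{-1/2}$ is the optimal test function in $\cC$ in this regime. The crucial structural fact to be exploited is that the functional $P_0$ is convex on the convex set $\cC$: the map $\phi \mapsto 1/\phi$ is convex on $(0,\infty)$ and everything else in $P_0$ is linear in $\phi$. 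Consequently, to prove that $\phi \equiv c^*$ is the global minimizer it suffices to check a first-order optimality condition in every admissible direction.

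For the upper bound, the constant $\phi \equiv c^*$ yields $P_0(c^*) = c^* \int_0^1 \xi_m''(x) dx + 1/c^* + \xi_m'(0)c^* = c^*\xi_m'(1) + 1/c^* = 2\sqrt{k(1-m^2)}$, so $\cG(\xi_m,0) \leq \sqrt{k(1-m^2)}$. For the lower bound, fix an arbitrary $\phi \in \cC$ and set $\psi = \phi - c^*$. A direct Gateaux computation gives
\[
\frac{d}{dt}\bigg|_{t=0}P_0(c^* + t\psi) = \int_0^1 \xi_m''(x)\psi(x)dx - \xi_m'(1)\int_0^1 \psi(x)dx + \xi_m'(0)\psi(0),
\]
using $1/(c^*)^2 = \xi_m'(1)$. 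Two integrations by parts (legitimate because concave $\phi$ is differentiable a.e.\ with derivative of bounded variation) collapse this to
\[
\int_0^1\bigl(\xi_m'(1)x - \xi_m'(x)\bigr)\psi'(x)\,dx.
\]
Since $\phi$ is non-increasing, $\psi' = \phi' \leq 0$ a.e.; thus it remains to show that $\xi_m'(1)x - \xi_m'(x) \leq 0$ on $[0,1]$ under the hypothesis $m^2 \geq 1 - 1/(k-1)$. Once this is established, the product is nonnegative, the directional derivative is $\geq 0$, and by convexity of $P_0$ over $\cC$ the constant $c^*$ is a global minimizer, yielding the matching lower bound.

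The core technical step, which I expect to be the main obstacle, is this polynomial sign analysis, and it is exactly where the threshold $\sqrt{1-1/(k-1)}$ arises. Substituting $u = m^2 + (1-m^2)x \in [m^2,1]$ and using $\xi_m'(t) = k(1-m^2)(m^2+(1-m^2)t)^{k-1}$ rewrites the quantity as $k\, g(u)$ with
\[
g(u) = u - m^2 - (1-m^2)u^{k-1}.
\]
Direct inspection gives $g(1) = 0$ and $g(m^2) = -(1-m^2)m^{2(k-1)} \leq 0$. Its derivative $g'(u) = 1 - (1-m^2)(k-1)u^{k-2}$ is decreasing on $[m^2, 1]$ (for $k \geq 3$), so its minimum on this interval is $g'(1) = 1 - (1-m^2)(k-1)$, which is $\geq 0$ precisely under the hypothesis $m^2 \geq 1 - 1/(k-1)$. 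Hence $g$ is non-decreasing on $[m^2,1]$ and, together with $g(1) = 0$, this forces $g \leq 0$ throughout $[m^2,1]$, i.e., $\xi_m'(1)x - \xi_m'(x) \leq 0$ on $[0,1]$. This completes the argument and also explains the sharpness of the threshold: $g'(1) = 0$ is exactly the breaking point of the replica symmetric ansatz, below which $g$ changes sign and a non-constant (boundary-saturating) minimizer takes over.
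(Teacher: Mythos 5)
Your proof is correct, but it takes a genuinely different route from the paper. The paper disposes of this lemma in two lines by invoking a result already established in the ground-state papers it cites (Chen--Sen and Jagannath--Tobasco): if $\xi'(1)+h^2 \geq \xi''(1)$ then $\cG(\xi,h)=\sqrt{\xi'(1)+h^2}$; it then merely checks that $\xi_m'(1)\geq \xi_m''(1)$ is equivalent to $(k-1)(1-m^2)\leq 1$, i.e.\ $|m|\geq\sqrt{1-\tfrac{1}{k-1}}$, and reads off $E_\lambda(m)=\lambda m^k+\sqrt{\xi_m'(1)}$. You instead reprove this ``replica symmetric'' ground-state formula from scratch for the family $\xi_m$ directly from the variational characterization $\cG(\xi_m,0)=\tfrac12\min_{\phi\in\cC}P_0(\phi)$: the constant $\phi\equiv(\xi_m'(1))^{-1/2}$ gives the upper bound, and convexity of $P_0$ plus a first-order optimality check — reduced by integration by parts to the sign condition $\xi_m'(x)\geq x\,\xi_m'(1)$ on $[0,1]$, which you verify holds exactly when $m^2\geq 1-\tfrac{1}{k-1}$ — gives the matching lower bound. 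Your computations check out (the Gateaux derivative, the boundary cancellations in the integration by parts, and the analysis of $g(u)=u-m^2-(1-m^2)u^{k-1}$ are all correct, and concavity plus continuity of $\phi$ does justify the integration by parts since $\phi'\in L^1$). What the two approaches buy: the paper's argument is shorter and leans on the literature, while yours is self-contained, makes visible exactly where the threshold $\sqrt{1-\tfrac{1}{k-1}}$ comes from (the sign condition degenerating at the right endpoint), and in effect reproduces, in this special case, the criterion the cited works prove in general; note that your sign condition coincides with the cited criterion $\xi_m'(1)\geq\xi_m''(1)$ for this family, so the two proofs identify the same breaking point.
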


\begin{proof}
	In the setting of Theorem~\ref{thm:GS} it was also shown in \cite{jagannath2017low,chen2017parisi} that if $\xi'(1) + h^2 \geq \xi''(1)$ then $\mathcal{G}(\xi,h) = \sqrt{\xi'(1) + h^2}$.
Since
\begin{align*}
	\xi_m'(t) &= k (1-m^2)(m^2 + (1-m^2)t )^{k-1}
	\\
	\xi_m''(t) &= k(k-1) (1-m^2)^2(m^2 + (1-m^2)t )^{k-2},
\end{align*}
the condition  $\xi_m'(1) \geq \xi_m''(1)$ corresponds to $(k-1) (1-m^2) \leq 1$, i.e.\ $|m| \geq \sqrt{1 - \frac{1}{k-1}}$. When this holds, we get that
$$
E_{\lambda}(m) = \lambda m^k + \cG(\xi_m,0)=\sqrt{\xi_m'(1)} = \lambda m^k + \sqrt{k (1-m^2)}
$$
by \eqref{eq:E-rs}.
\end{proof}

We end with the desired explicit formula for $E_\lambda(\sqrt q_s(\lambda))$. 
\begin{lem}\label{lem:up_qs}
	For all $\lambda > \lambda_s$, $\sqrt{q_s(\lambda)}$ is a local maximizer of $E_{\lambda}$ and if we write $x(\lambda) = \lambda^2 k q_s^{k-1}(\lambda)$,
	$$
	E_{\lambda}\big(\sqrt{q_s(\lambda)}\big) =  \frac{\sqrt{k}}{\sqrt{1 + x(\lambda)}} \Big(1 + \frac{x(\lambda)}{k}\Big).
	$$
\end{lem}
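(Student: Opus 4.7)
The plan is to invoke the explicit formula from the preceding lemma and reduce everything to one-variable calculus. Since Lemma~\ref{lem:gauss_scalar} gives $\sqrt{q_s(\lambda)} > \sqrt{1 - 1/(k-1)}$ for $\lambda > \lambda_s$, the identity $E_\lambda(m) = \lambda m^k + \sqrt{k(1-m^2)}$ holds on an open neighborhood of $m = \sqrt{q_s(\lambda)}$, and the three assertions of the lemma (critical point, strict local max, explicit value) become direct computations on this neighborhood.

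For the critical point check, differentiating gives
$$E_\lambda'(m) = k\lambda m^{k-1} - \frac{\sqrt{k}\, m}{\sqrt{1-m^2}},$$
and setting this to zero at $m = \sqrt q$ reduces, after dividing by $m$, to $k\lambda^2 q^{k-2}(1-q) = 1$, which is equivalent (multiply by $q$) to the defining equation $f_\lambda'(q) = 0$ of $q_s(\lambda)$. Differentiating once more yields
$$E_\lambda''(m) = k(k-1)\lambda m^{k-2} - \frac{\sqrt{k}}{(1-m^2)^{3/2}},$$
and substituting the critical-point identity $\sqrt{k}\lambda m^{k-2} = (1-m^2)^{-1/2}$ collapses this at $m = \sqrt{q_s}$ to $\sqrt{k}(1-q_s)^{-3/2}\bigl[(k-1)(1-q_s) - 1\bigr]$, which is strictly negative exactly because $q_s > 1 - 1/(k-1)$. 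Thus $\sqrt{q_s(\lambda)}$ is a strict local maximizer of $E_\lambda$.

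For the explicit value, plug $m = \sqrt{q_s}$ into the formula to obtain $E_\lambda(\sqrt{q_s}) = \lambda q_s^{k/2} + \sqrt{k(1-q_s)}$. Combining the critical equation $k\lambda^2 q_s^{k-2}(1-q_s) = 1$ with the definition $x(\lambda) = k\lambda^2 q_s^{k-1}$ gives $x(\lambda) = q_s/(1-q_s)$, hence $1-q_s = 1/(1+x(\lambda))$ and $q_s = x(\lambda)/(1+x(\lambda))$. The same critical equation rearranges to $\lambda q_s^{(k-2)/2} = \sqrt{(1+x(\lambda))/k}$, so $\lambda q_s^{k/2} = q_s\sqrt{(1+x(\lambda))/k} = x(\lambda)/\sqrt{k(1+x(\lambda))}$. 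Substituting both pieces back yields
$$E_\lambda(\sqrt{q_s(\lambda)}) = \frac{x(\lambda)}{\sqrt{k(1+x(\lambda))}} + \sqrt{\frac{k}{1+x(\lambda)}} = \frac{\sqrt{k}}{\sqrt{1+x(\lambda)}}\Bigl(1 + \frac{x(\lambda)}{k}\Bigr),$$
as claimed. The only real obstacle is algebraic bookkeeping; conceptually everything is handed to us by the explicit form of $E_\lambda$ (valid precisely in the large-$m$ regime we are in) and by the fact, supplied by Lemma~\ref{lem:gauss_scalar}, that $q_s(\lambda)$ is a strict local max of $f_\lambda$ lying above the threshold $1 - 1/(k-1)$.
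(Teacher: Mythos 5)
Your proof is correct and follows essentially the same route as the paper: both work from the explicit formula $E_{\lambda}(m)=\lambda m^k+\sqrt{k(1-m^2)}$ valid for $m\geq\sqrt{1-\frac{1}{k-1}}$, identify the critical equation at $m=\sqrt{q_s(\lambda)}$ with the equation $h(q)=0$ characterizing $q_s$ in Lemma~\ref{lem:gauss_scalar}, and obtain the closed-form value via $q_s=\frac{x(\lambda)}{1+x(\lambda)}$. The only (cosmetic) difference is that you certify strict local maximality by a second-derivative test using $q_s>\frac{k-2}{k-1}$, whereas the paper factors $E_{\lambda}'$ to show that $m^2\mapsto E_{\lambda}(m)$ has the same monotonicity as $f_{\lambda}$ and imports the local maximality of $q_s$ directly.
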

\begin{proof}
	Differentiating the expression \eqref{eq:E} for $m \geq \sqrt{1 - \frac{1}{k-1}}$ yields
	\begin{align*}
		E_{\lambda}'(m) 
&= \lambda k m^{k-1} - \frac{\sqrt{k}m}{\sqrt{1-m^2}}
= 
k \frac{1}{1-m^2}
\Big(\lambda k m^{k-1} + \frac{\sqrt{k}m}{\sqrt{1-m^2}} \Big)^{-1}
\Big(\lambda^2 k m^{2k-2} - \lambda^2 k m^{2k} - m^2 \Big)
	\end{align*}
	so that the functions $\phi_{\lambda}$, $f_{\lambda}$ and $m^2 \mapsto E_{\lambda}(m)$ have precisely the same monoticity on $[1 - \frac{1}{k-1},1)$ (recall the expression of the derivatives $f_{\lambda}'$ and $\phi_{\lambda}'$ given by \eqref{eq:der_phi0}). Lemma~\ref{lem:gauss_scalar} gives that $q_s(\lambda)$ is a local maximum of $f_{\lambda}$ and $\phi_{\lambda}$ for $\lambda > \lambda_s$, $\sqrt{q_s(\lambda)}$ is therefore a local maximum of $E_{\lambda}$.

	Let us now compute $E_{\lambda}(\sqrt{q_s(\lambda)})$.
	By Lemma~\ref{lem:gauss_scalar}, $q_s(\lambda) = \frac{x(\lambda)}{1+x(\lambda)}$. Consequently,
	\begin{align*}
		E_{\lambda}(q_s(\lambda)^{1/2}) 
	&= \lambda q_s(\lambda)^{k/2} + \sqrt{k(1 - q_s(\lambda))}
	= \frac{\sqrt{k}}{\sqrt{1 + x(\lambda)}} \Big(1 + \frac{x(\lambda)}{k}\Big).\qedhere
	\end{align*}
\end{proof}

\subsection{The upper bound}

We prove here the upper bound. 
\begin{lem}\label{lem:upper_bound}
	For all $\lambda \geq 0$,
	\begin{equation}\label{eq:upper_bound}
	\limsup_{N \to \infty} \E\Big[\frac{1}{N}\max_{x \in \mathbb{S}^{N-1}} H_{\lambda}(x) \Big] \leq {\rm GS}_k + \int_0^\lambda q_*(t)^{k/2}\, dt.
 	\end{equation}
\end{lem}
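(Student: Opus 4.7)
The plan is to combine three ingredients: a Fenchel-type identity writing $\frac{1}{N}\max_x H_{\lambda}(x)$ as the pure-noise ground state plus an integral of correlations in $t \in [0,\lambda]$; the variational formula of Theorem~\ref{thm:GS} for the pure-noise ground state; and the information-theoretic bound on correlation (Corollary~\ref{cor:upper_IT}, which is the core input from \cite{lesieur2017statistical}).

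First I would establish the identity. The map $f_N : \lambda \mapsto \max_{x \in \mathbb{S}^{N-1}} H_{\lambda}(x) = \max_x \{H(x) + \lambda N (x,X)^k\}$ is a pointwise supremum of functions that are affine in $\lambda$, so it is convex and locally Lipschitz in $\lambda$, and in particular absolutely continuous. By Proposition~\ref{prop:unique_ML}, for each fixed $\lambda > 0$ the argmax $\what{x}^{\rm ML}_{\lambda}$ is almost surely unique up to sign (so $(\what{x}^{\rm ML}_{\lambda},X)^k$ is unambiguous), hence the envelope theorem gives $f_N'(\lambda) = N (\what{x}^{\rm ML}_{\lambda}, X)^k$ almost surely at each such $\lambda$. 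Integrating and applying Fubini yields
\[
\E \frac{1}{N}\max_x H_{\lambda}(x) \;=\; \E \frac{1}{N}\max_x H(x) \;+\; \int_0^\lambda \E\,(\what{x}^{\rm ML}_{t}, X)^k \, dt.
\]

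Next I would handle the two terms. The pure-noise term is covered by Theorem~\ref{thm:GS} applied with $h=0$ and $\xi(t) = t^k$: a direct evaluation of $\mathcal{G}(t^k,0)$ (via the Crisanti--Sommers reduction and a one-parameter minimization) gives the first-order condition $\varphi_k(z)=0$, whose unique positive root is $z_k$, and substituting back yields $\mathcal{G}(t^k,0) = {\rm GS}_k$ as defined in \eqref{eq:def_GS}. Hence $\E \frac{1}{N}\max_x H(x) \to {\rm GS}_k$. For the integrand, Corollary~\ref{cor:upper_IT} gives that for every $t \geq 0$ and every measurable estimator $\hat{x}(Y)$ one has $|(\hat{x},X)|^2 \leq q_*(t) + o_{\prob}(1)$; combined with the deterministic bound $|(\hat{x},X)| \leq 1$, bounded convergence upgrades this to $\limsup_N \E |(\what{x}^{\rm ML}_t,X)|^k \leq q_*(t)^{k/2}$ for every $t \in [0,\lambda]$ (using that for $k$ even this equals $\E(\what{x}^{\rm ML}_t,X)^k$; for $k$ odd the argmax satisfies $(\what{x}^{\rm ML}_t,X) \geq 0$ almost surely for $t>0$ since flipping signs strictly decreases $H_t$). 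Finally, the uniform bound $|(\what{x}^{\rm ML}_t,X)^k| \leq 1$ on $[0,\lambda]$ allows the reverse Fatou lemma to move $\limsup$ inside the integral, producing
\[
\limsup_N \int_0^\lambda \E\,(\what{x}^{\rm ML}_t,X)^k\, dt \;\leq\; \int_0^\lambda q_*(t)^{k/2}\, dt,
\]
which combined with the identity above yields \eqref{eq:upper_bound}.

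The main obstacle is the information-theoretic bound step: one needs a sharp in-probability (or tail) upper bound on $|(\hat x,X)|$ for every $t\in[0,\lambda]$, not merely at $t=\lambda$ and not merely in $L^2$, since an $L^2$ bound $\E(\hat x,X)^2 \leq q_*(t)$ alone only gives $\limsup \E(\hat x,X)^k \leq q_*(t)$ (too weak, e.g., when $(\hat x,X)^2$ is approximately Bernoulli) rather than the required $q_*(t)^{k/2}$. Extracting the in-probability version from \cite{lesieur2017statistical} requires a standard approximation argument from the product-prior (for which \cite{lesieur2017statistical} is written) to the uniform spherical prior considered here, which is the role of Proposition~\ref{prop:IT} referenced in the introduction.
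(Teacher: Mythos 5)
Your proposal is correct and follows essentially the same route as the paper: differentiate $\E\big[\frac{1}{N}\max_{x}H_{\lambda}(x)\big]$ in $\lambda$ via Proposition~\ref{prop:unique_ML} and the Milgrom--Segal envelope theorem to get $\E\big[(\what{x}^{\rm ML}_{\lambda},X)^k\big]$, identify the $\lambda=0$ term with ${\rm GS}_k$ by Theorem~\ref{thm:GS}, and combine the reverse Fatou lemma with Corollary~\ref{cor:upper_IT} (which rests on Proposition~\ref{prop:IT}) to bound the integrand by $q_*(t)^{k/2}$. The only quibble is your aside for odd $k$ --- the claim that $(\what{x}^{\rm ML}_t,X)\geq 0$ almost surely does not follow from the sign-flip argument, since flipping $x$ negates all of $H_t$, not just the signal term --- but it is also unnecessary, because Corollary~\ref{cor:upper_IT} bounds $\E\big[(\what{x}(Y),X)^k\big]$ directly for every measurable estimator and every $t\neq\lambda_c$, which is all the integral requires.
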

We defer the proof of this momentarily to observe the following information-theoretic bounds which will be useful
in its proof. 
\begin{prop}\label{prop:IT}
	Assume that $X$ is uniformly distributed over $\bS^{N-1}$, independently from $W$. Then for all $\lambda \in (0, +\infty) \setminus \{\lambda_c \}$
	$$
	\lim_{N \to \infty}
	\E \Big[
		\Big\| X^{\otimes k} - \E\big[ X^{\otimes k} \big| Y \big] \Big\|^2
	\Big]
	= 
1 - q_*(\lambda)^k.
	$$
\end{prop}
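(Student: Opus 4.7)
The plan is to reduce the MMSE to a posterior overlap, invoke the asymptotic overlap concentration established in \cite{lesieur2017statistical,barbier2017stochastic} for i.i.d.\ product priors, and then transfer the result to the uniform spherical prior via a coupling argument.

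Since $\|X^{\otimes k}\|^2=1$ identically, the Pythagorean identity for conditional expectation gives
\[
\E\bigl\|X^{\otimes k}-\E[X^{\otimes k}\mid Y]\bigr\|^2 = 1 - \E\bigl\|\E[X^{\otimes k}\mid Y]\bigr\|^2,
\]
and the Nishimori identity rewrites the second term as $\E[(X^{(1)},X^{(2)})^k]$, where $X^{(1)},X^{(2)}$ are conditionally i.i.d.\ samples from the posterior given $Y$. So it suffices to show that the posterior overlap concentrates on $q_*(\lambda)$ in the sense that $\E[(X^{(1)},X^{(2)})^k]\to q_*(\lambda)^k$ whenever $\lambda\neq\lambda_c$.

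For product priors of the form $\mu_0^{\otimes N}$ with suitable moments, \cite{lesieur2017statistical,barbier2017stochastic} give the limit of the normalized mutual information $\tfrac1N I(X;Y)$ as an explicit scalar variational formula, from which the limiting overlap is read off as the (generically unique) maximizer of the corresponding replica-symmetric potential. Specializing to $\mu_0=\mathcal{N}(0,1/N)$, a direct algebraic comparison identifies this potential, up to additive constants, with the function $f_{\lambda}$ from \eqref{eq:f-lambda-def}, so the Gaussian-prior overlap converges to $q_*(\lambda)^k$ at every continuity point of $q_*$. To transfer this to the spherical setting, I would use the coupling $X=G/\|G\|$ with $G\sim\mathcal{N}(0,I_N/N)$, which makes $X$ uniform on $\mathbb{S}^{N-1}$. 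Rewriting the spike as $\lambda\sqrt N\,X^{\otimes k} = (\lambda/\|G\|^k)\sqrt N\,G^{\otimes k}$ shows that the spherical observation model is obtained from the Gaussian one by replacing $\lambda$ with the random scalar $\lambda/\|G\|^k\to\lambda$ almost surely; conditioning on the high-probability event $\|G\|^k\in(1-\eps,1+\eps)$, using monotonicity of the MMSE in the effective signal-to-noise ratio, and sending $\eps\to 0$ then transfers the Gaussian-prior limit to the spherical-prior limit.

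The main obstacle is carrying out this last coupling step cleanly: one has to compare the posteriors (or equivalently the free energies) across two distinct data-generating processes and ensure that the error terms vanish uniformly in the approximation. The exception at $\lambda=\lambda_c$ is essential, since $q_*$ has its unique jump discontinuity there, so any approximation argument based on continuity of the limiting MMSE in the signal-to-noise ratio necessarily fails precisely at that point.
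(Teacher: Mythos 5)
Your reduction via Pythagoras and the Nishimori identity is fine, and your identification of the Gaussian-prior ($\mathcal{N}(0,\tfrac1N Id)$) replica-symmetric potential with $f_\lambda$ is exactly what the paper also uses (its Lemma~\ref{lem:gauss_scalar} shows $\max f_\lambda=\max\phi_\lambda$). The genuine gap is the transfer step from the Gaussian prior to the spherical prior. Writing $\lambda\sqrt N X^{\tensor k}=(\lambda/\|G\|^k)\sqrt N G^{\tensor k}$ only says the two \emph{data} distributions are close; the quantity you must control, $\E[X^{\tensor k}\mid Y]$, depends on the prior through the posterior, and an MMSE is not stable under replacing one prior by a nearby one unless you prove a quantitative continuity statement. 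In particular, ``monotonicity of the MMSE in the effective signal-to-noise ratio'' is a statement about a \emph{fixed} prior and a degraded channel; it cannot by itself compare the spherical-prior MMSE for $X^{\tensor k}$ with the Gaussian-prior MMSE for $G^{\tensor k}$, and conditioning on the event $\|G\|^k\in(1-\eps,1+\eps)$ changes the prior (it becomes a Gaussian restricted to a shell) as well as the estimand. The most you get cheaply is one-sided: revealing $\|G\|$ as side information can only decrease the Gaussian-prior MMSE, which, combined with monotonicity and concentration of $\|G\|$, bounds $\limsup_N$ of the spherical MMSE by $1-q_*(\lambda)^k$; the matching lower bound (that hiding the radius does not help asymptotically) is precisely the part your sketch leaves open, and it is not a routine $\eps$-argument.

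The paper closes exactly this gap by working at the level of free energies rather than MMSEs: the free energy is Lipschitz in the Wasserstein-$2$ distance between the priors on the tensor (Proposition~A.1 of \cite{miolane2018phase}), the coupling $X^{\tensor k}$ versus $(\|G\|X/\sqrt N)^{\tensor k}$ shows this distance vanishes, and then the MMSE is recovered from the free energy via the I-MMSE identity together with convexity in $\gamma=\lambda^2$ (Griffith's lemma), using the envelope theorem to differentiate the limit. This last step is also where the exclusion $\lambda\neq\lambda_c$ genuinely enters: one needs differentiability of the limiting free energy in $\gamma$, not merely continuity of $q_*$. If you want to keep your MMSE-level coupling, you would need to supply the missing direction, e.g.\ by showing that the posterior on $\|G\|$ concentrates so that revealing it changes the MMSE by $o(1)$ — which in practice again amounts to a free-energy comparison of the type the paper performs.
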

This result follows from \cite{lesieur2017statistical,barbier2017stochastic} by approximating the uniform measure on $\mathbb S^N$ by an i.i.d.\ Gaussian measure.  
For the completeness, we provide a proof in \prettyref{app:proof-prop-IT}. As a consequence of this, we have the following.
\begin{cor}\label{cor:upper_IT}
	Assume that $X$ is uniformly distributed over $\bS^{N-1}$, independently from $W$.
	Then for all measurable functions $\what{x}: (\R^N)^{\otimes k} \to \mathbb{S}^{N-1}$ and for all $\lambda \neq \lambda_c$ we have
	$$
	\limsup_{N \to \infty} \E \Big[ \big( \what{x}(Y), X \big)^k  \Big] \leq q_*(\lambda)^{k/2}.
	$$
\end{cor}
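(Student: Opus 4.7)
The plan is to derive the bound as a fairly direct consequence of the MMSE computation in \prettyref{prop:IT} combined with a Cauchy--Schwarz argument. The starting observation is that for any unit vectors $u,v\in\mathbb S^{N-1}$, one has the tensor identity $(u,v)^k=\langle u^{\tensor k},v^{\tensor k}\rangle$, where the inner product on the right is the Frobenius/Hilbert--Schmidt inner product on $(\R^N)^{\tensor k}$. Therefore, for any measurable $\what x:(\R^N)^{\tensor k}\to\mathbb S^{N-1}$,
\begin{equation*}
	\E\big[(\what x(Y),X)^k\big] \;=\; \E\big[\langle \what x(Y)^{\tensor k},X^{\tensor k}\rangle\big].
\end{equation*}

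The next step is to condition on $Y$ and pull the $Y$-measurable factor out of the inner expectation, writing
\begin{equation*}
	\E\big[\langle \what x(Y)^{\tensor k},X^{\tensor k}\rangle\big]
	\;=\; \E\big[\langle \what x(Y)^{\tensor k},\E[X^{\tensor k}\,|\,Y]\rangle\big].
\end{equation*}
Applying Cauchy--Schwarz inside the expectation, then using $\|\what x(Y)^{\tensor k}\|=1$, followed by Jensen's inequality for the square root, yields
\begin{equation*}
	\E\big[(\what x(Y),X)^k\big]
	\;\leq\; \E\big\|\E[X^{\tensor k}\,|\,Y]\big\|
	\;\leq\; \sqrt{\E\big\|\E[X^{\tensor k}\,|\,Y]\big\|^2}.
\end{equation*}

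To control this last quantity I would use the Pythagorean identity for the $L^2$ projection: since $\E[X^{\tensor k}\,|\,Y]$ is the orthogonal projection of $X^{\tensor k}$ onto the $\sigma(Y)$-measurable $L^2$ functions,
\begin{equation*}
	\E\big\|X^{\tensor k}\big\|^2
	\;=\; \E\big\|X^{\tensor k}-\E[X^{\tensor k}\,|\,Y]\big\|^2+\E\big\|\E[X^{\tensor k}\,|\,Y]\big\|^2.
\end{equation*}
Since $\|X^{\tensor k}\|=1$, \prettyref{prop:IT} immediately gives $\E\|\E[X^{\tensor k}\,|\,Y]\|^2\to q_*(\lambda)^k$ for every $\lambda\neq\lambda_c$. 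Combining the three displayed inequalities and taking $\limsup$ yields the claimed bound $\limsup_N \E[(\what x(Y),X)^k]\leq q_*(\lambda)^{k/2}$.

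There is no real obstacle here: the content of the corollary is essentially packaged in \prettyref{prop:IT}, and the only point that warrants care is the reduction from the correlation $(\what x,X)^k$ to the Frobenius inner product of tensors so that conditional expectation can be applied. One mild sanity check is that the bound is consistent with the lower bound achievable by $\what x^{\rm ML}_\lambda$ in \prettyref{thm:max_likelihood}, which in turn justifies the claim made in the text that the maximum likelihood estimator achieves the maximal correlation.
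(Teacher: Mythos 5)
Your proof is correct, and it takes a slightly different route than the paper, though both rest entirely on Proposition~\ref{prop:IT}. The paper expands the squared error of the specific rescaled estimator $\big(\sqrt{q_*(\lambda)}\,\what{x}(Y)\big)^{\tensor k}$, namely $\E\|X^{\tensor k}-(\sqrt{q_*}\,\what{x}(Y))^{\tensor k}\|^2=1+q_*^k-2q_*^{k/2}\,\E[(\what{x}(Y),X)^k]$, and lower bounds the left side by the MMSE (optimality of the posterior mean) to extract the claimed inequality. You instead condition on $Y$, bound $\E[\langle \what{x}(Y)^{\tensor k},\E[X^{\tensor k}\mid Y]\rangle]$ by Cauchy--Schwarz and Jensen, and convert $\E\|\E[X^{\tensor k}\mid Y]\|^2$ into $1-\mathrm{MMSE}\to q_*(\lambda)^k$ via the Pythagorean identity for the $L^2$ projection. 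These are essentially the same mechanism: optimizing the paper's comparison over the scaling constant $c$ in $(c\,\what{x}(Y))^{\tensor k}$ reproduces exactly your Cauchy--Schwarz bound. Your version has one small advantage: when $\lambda<\lambda_c$ (so $q_*(\lambda)=0$) the paper's displayed inequality with the choice $c=\sqrt{q_*}$ degenerates to $1\geq 1$ and yields no information, whereas your chain still gives $\limsup_N\E[(\what{x}(Y),X)^k]\leq 0$ directly; so your argument covers the subcritical case without the extra (easy) limiting step in $c$ that the paper's proof implicitly requires.
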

\begin{proof}
	Compute
	\begin{align*}
		 \E \Big[\Big\| X^{\otimes k} - \big(\sqrt{q_*(\lambda)} \what{x}(Y)\big)^{\otimes k} \Big\|^2 \Big]
		&=
		\E \big[\big\| X^{\otimes k}\big\|^2 \big]
		+
		q_*(\lambda)^{k} \E \big[\big\| \what{x}(Y)^{\otimes k}\big\|^2 \big]
		- 2 q_*(\lambda)^{k/2} \E \Big[ \big( \what{x}(Y), X \big)^k \Big]
		\\
		&= 1 + q_*(\lambda)^{k}
		- 2 q_*(\lambda)^{k/2}\E \Big[ \big( \what{x}(Y), X \big)^k \Big]. 
	\end{align*}
	{Recall that the posterior mean, $\E( X^{\tensor k}\vert Y)$, uniquely 
	achieves the minimal mean squared error over all square-integrable tensor-valued estimators, $\what{T}(Y)$,  for $X^{\tensor k}$.}
	The proposition follows then from Proposition~\ref{prop:IT} which gives
	$$
	\liminf_{N \to \infty}
	\E \Big[\Big\| X^{\otimes k} - \big(\sqrt{q_*(\lambda)} \what{x}(Y)\big)^{\otimes k} \Big\|^2 \Big]
	\geq \liminf_{N \to \infty}
	\E \Big[
		\Big\| X^{\otimes k} - \E\big[ X^{\otimes k} \big| Y \big] \Big\|^2
	\Big]
	= 1 - q_*(\lambda)^k.\qedhere
	$$
\end{proof}

With this in hand we may now prove \prettyref{lem:upper_bound}.

\begin{proof}[\bf Proof  of \prettyref{lem:upper_bound}]
	By Proposition~\ref{prop:unique_ML} and an application of an envelope-type theorem (see, e.g., Proposition~\ref{prop:envelope_compact}),  {the map} $\lambda \mapsto \frac{1}{N}\E \big[\max_{\mathbb{S}^{N-1}} H_{\lambda}(x) \big]$ {is differentiable for $\lambda\geq 0$}, with derivative
	\begin{equation}\label{eq:M_N-diff}
	\frac{\partial}{\partial \lambda} \E\Big[\frac{1}{N} \max_{x \in \mathbb{S}^{N-1}} H_{\lambda}(x)\Big]
	=\E \Big[
		{(\what{x}_{\lambda}^{\rm ML}, X )^k
	\Big].}
	\end{equation}
By \cite{jagannath2017low,chen2017parisi} we know that $\frac{1}{N} \E [\max_{\mathbb{S}^{N-1}} H_0\big] \rightarrow {\rm GS}_k$.
	The reverse Fatou lemma gives then
	\begin{align*}
		\limsup_{N \to \infty} \E\Big[\frac{1}{N} \max_{x \in \mathbb{S}^{N-1}} H_{\lambda}(x)\Big]
		\leq  \int_0^{\lambda} \limsup_{N \to \infty}\E \Big[
	\big( \what{x}_{\gamma}^{\rm ML}, X \big)^k
\Big]\, d\gamma + {\rm GS}_k \leq \int_0^{\lambda} q_*(\gamma)^{k/2}\, d\gamma + {\rm GS}_k,
\end{align*}
where the second inequality follows from \prettyref{cor:upper_IT}.
\end{proof}

\subsection{Proof of first part of Theorem~\ref{thm:max_likelihood} }\label{sec:proof_th_max_likelihood}

By an elementary but tedious calculation (see \mbox{\prettyref{lem:key_identities}}) the right sides of \eqref{eq:lower_bound} and \eqref{eq:upper_bound} are equal for $\lambda\geq \lambda_c$ 
(recall that $q_*(\lambda)=q_s(\lambda)$ for such $\lambda$).
Thus for all $\lambda > \lambda_c$,
\begin{equation}\label{eq:cv_up}
\E\Big[\frac{1}{N}\max_{x \in \mathbb{S}^{N-1}} H_{\lambda}(x) \Big] \xrightarrow[N \to \infty]{} \sqrt{k}\frac{1 + \lambda^2 q_*(\lambda)^{k-1}}{\sqrt{1 + \lambda^2 k q_*(\lambda)^{k-1}}}.
\end{equation}

We will now prove that for $\lambda \leq \lambda_c$, $\widebar{M}_N(\lambda) \xrightarrow[N \to \infty]{} {\rm GS}_k$, where $\widebar{M}_N(\lambda)$ is defined by
\[
\widebar{M}_N (\lambda) =  \E \left[\frac{1}{N} \max_{x \in \mathbb{S}^{N-1}} H_{\lambda}(x)\right].
\]
Notice that $\widebar{M}_N(\lambda)$ is convex as an expectation of a maximum of linear functions. By \eqref{eq:M_N-diff}, it follows that
 $\widebar{M}_N'(0^+)\geq 0$. (When $k$ is odd, we use rotational invariance to see that it is in fact zero.)
Consequently, $\widebar{M}_N$ is non-decreasing on $[0,+\infty)$. 

By \cite{jagannath2017low,chen2017parisi} (see  \prettyref{thm:GS}), $\lim_{N \to \infty} \widebar{M}_N(0) = {\rm GS}_k$.  By \eqref{eq:cv_up} and Lemma~\ref{lem:key_identities}, \begin{equation*}\lim_{\lambda \to \lambda_c^+} \lim_{N \to \infty} \widebar{M}_N(\lambda) = {\rm GS}_k.\end{equation*} Consequently, we obtain that for all $\lambda \in [0,\lambda_c]$, $\lim_{N \to \infty} \widebar{M}_N(\lambda) = {\rm GS}_k$.

The almost sure convergence of \eqref{eq:lim_max_likelihood} follows then from the convergence of the expectation $\widebar{M}_N(\lambda)$, combined with Borell's inequality for suprema of Gaussian processes (see for instance  \cite[Theorem 5.8]{boucheron2013concentration}) and the Borel-Cantelli Lemma.\qed

\begin{rem}\label{rem:rigorous}
By \eqref{eq:E-rs} , $E_\lambda$ is given by a variational problem over the space $\cC$. 
We first observe that one can easily solve this variational problem numerically due to the following simple reductions.
First note that if we let $\xi_m$ be as in \eqref{eq:xi-m}, then $(1/\sqrt{\xi_m''})''$  is strictly positive,  where the prime denotes differentiation in $t$. Thus by \cite[Theorem 1.2.4]{jagannath2017low}, the minimizer $\phi$ must be of the form $\phi(s)=\int_s^1 d\nu_s$ where $\nu_s = \theta_1 \delta_a + \theta_2 \delta_b$, where $a,b\in [0,1]$ and $\theta_i\geq 0$. 
Thus the variational problem \eqref{eq:E-rs} is a variational problem over 4 parameters which can be solved numerically. 
These observations then rigorously justify the starting point of the discussion in \cite[Section 4]{BBCR18}, namely the ``RS" and ``1RSB" calculation in \cite[Sect. 4.B]{BBCR18} in the regime they analyize, called the ``$T=0$" regime there. We refer the reader there for a more in-depth discussion, see \cite[Sect. 4.C]{BBCR18}. 
\end{rem}

\subsection{Proof of second part of Theorem~\ref{thm:max_likelihood}}\label{sec:proof_cor_max_likelihood}
We now turn to the second part of \prettyref{thm:max_likelihood}, namely \eqref{eq:correlation}.
Let $M_N(\lambda)$ denote
\[
M_N(\lambda) = \frac{1}{N} \max_{x \in \mathbb{S}^{N-1}} H_{\lambda}(x).
\]
Fix $\lambda > 0$.
By \eqref{eq:lim_max_likelihood} and Lemma~\ref{lem:key_identities}
\[
\lim_{N\to\infty}M_N(\lambda)  = \ell(\lambda) = 
\begin{cases}
	{\rm GS}_k &  \text{if} \quad \lambda \leq \lambda_c \\
	{\rm GS}_k + \int_{0}^{\lambda} q_*(\gamma)^{k/2} \, d\gamma & \text{if} \quad \lambda > \lambda_c.
\end{cases}
\]
Let $\lambda \in (0, +\infty) \setminus \{ \lambda_c \}$. 
By  Proposition~\ref{prop:unique_ML} and the Milgrom-Segal envelope theorem (see \prettyref{prop:envelope_compact}), $M_N$ is  differentiable in $\lambda$ with derivative
$$
M_N'(\lambda) = \big(\what{x}^{\rm ML}_{\lambda}, X  \big)^k,
$$
almost surely. 
As $M_N$ is convex in $\lambda$ (it is a maximum of linear functions), we see that
for any $0<h<\lambda$,
$$
\frac{M_N(\lambda - h) - M_N(\lambda)}{h} \leq M_N'(\lambda) \leq \frac{M_N(\lambda + h) - M_N(\lambda)}{h}.
$$
By taking the $N \to \infty$ limit, we get that almost surely
\begin{equation}\label{eq:limsup_as}
	\frac{\ell(\lambda - h) - \ell(\lambda)}{h} \leq 
	\liminf_{N \to \infty} ( \what{x}^{\rm ML}_{\lambda}, X )^k
	\leq
	\limsup_{N \to \infty} ( \what{x}^{\rm ML}_{\lambda}, X )^k
	\leq \frac{\ell(\lambda + h) - \ell(\lambda)}{h}.
\end{equation}
Since $\ell$ is differentiable for $\lambda \neq \lambda_c$, we may take $h\to0$ to obtain
$
\lim_{N \to \infty}
( \what{x}^{\rm ML}_{\lambda}, X )^k = q_*(\lambda)^{k/2}
$
almost surely, which proves \eqref{eq:correlation}.\qed

\begin{appendices}
	\section{Appendix}\label{sec:appendix}
	\subsection{Uniqueness of minimizers and envelope theorems}\label{app:elementary}

	This section gathers some basic lemmas that will be useful for the analysis.
	\begin{prop}\label{prop:unique_ML} Recall the definition \eqref{eq:def_Hl} of $H_{\lambda}$. We have the following
		\begin{itemize}
			\item If $k$ is odd, then $H_{\lambda}$ has almost surely one unique maximizer over $\mathbb{S}^{N-1}$.
			\item If $k$ is even, then $H_{\lambda}$ has almost surely two maximizers over $\mathbb{S}^{N-1}$, $x^*$ and $-x^*$.
		\end{itemize}
	\end{prop}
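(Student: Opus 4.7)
The plan is to reduce the statement to showing that, conditionally on $X$, the random polynomial $H_\lambda$ is almost surely a Morse function on $\mathbb{S}^{N-1}$ whose critical values are all distinct modulo the exact $\mathbb{Z}_2$ symmetry $H_\lambda(-x) = H_\lambda(x)$ that appears for even $k$. Since a Morse function on a compact manifold has finitely many critical points, this pins down the argmax up to the symmetry, yielding a unique maximizer for $k$ odd and an antipodal pair $\{x^*, -x^*\}$ for $k$ even. The $X$-dependent term in $H_\lambda$ is deterministic given $X$, so all arguments may be run conditionally on $X$.

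I would first establish the Morse property. The Gaussian part of $H_\lambda$ has smooth, strictly positive-definite covariance kernel $N(x,y)^k$ on $\mathbb{S}^{N-1}$, so at each fixed $x$ the joint law of $(\nabla_{\mathbb{S}^{N-1}} H_\lambda(x), \nabla^2_{\mathbb{S}^{N-1}} H_\lambda(x))$ is a smooth, non-degenerate Gaussian. A Bulinskaya/Kac--Rice argument on the compact sphere then yields that almost surely no critical point is degenerate, hence the critical set is finite and the argmax is a non-empty finite subset of $\mathbb{S}^{N-1}$. For the second step, I would parameterize each critical point locally as a smooth function $W \mapsto c_i(W)$ via the implicit function theorem applied to the Lagrange equation $\nabla_{\mathbb{S}^{N-1}} H_\lambda = 0$. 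By the envelope identity (the Lagrange term is tangential to the sphere and therefore orthogonal to $c_i'(W)$), the directional derivative of $H_\lambda(c_i(W)) - H_\lambda(c_j(W))$ in the $V$-direction on $(\mathbb{R}^N)^{\otimes k}$ equals $\sqrt{N}\langle V, c_i^{\otimes k} - c_j^{\otimes k}\rangle$. Choosing $V = c_i^{\otimes k} - c_j^{\otimes k}$ gives $2\sqrt{N}(1 - (c_i, c_j)^k)$, which is strictly positive unless $c_i^{\otimes k} = c_j^{\otimes k}$, i.e.\ $c_i = c_j$ (odd $k$) or $c_i = \pm c_j$ (even $k$). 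Locally, then, $W \mapsto H_\lambda(c_i(W)) - H_\lambda(c_j(W))$ is a non-constant real-analytic function, so its zero set has Lebesgue, and hence Gaussian, measure zero.

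The main obstacle is globalizing the second step: the critical points depend on $W$ only locally smoothly, so the local measure-zero statements must be patched using the a.s.\ finiteness of critical points from the first step together with a countable cover of the full-measure Morse stratum in the (infinite-dimensional) Gaussian space of tensors $W$. A cleaner alternative would be a direct Bulinskaya argument on the joint Gaussian field $(\nabla H_\lambda(x), \nabla H_\lambda(y), H_\lambda(x) - H_\lambda(y))$ over the open product $\{(x,y) \in (\mathbb{S}^{N-1})^2 : x \neq \pm y\}$ (or $x \neq y$ for odd $k$), using the non-degeneracy of this joint law off the diagonal and antidiagonal. In either case, for even $k$ the conclusion then follows because the exact $\mathbb{Z}_2$ symmetry forces any maximizer $x^*$ to be accompanied by $-x^*$ and precludes any further maximizer, while for odd $k$ the maximizer is unique outright.
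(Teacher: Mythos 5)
Your strategy is workable, but it is a genuinely different and much heavier route than the paper's. The paper disposes of the proposition in a few lines by invoking a classical fact about Gaussian processes (Lemma~\ref{lem:unique-min-gaussian}, after Kim--Pollard): a continuous Gaussian process on a compact metric space whose increments are non-degenerate, i.e.\ $\Var\big(Z(s)-Z(t)\big)>0$ for $s\neq t$, has an almost surely unique maximizer. Since $\Var\big(H_{\lambda}(x^1)-H_{\lambda}(x^2)\big)=2N\big(1-(x^1,x^2)^k\big)$ (the deterministic term $\lambda N(x,X)^k$ does not affect the variance), this gives uniqueness outright for $k$ odd, and for $k$ even one applies the same lemma on the quotient $\mathbb{S}^{N-1}/\{x\sim -x\}$, which is exactly how the antipodal pair arises. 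Note that the quantity $2\sqrt{N}\big(1-(c_i,c_j)^k\big)$ produced by your envelope computation is precisely this increment variance: your Morse/Kac--Rice machinery re-derives by hand the non-degeneracy that the general lemma consumes directly. In particular, the entire first step (a.s.\ Morse property, finiteness of the critical set) is unnecessary for the statement: one never needs the argmax to be isolated or the critical set finite, only that no two non-equivalent points can share the maximal value.

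As written, your argument is also not yet complete at the places you yourself flag. The local implicit-function-theorem parametrization plus ``non-constant real-analytic, hence null zero set'' does not by itself control the event that \emph{some} pair of maximizers coincides in value, because the critical points are only locally defined functions of $W$ and can be born, merge, or exchange as $W$ varies; the patching over a countable cover of the Morse stratum has to be carried out carefully, and measure-zero statements for locally defined branch differences do not automatically transfer to the global event without it. The cleaner alternative you mention --- a Bulinskaya/Kac--Rice bound on pairs via the field $\big(\nabla H_{\lambda}(x),\nabla H_{\lambda}(y),H_{\lambda}(x)-H_{\lambda}(y)\big)$ on $\{x\neq\pm y\}$ --- is indeed standard in the spin-glass literature, but it hinges on the joint non-degeneracy of this $(2(N-1)+1)$-dimensional Gaussian vector for every pair with $|(x,y)|<1$, which you assert without proof; for the pure $k$-spin covariance this is a genuine computation (and it is exactly the kind of quantitative input the soft argument of Lemma~\ref{lem:unique-min-gaussian} lets you avoid, since there only the scalar increments $H_{\lambda}(x)-H_{\lambda}(y)$ need to be non-degenerate). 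So the proposal is salvageable, but to make it a proof you must either supply that joint non-degeneracy and the ensuing Kac--Rice estimate, or carry out the globalization of the local analyticity argument --- or simply replace all of it by the two-line variance computation and the cited uniqueness lemma, as the paper does.
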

	\begin{proof}
We note the following basic fact from the theory of Gaussian processes,
see, e.g.\  \cite{kim1990cube}.
		\begin{lem}\label{lem:unique-min-gaussian}
			Let $(Z(t))_{t \in T}$ be a Gaussian process indexed by a compact metric space $T$ such that $t \mapsto Z(t)$ is continuous almost surely.
			If the intrinsic quasi-metric, $d(s,t)^2 = \Var\big(Z(s) - Z(t)\big)$, is a metric, i.e., $d(s,t) \neq 0$ for $s\neq t$, then $Z$ admits a unique maximizer on $T$ almost surely.
		\end{lem}
		Observe $H_{\lambda}$ is continuous on the compact $\mathbb{S}^{N-1}$. For $x_1,x_2 \in \mathbb{S}^{N-1}$, we have
		$$
		\Var\big(H_{\lambda}(x^1) - H_{\lambda}(x^2)\big)
		= 2 N \big(1 - ( x^1, x^2 )^k  \big).
		$$
		If $k$ is odd, then the proposition follows directly from the Lemma. If $k$ is even, we apply the Lemma on the quotient space $\mathbb{S}^{N-1} / \sim$ where $\sim$ denotes the equivalence relation defined by $x^1 \sim x^2 \Leftrightarrow \big(x^1 = x^2 \ \text{or} \ x^1 = - x^2\big)$.
	\end{proof}

	We recall the following envelope theorem of Milgrom and Segal~\cite{milgrom2002envelope}.
	Let $X$ be a set of parameters and consider a function $f: X \times [0,1] \to \R$. Define, for $t \in [0,1]$
	\begin{align*}
		V(t) &= \sup_{x \in X} f(x,t) \,,\\
		X^*(t) &= \big\{ x \in X \, \big| \, f(x,t) = V(t) \big\} \,.
	\end{align*}

	\begin{prop}[Corollary~4 from~\cite{milgrom2002envelope}\,] \label{prop:envelope_compact}
		Suppose that $X$ is nonempty and compact. Suppose that for all $t\in [0,1]$, $f(\cdot,t)$ is continuous. Suppose also that $f$ admits a partial derivative $f_t$ with respect to $t$ that is continuous in $(x,t)$ over $X \times [0,1]$. Then
		\begin{itemize}
			\item $\displaystyle V'(t^+) = \max_{x^* \in X^*(t)} f_t(x^*,t)$ for all $t\in [0,1)$ and $\displaystyle V'(t^-) = \min_{x^* \in X^*(t)} f_t(x^*,t)$ for all $t\in (0,1]$.
			\item $V$ is differentiable at $t \in (0,1)$ is and only if \,$\displaystyle \Big\{ f_t(x^*,t) \, \Big| \, x^* \in X^*(t) \Big\}$
				is a singleton. In that case $V'(t) = f_t(x^*,t)$ for all $x^* \in X^*(t)$.
		\end{itemize}
	\end{prop}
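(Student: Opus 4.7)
The plan is to prove the one-sided derivative formulas by standard variational inequalities that exploit the two roles played by a maximizer of the envelope, and then deduce the differentiability characterization from the resulting upper and lower bounds. The continuity hypotheses on $f$ and $f_t$, together with compactness of $X$, will ensure that the maximizer set $X^*(t)$ behaves upper semicontinuously in $t$, which is the only nontrivial input needed.

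For the lower bound on the right derivative at $t \in [0,1)$, fix any $x^* \in X^*(t)$. For $h > 0$ small, the suboptimality estimate $V(t+h) \geq f(x^*, t+h)$ combined with $V(t) = f(x^*, t)$ gives
\[
\frac{V(t+h) - V(t)}{h} \geq \frac{f(x^*, t+h) - f(x^*, t)}{h} \xrightarrow[h\to 0^+]{} f_t(x^*, t),
\]
by the hypothesis on $f_t$. Taking the maximum over $x^* \in X^*(t)$ yields
\[
\liminf_{h \to 0^+} \frac{V(t+h) - V(t)}{h} \geq \max_{x^* \in X^*(t)} f_t(x^*, t).
\]
For the matching upper bound, I would pick for each small $h > 0$ an element $x_h \in X^*(t+h)$, which is nonempty since $X$ is compact and $f(\cdot, t+h)$ is continuous. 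Then $V(t+h) = f(x_h, t+h)$ and $V(t) \geq f(x_h, t)$, so
\[
V(t+h) - V(t) \leq f(x_h, t+h) - f(x_h, t) = \int_0^h f_t(x_h, t+s)\, ds.
\]
By compactness of $X$, along any sequence $h_n \to 0^+$ we can extract a sub-subsequence with $x_{h_n} \to x^\infty$ for some $x^\infty \in X$; the joint continuity of $f$ and the fact that $V$ is continuous (from the maximum theorem) force $x^\infty \in X^*(t)$. Uniform continuity of $f_t$ on the compact set $X \times [0,1]$ then gives $\frac{1}{h_n}\int_0^{h_n} f_t(x_{h_n}, t+s)\, ds \to f_t(x^\infty, t) \leq \max_{x^* \in X^*(t)} f_t(x^*, t)$, so
\[
\limsup_{h \to 0^+} \frac{V(t+h) - V(t)}{h} \leq \max_{x^* \in X^*(t)} f_t(x^*, t).
\]
Combining the two bounds gives the formula for $V'(t^+)$. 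The formula for $V'(t^-)$ follows by the same argument applied to $h \to 0^-$: the sign of $h$ reverses the inequalities $V(t+h) - V(t) \geq f(x^*, t+h) - f(x^*, t)$ and $V(t+h) - V(t) \leq f(x_h, t+h) - f(x_h, t)$, which converts the maximum into a minimum.

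The second bullet is then immediate: $V'(t^-) \leq V'(t^+)$ is automatic from the two expressions (since $\min \leq \max$), and $V$ is differentiable at $t$ iff these coincide, iff $f_t(\cdot, t)$ is constant on $X^*(t)$, iff the set $\{f_t(x^*, t) : x^* \in X^*(t)\}$ is a singleton; in this case the common value is $V'(t)$. The main obstacle is ensuring the extracted limit point $x^\infty$ of maximizers $x_h \in X^*(t+h)$ lies in $X^*(t)$, i.e.\ upper semicontinuity of the argmax correspondence, but this is standard from the continuity of $f$ and $V$ and compactness of $X$.
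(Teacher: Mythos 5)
Your proof is correct; the paper does not prove this statement but imports it verbatim as Corollary~4 of Milgrom--Segal, and your argument is essentially their proof (one-sided bounds from the two roles of a maximizer, plus upper semicontinuity of the argmax correspondence). The only point to tidy is your appeal to ``joint continuity of $f$,'' which is not literally a hypothesis: it follows from the stated assumptions, since $|f(x,t)-f(x,s)|\leq\int_s^t|f_t(x,u)|\,du\leq C|t-s|$ with $C=\max_{X\times[0,1]}|f_t|$ gives equi-Lipschitz continuity in $t$, which combined with continuity of $f(\cdot,t)$ yields both the joint continuity you use and the (Lipschitz) continuity of $V$ needed to conclude $x^\infty\in X^*(t)$.
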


	\subsection{Study of the asymptotic equations}\label{app:asymptotic}
Define, for all $q \in [0,1]$
\begin{equation}\label{eq:def_phi}
	\phi_{\lambda}(q) = \lambda^2 k q^{k-1} - \log(1 + \lambda^2 k q^{k-1}) - \lambda^2 (k-1) q^k.
\end{equation}
\begin{lem}\label{lem:gauss_scalar}
	We have for all $\lambda > 0$,
	$$
	\max_{q \in [0,1)} f_{\lambda}(q) = \max_{q \in [0,1]} \phi_{\lambda}(q)
	$$
	Furthermore, if we let $\lambda_s = \sqrt{\frac{(k-1)^{k-1}}{k (k-2)^{k-2}}}$:
	\begin{itemize}
		\item For $\lambda < \lambda_s$, then the functions $f_{\lambda}$ and $\phi_{\lambda}$ are decreasing on $[0,1)$.
					\item For $\lambda > \lambda_s$, the functions $f_{\lambda}$ and $\phi_{\lambda}$ have a strict local minimum at $q_u(\lambda)$
						and a strict local maximum at $q_s(\lambda)$ where $0<q_u< \frac{k-2}{k-1}<q_s<1$, {and both functions are} strictly monotone on the intervals $(0,q_u)$, $(q_u,q_s)$ and $(q_s,1)$. Moreover, $q_s(\lambda)$ {is strictly increasing in $\lambda$} and satisfies:	
			\begin{equation}\label{eq:q_SE}
				q_s(\lambda) = \frac{\lambda^2 k q_s(\lambda)^{k-1}}{1 + \lambda^2 k q_s(\lambda)^{k-1}} \,.
			\end{equation}
	\end{itemize}
	Finally, for $\lambda > \lambda_c$,  $q_*(\lambda) = q_s(\lambda)$ is the unique maximizer of $f_{\lambda}$ and $\phi_{\lambda}$ over $[0,1)$.
\end{lem}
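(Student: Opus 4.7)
The plan is to reduce the entire lemma to the sign analysis of a single auxiliary function. A direct differentiation yields
\[
f_\lambda'(q) \;=\; \frac{h(q)}{1-q}, \qquad \phi_\lambda'(q) \;=\; \frac{\lambda^2 k(k-1)\, q^{k-2}}{1 + \lambda^2 k q^{k-1}} \cdot h(q), \qquad h(q) := \lambda^2 k q^{k-1}(1-q) - q.
\]
The prefactors are strictly positive on $(0,1)$, so $f_\lambda'$ and $\phi_\lambda'$ have the same sign on $(0,1)$; hence $f_\lambda$ and $\phi_\lambda$ share identical monotonicity intervals and critical points on $(0,1)$.

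Next I would write $h(q) = q\bigl(\lambda^2 k\, \psi(q) - 1\bigr)$ with $\psi(q) = q^{k-2}(1-q)$. An elementary calculation gives $\psi'(q) = q^{k-3}\bigl((k-2) - (k-1)q\bigr)$, so $\psi$ is unimodal on $[0,1]$ with unique maximum at $q^\sharp := (k-2)/(k-1)$, of value $\psi(q^\sharp) = (k-2)^{k-2}/(k-1)^{k-1} = 1/(k\lambda_s^2)$. Thus the positive zeros of $h$ correspond to $\psi(q) = 1/(\lambda^2 k)$: none exist for $\lambda < \lambda_s$ (so $h<0$ on $(0,1)$, giving monotone decrease), while for $\lambda > \lambda_s$ there are exactly two, $q_u(\lambda) < q^\sharp < q_s(\lambda)$, with $h>0$ in between and $h<0$ on the two flanking intervals. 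This immediately yields the monotonicity pattern, identifies $q_u$ as a strict local minimum and $q_s$ as a strict local maximum of both functions, and gives $0 < q_u < q^\sharp < q_s < 1$. The self-consistent equation \eqref{eq:q_SE} is a direct rewriting of $h(q_s) = 0$. Strict monotonicity of $q_s$ in $\lambda$ follows from the implicit function theorem applied to $h(q,\lambda) = 0$: $\partial_\lambda h = 2\lambda k q^{k-1}(1-q) > 0$, and $\partial_q h < 0$ at $q_s$ because $h$ crosses zero from above to below there.

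For the equality of maxima, at any root $q$ of $h$ one has $\lambda^2 k q^{k-1}(1-q) = q$, equivalently $1-q = 1/(1+y)$ with $y := \lambda^2 k q^{k-1}$, and $q = y/(1+y)$. Substituting these identities into $f_\lambda(q)$ and $\phi_\lambda(q)$ respectively, both simplify to $y(y+k)/(k(1+y)) - \log(1+y)$; they also trivially agree at $q = 0$. Combined with the shared monotonicity and the boundary behavior $f_\lambda(q) \to -\infty$ as $q \to 1^-$ versus $\phi_\lambda(1)$ finite (which only reinforces that the maxima must be attained at a common critical point), this establishes $\max_{[0,1)} f_\lambda = \max_{[0,1]} \phi_\lambda$. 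Finally, for $\lambda > \lambda_c$, the definition \eqref{eq:lambda_c-def} together with $f_\lambda(0) = 0$ forces $\sup f_\lambda > 0$ to be attained strictly inside $(0,1)$; since the only interior local maximum is $q_s(\lambda)$, we conclude $q_*(\lambda) = q_s(\lambda)$ and uniqueness follows from strict monotonicity on the three subintervals $(0,q_u)$, $(q_u,q_s)$, $(q_s,1)$.

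The only genuine computation is the algebraic identity $f_\lambda(q) = \phi_\lambda(q)$ at common critical points, which is the main (mild) obstacle; everything else is a routine sign analysis of $h$ via the unimodal profile of $\psi$.
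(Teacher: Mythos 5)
Your proposal is correct and follows essentially the same route as the paper: differentiate to factor out $h(q)=\lambda^2kq^{k-1}(1-q)-q$, analyze its sign via the unimodal polynomial $q^{k-2}(1-q)$ with peak at $(k-2)/(k-1)$ and peak value $1/(k\lambda_s^2)$, read off \eqref{eq:q_SE} and the monotonicity of $q_s$, and deduce the equality of maxima from the fact that $f_\lambda$ and $\phi_\lambda$ agree at the common critical points (the paper gets this from the global identity $\phi_\lambda=f_\lambda+h-\log(1+h)$, you by direct substitution at roots of $h$ — an equivalent computation). The only cosmetic imprecision is the justification that $\partial_q h(q_s)<0$ strictly for the implicit function theorem step, which in your own setup follows immediately from $h(q)=k\lambda^2 q\,(\psi(q)-\tfrac{1}{k\lambda^2})$ and $\psi'(q_s)<0$ since $q_s>(k-2)/(k-1)$.
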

\begin{proof}
	We have for $q \in [0,1)$
	\begin{align}\label{eq:der_phi0}
	\phi_{\lambda}'(q) = 
	\frac{%
	k(k-1) \lambda^2 q^{k-1}}{%
1 + \lambda^2 k q^{k-1}}
h(q)
\qquad \text{and} \qquad
	f_{\lambda}'(q) = \frac{h(q)}{1-q}
	\end{align}
	where $h(q) = \lambda^2 k q^{k-1} - \lambda^2 k q^{k} - q$.
	It suffices therefore to study the variations of $f_{\lambda}$. Notice also that
	$$
	\phi_{\lambda}(q) = f_{\lambda}(q) + h(q) - \log(1 + h(q)).
	$$
	{Since $f_\lambda'(q) = 0$ implies $h(q)=0$,} this implies that 
	$$
	\max_{q \in [0,1]} \phi_{\lambda}(q) = \max_{q \in [0,1)} f_{\lambda}(q)
	$$
	and that these maxima are achieved at the same points. 
	Let us now study the sign of the polynomial $h(q)$:
	\begin{equation}\label{eq:si}
		h(q) = qk \lambda^2 \big(q^{k-2} - q^{k-1} - \frac{1}{k \lambda^2}\big).
	\end{equation}
	One verifies easily that the polynomial $q^{k-2} - q^{k-1}$ achives its maximum at $\frac{k-2}{k-1}$ and that the value of this maximum is $\frac{(k-2)^{k-2}}{(k-1)^{k-1}}$.
	We get that for $\lambda < \lambda_s$, $h'(q) < 0$ for all $q>0$. For $\lambda > \lambda_s$ we get that $h$ admits exactly 3 zeros on $\R$: $0< q_u( \lambda ) < q_s(\lambda)<1$. 
	Since the maximum of $h$ is achieved at $\frac{k-2}{k-1}$ we get that $q_u(\lambda) < \frac{k-2}{k-1} < q_s(\lambda)$.
	{
		$q_u<q_s$ are the positive roots of $q^{k-2} - q^{k-1} = \frac{1}{k \lambda^2}$: $q_u$ is therefore strictly decreasing and $q_s$ is strictly increasing in $\lambda$.
	}
	This proves the two points of the lemma; \eqref{eq:q_SE} simply follows from the fact that $h(q_s(\lambda)) = 0$.
	The last statement of Lemma~\ref{lem:gauss_scalar} is then an immediate consequence of the definition of $\lambda_c$.
\end{proof}

Recall that $z_k$ is defined as the unique zero of $\varphi_k(z) = \frac{1+z}{z^2} \log(1+z) - \frac{1}{z} - \frac{1}{k}$ on $(0,+\infty)$.

\begin{lem}\label{lem:x_k}
	The mapping $\lambda \mapsto q_s(\lambda)$ is $\cC^{\infty}$ on $(\lambda_s, + \infty)$.
	Moreover
	$\lambda^2 k q_s(\lambda_c)^{k-1} = z_k$.
\end{lem}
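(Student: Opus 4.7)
The proof splits naturally into two parts: the smoothness of $q_s$ and the identification of $\lambda_c^2 k q_s(\lambda_c)^{k-1}$ with $z_k$.

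For the smoothness claim on $(\lambda_s,+\infty)$, my plan is to apply the implicit function theorem to the equation defining $q_s$. From the proof of \prettyref{lem:gauss_scalar}, for $\lambda > \lambda_s$ the value $q_s(\lambda)$ is the larger of the two solutions in $(0,1)$ of $g(q) := q^{k-2} - q^{k-1} = 1/(k\lambda^2)$, and moreover $q_s(\lambda) > (k-2)/(k-1)$ strictly. Since $g'(q) = q^{k-3}\bigl((k-2) - (k-1)q\bigr)$ has its only zero on $(0,1)$ at $(k-2)/(k-1)$, we have $g'(q_s(\lambda)) \neq 0$, and the implicit function theorem immediately gives $q_s \in C^{\infty}(\lambda_s, +\infty)$.

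For the identity at $\lambda_c$, the strategy is to exploit that at $\lambda = \lambda_c$ both $f_{\lambda_c}(q_s(\lambda_c)) = 0$ and $f_{\lambda_c}'(q_s(\lambda_c)) = 0$ hold. The second is just the critical-point condition from \prettyref{lem:gauss_scalar}. For the first, I plan to invoke the following monotonicity argument: by \prettyref{lem:gauss_scalar}, for $\lambda > \lambda_s$ the global maximum of $f_\lambda$ on $[0,1)$ equals $\max\bigl(0, f_\lambda(q_s(\lambda))\bigr)$, and a direct envelope-type computation gives
\[
	\frac{d}{d\lambda} f_\lambda(q_s(\lambda)) = 2\lambda\, q_s(\lambda)^k > 0,
\]
using $f_\lambda'(q_s(\lambda)) = 0$. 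Hence $\lambda \mapsto f_\lambda(q_s(\lambda))$ is continuous and strictly increasing, starting negative near $\lambda_s$ and tending to $+\infty$, so by the definition of $\lambda_c$ the crossing of zero occurs precisely at $\lambda = \lambda_c$, giving $f_{\lambda_c}(q_s(\lambda_c)) = 0$.

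With the two equations in hand, set $q := q_s(\lambda_c)$ and $z := \lambda_c^2 k q^{k-1} > 0$. From $f_{\lambda_c}'(q) = k\lambda_c^2 q^{k-1} + 1 - 1/(1-q) = 0$ I obtain $z = q/(1-q)$, hence $q = z/(1+z)$ and $1-q = 1/(1+z)$. Substituting these together with $\lambda_c^2 q^{k-1} = z/k$ into $f_{\lambda_c}(q) = \lambda_c^2 q^k + \log(1-q) + q = 0$ gives $(z/k)q + q - \log(1+z) = 0$, i.e.\ $q(z+k)/k = \log(1+z)$. Replacing $q$ by $z/(1+z)$, multiplying by $k/z^2$, and rearranging yields
\[
	\frac{1+z}{z^2}\log(1+z) - \frac{1}{z} - \frac{1}{k} = 0,
\]
which is precisely $\varphi_k(z) = 0$. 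Since $z > 0$ and $z_k$ is the unique positive zero of $\varphi_k$ by definition, we conclude $z = z_k$.

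The main obstacle is establishing cleanly that $f_{\lambda_c}(q_s(\lambda_c)) = 0$: this requires both that $\lambda_c > \lambda_s$ (so that $q_s(\lambda_c)$ is well-defined, which one verifies by checking that $f_{\lambda_s}$ evaluated at the inflection point $(k-2)/(k-1)$ is strictly negative) and the envelope-type monotonicity argument above. The subsequent algebraic reduction to $\varphi_k$ is a direct but slightly tedious computation with no real subtleties.
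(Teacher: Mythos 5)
Your proof is correct and follows essentially the same route as the paper's: the implicit function theorem for smoothness, the vanishing at $\lambda=\lambda_c$ of the critical value of $f_\lambda$ (the paper works with $\phi_{\lambda}$, but by Lemma~\ref{lem:gauss_scalar} this is the same condition), and then the identical algebraic substitution $q_s=z/(1+z)$ reducing to $\varphi_k(z)=0$. You simply fill in details the paper leaves terse, such as the monotonicity in $\lambda$ of $f_\lambda(q_s(\lambda))$ and the verification that $\lambda_s<\lambda_c$.
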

\begin{proof}
	The first part follows from a straightforward application of the implicit function theorem.

	We get in particular that the mapping $\lambda \mapsto q_s(\lambda)$ is continuous for $\lambda > \lambda_s $. So by definition of $\lambda_c$ and Lemma~\ref{lem:gauss_scalar}, $\phi_{\lambda_c}(q_s(\lambda_c)) = 0$.
	Let us write $x = \lambda^2 k q_s(\lambda_c)^{k-1}$. 
	\begin{align*}
		0 = \phi_{\lambda_c}(q_s(\lambda_c)) =
		x - \log(1+ x) - \frac{k-1}{k} x q_s(\lambda_c)
		=
		x - \log(1+ x) - \frac{k-1}{k} \frac{x^2}{1+x} \,,
	\end{align*}
	because $q_s(\lambda_c) = \frac{x}{1+x}$ (see \eqref{eq:q_SE}).
	This gives that $\varphi_k(x) = 0$ and thus $x=z_k$.
	\end{proof}

\begin{lem}\label{lem:key_identities}
	Let $\lambda > \lambda_c$ and write $x(\lambda) = \lambda^2 k q_s(\lambda)^{k-1}$. Then we have
	$$
	\frac{\sqrt{k}}{\sqrt{1+x(\lambda)}} \Big(1 + \frac{x(\lambda)}{k}\Big)
	=	{\rm GS}_k + \int_{\lambda_c}^{\lambda} q_s(\gamma)^{k/2} d\gamma
	\,.
	$$
\end{lem}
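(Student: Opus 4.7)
The plan is to verify the identity by checking that both sides agree at $\lambda=\lambda_c$ and have the same derivative on $(\lambda_c,\infty)$.

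\textbf{Boundary value at $\lambda=\lambda_c$.} By Lemma~\ref{lem:x_k}, $x(\lambda_c)=z_k$, so
\[
\frac{\sqrt{k}}{\sqrt{1+x(\lambda_c)}}\Big(1+\frac{x(\lambda_c)}{k}\Big) = \frac{\sqrt{k}}{\sqrt{1+z_k}}\Big(1+\frac{z_k}{k}\Big) = {\rm GS}_k
\]
by the definition \eqref{eq:def_GS}. The right-hand side of the target identity equals $\mathrm{GS}_k$ at $\lambda=\lambda_c$ since the integral vanishes.

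\textbf{Derivative of the right-hand side.} By the fundamental theorem of calculus and the continuity of $q_s$ (Lemma~\ref{lem:x_k}), the derivative equals $q_s(\lambda)^{k/2}$.

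\textbf{Derivative of the left-hand side.} Write $L(\lambda)=(k+x)/\sqrt{k(1+x)}$ with $x=x(\lambda)$. A direct chain-rule computation yields
\[
L'(\lambda) \;=\; \frac{x'(\lambda)}{\sqrt{k(1+x)}}\cdot \frac{2(1+x)-(k+x)}{2(1+x)} \;=\; \frac{x'(\lambda)\,(x+2-k)}{2(1+x)\sqrt{k(1+x)}}.
\]
To compute $x'(\lambda)$, combine the two relations $x=\lambda^2 k\, q_s^{k-1}$ and $q_s=x/(1+x)$ from \eqref{eq:q_SE}. Taking the logarithmic derivative of the first and using $q_s'=x'/(1+x)^2$ (obtained by differentiating $q_s(1+x)=x$) gives
\[
\frac{x'}{x}\cdot\frac{x-(k-2)}{1+x} \;=\; \frac{2}{\lambda}, \quad\text{hence}\quad x'(\lambda)\;=\;\frac{2x(1+x)}{\lambda\,(x+2-k)}.
\]
The denominator is positive for $\lambda>\lambda_c$ since Lemma~\ref{lem:gauss_scalar} implies $q_s>(k-2)/(k-1)$, i.e., $x>k-2$. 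Substituting back,
\[
L'(\lambda) \;=\; \frac{x}{\lambda\sqrt{k(1+x)}}.
\]

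\textbf{Matching the derivatives.} It remains to verify $\frac{x}{\lambda\sqrt{k(1+x)}} = \big(x/(1+x)\big)^{k/2}$. Squaring both sides and clearing denominators, this is equivalent to $(1+x)^{k-1}=\lambda^2 k\, x^{k-2}$, which is precisely the rewriting of $x=\lambda^2 k q_s^{k-1}$ under $q_s=x/(1+x)$. Thus $L'(\lambda)=q_s(\lambda)^{k/2}$, completing the proof.

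\textbf{Main obstacle.} The only non-mechanical step is computing $x'(\lambda)$ correctly from the coupled equations $x=\lambda^2 k q_s^{k-1}$ and $q_s=x/(1+x)$, and observing the fortuitous cancellation of the factor $(x+2-k)$ between $x'(\lambda)$ and the chain-rule factor. All other steps are elementary, provided one uses the explicit consequences of Lemma~\ref{lem:gauss_scalar} to ensure positivity of denominators.
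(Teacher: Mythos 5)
Your proof is correct, and the computation checks out: the implicit differentiation of $x=\lambda^2 k q_s^{k-1}$ together with $q_s'=x'/(1+x)^2$ gives $x'=\tfrac{2x(1+x)}{\lambda(x+2-k)}$, the factor $(x+2-k)$ cancels (and is indeed positive since $q_s>\tfrac{k-2}{k-1}$ forces $x>k-2$), and the final identity $(1+x)^{k-1}=\lambda^2 k x^{k-2}$ is exactly the fixed-point relation \eqref{eq:q_SE} rewritten. The route differs from the paper's in the key step. Both proofs match the value at $\lambda_c$ via $x(\lambda_c)=z_k$ (Lemma~\ref{lem:x_k}) and then show the derivative of the left side is $q_s(\lambda)^{k/2}$; but the paper gets the derivative with no computation at all, by recognizing from Lemma~\ref{lem:up_qs} that $g(\lambda)=E_\lambda(\sqrt{q_s(\lambda)})$ and invoking criticality of $\sqrt{q_s(\lambda)}$ for $E_\lambda$, so that $g'(\lambda)=\partial_\lambda E_\lambda\big(\sqrt{q_s(\lambda)}\big)=q_s(\lambda)^{k/2}$ (an envelope argument, using $E_\lambda(m)=\lambda m^k+\sqrt{k(1-m^2)}$ on the relevant range of $m$). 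Your version is more elementary and self-contained — it needs only \eqref{eq:q_SE}, the bound $q_s>\tfrac{k-2}{k-1}$, and the smoothness of $q_s$ from Lemma~\ref{lem:x_k}, and never references $E_\lambda$ — at the cost of an explicit calculation and a cancellation that looks fortuitous; the paper's envelope argument explains that cancellation conceptually (it is exactly stationarity of $E_\lambda$ at $\sqrt{q_s(\lambda)}$) and is shorter, but it leans on Lemma~\ref{lem:up_qs} and the explicit formula \eqref{eq:E}.
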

\begin{proof}
	Let us write $g(\lambda) = \frac{\sqrt{k}}{\sqrt{1+x(\lambda)}} \big(1 + \frac{x(\lambda)}{k}\big)$.
	By Lemma~\ref{lem:up_qs}, $\sqrt{q_s(\lambda)}$ is a local maximizer of $E_{\lambda}$ and thus a critical point of $E_{\lambda}$. This gives
	$$
	g'(\lambda) = \partial_{\lambda} \Big[E_{\lambda}(\sqrt{q_s(\lambda)})\Big]
	= \partial_{\lambda} E_{\lambda}(\sqrt{q_s(\lambda)}) = q_s(\lambda)^{k/2}.
	$$
	The lemma follows then from the fact that $x(\lambda_c) = z_k$ by Lemma~\ref{lem:x_k} and the definition \eqref{eq:def_GS} of ${\rm GS}_k$.
\end{proof}

\subsection{Proof of \prettyref{prop:IT}}\label{app:proof-prop-IT} 
For $P_0$ a probability distribution over $(\R^N)^{\otimes k}$ with finite second moment, we define the free energy
$$
F_{P_0}(\gamma) =
\frac{1}{N} \E \log \int P_0(dx) \exp\Big( 
	\sqrt{\gamma N}( x , W )
	+ {\gamma}N ( x, X_0  )
	- \frac{1}{2} {\gamma} N \| x\|^{2}
\Big)
$$
where $X_0 \sim P_0$ and ${W}_{i_1,\dots,i_k} \sim \cN(0,1)$ are independent. Proposition~A.1 from~\cite{miolane2018phase} states that for two probability distributions $P_1$, $P_2$ on $(\R^N)^{\otimes k}$ with finite second moment, we have
$$
\big| F_{P_1}(\gamma) - F_{P_2}(\gamma) \big|
\leq \frac{\gamma}{2} \Big(\sqrt{\E_{P_1} \|X_1\|^2} + \sqrt{\E_{P_2} \| X_2 \|^2}\Big) W_2(P_1,P_2),
$$
where $W_2(P_1,P_2)$ denotes the Wasserstein distance of order $2$ between $P_1$ and $P_2$. 
Let $\mu_N$ be the distribution of $X^{\otimes k}$ when $X \sim {\rm Unif}(\mathbb{S}^{N-1})$ and let $\nu_N$ be the distribution of $X^{\otimes k}$ when $X \sim {\cN(0,\frac{1}{N}Id_N)}$.
Let us compute a bound on $W_2(\mu_N, \nu_N)$. Let {$X $ be drawn uniformly over $\mathbb S^{N-1}$}, and $G \sim {\cN(0,Id)}$, independently from $X$. Then {$(X^{\tensor k}, (\|G\| X/\sqrt{N})^{\tensor k} )$} is a coupling of $\mu_N$ and $\nu_N$, so that, by definition $W_2$,
\begin{align*}
W_2(\mu_N, \nu_N)^2 \leq
	\E \big\| X^{\otimes k} - \big( X \|G\| /\sqrt{N} \big)^{\otimes k} \big\|^2
	= \E \Big[\Big(\Big( \frac{1}{N} \sum_{i=1}^N G_i^2\Big)^{k/2} - 1 \Big)^2 \Big]
\end{align*}
{where we use that $\E\norm{X}^k=1$. By the law of large numbers, it then follows that 
\[
\lim_{N\to\infty} \abs{F_{\mu_N}(\gamma)-F_{\nu_N}(\gamma)}\to0.
\]}
Recall the definition \eqref{eq:def_phi} of $\phi_{\lambda}(q)$ and define 
$L(\gamma) = \frac{1}{2}\max_{q \in [0,1]} \phi_{{\sqrt{\gamma}}}(q) = \frac{1}{2} \max_{q \in [0,1)} f_{{\sqrt{\gamma}}}(q)$, where the equality comes from Lemma~\ref{lem:gauss_scalar}.
Now,~\cite[Theorem 1]{lesieur2017statistical} gives that for all $\lambda \geq 0$, $F_{\nu_N}(\gamma) \rightarrow L(\gamma)$ as $N\rightarrow \infty$, which implies $F_{\mu_N}(\gamma) \rightarrow L(\gamma)$. 

The ``I-MMSE Theorem'' from~\cite{guo2005mutual}
(see \cite[Proposition~1.4]{miolane2018phase} for a statement {of this result} closer to the notations used here) gives that $\gamma \mapsto F_{\mu_N}(\gamma)$ is convex and differentiable over $[0, + \infty)$ and
$$
F_{\mu_N}'(\lambda^2) = \frac{1}{2} \left(
	1 -
	\E \Big[
		\big\| X^{\otimes k} - \E\big[ X^{\otimes k} \big| Y \big] \big\|^2
	\Big]
\right).
$$
{By Griffith's lemma for convex functions, $F_{\mu_N}'$ converges to {$L'$ } for each $\lambda > 0$ at which $L$ is differentiable.} For {$\gamma < \lambda_c^2$}, $L(\gamma) = 0$, so $L$ is differentiable on {$(0,\lambda_c^2)$} with derivative equal to $0$. For {$\gamma > \lambda_c^2$}, we know by Lemma~\ref{lem:gauss_scalar} that $f_{{\sqrt{\gamma}}}$ admits a unique maximizer $q_*({\sqrt{\gamma}})$ on $[0,1]$. Proposition~\ref{prop:envelope_compact} gives that $L$ is differentiable at $\gamma$ with derivative
\begin{align*}
	L'(\gamma) 
	&= \frac{1}{2} (\partial_{\gamma} f_{\sqrt{\gamma}}) (q_*(\sqrt \gamma))
	= \frac{1}{2} q_*(\sqrt \gamma)^k.
\end{align*}
We conclude that 
$$
\lim_{N \to \infty} \frac{1}{2} \left(
	1 - 
	\E \Big[
		\big\| X^{\otimes k} - \E\big[ X^{\otimes k} \big| Y \big] \big\|^2
	\Big]
\right)
= 
\lim_{N \to \infty}
F_{\mu_N}'(\lambda^2) =
\begin{cases}
	0 & \text{if} \ \ \lambda < \lambda_c \\
	\frac{1}{2} q_*(\lambda)^k & \text{if} \ \ \lambda > \lambda_c.
\end{cases}\qed
$$

\subsection{Elementary lemmas}
We collect here the following elementary lemmas which are used in the above.
\begin{lem}\label{lem:chi_lb}
Let $\{X_i\}$ be standard normal random variables, and let 
$S_N = \frac{1}{bN} \sum_{i=1}^N {X^2_i}.$
{There is a $C>0$ such that for $N\geq 1$ and $b> 1$ (possibly varying in $N$)}, 
\begin{equation}\label{eq:chi-square-lower-bound}
 \frac{1}{N } \log \prob ( S_N \ge 1 ) \ge -  \frac{1}{2}(b-1-\log b)  - \frac{2 \log b}{N} - C \cdot\frac{\log N}{N}  .
 \end{equation}
\end{lem}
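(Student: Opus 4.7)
The plan is to use the explicit density of the chi-square distribution together with Stirling's formula, since the variable $Y = \sum_{i=1}^{N} X_i^2$ is a chi-square with $N$ degrees of freedom and density
\[
f_Y(y) = \frac{y^{N/2-1} e^{-y/2}}{2^{N/2}\,\Gamma(N/2)}, \qquad y > 0.
\]
The quantity we want to bound from below is $\prob(S_N \geq 1) = \prob(Y \geq bN)$, and the Cramér rate function for $Y/N$ at $b$ is precisely $\tfrac{1}{2}(b-1-\log b)$, so morally the result just needs a quantitative lower bound matching this rate with control of the subexponential error.

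First I would observe that the mode of $f_Y$ is at $y = N-2$, and since $b>1$, we have $bN \geq N-2$, so $f_Y$ is non-increasing on $[bN,\infty)$. This lets me lower bound the tail integral by evaluating the density near the endpoint: for any $L>0$,
\[
\prob(Y \geq bN) \geq \int_{bN}^{bN + L} f_Y(y)\,dy \geq L\, f_Y(bN + L).
\]
Taking $L = 1$, the ratio $f_Y(bN+1)/f_Y(bN) = (1+\tfrac{1}{bN})^{N/2-1} e^{-1/2}$ is bounded below by $e^{-1/2}$ (for $N\geq 2$), so $\prob(Y \geq bN) \geq e^{-1/2}\, f_Y(bN)$.

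Next I would compute $\log f_Y(bN)$ directly. Stirling's formula gives
\[
\log \Gamma(N/2) = \tfrac{N}{2}\log(N/2) - \tfrac{N}{2} - \tfrac{1}{2}\log(N/2) + \tfrac{1}{2}\log(2\pi) + O(1/N),
\]
and substituting into the expression for $\log f_Y(bN) = (N/2-1)\log(bN) - bN/2 - (N/2)\log 2 - \log\Gamma(N/2)$ gives, after a few lines of bookkeeping,
\[
\log f_Y(bN) = -\tfrac{N}{2}(b - 1 - \log b) - \log b - \tfrac{1}{2}\log N + O(1),
\]
with an absolute $O(1)$ term independent of $b$. Combining with the previous step and dividing by $N$,
\[
\tfrac{1}{N}\log \prob(Y \geq bN) \geq -\tfrac{1}{2}(b-1-\log b) - \tfrac{\log b}{N} - \tfrac{\log N}{2N} - \tfrac{C'}{N}.
\]
Since $\log b \geq 0$, the $\log b/N$ term is dominated by $2\log b/N$, and since $\log N \geq \log 2$ for $N \geq 2$ the additive $C'/N$ can be absorbed into a constant multiple of $\log N/N$; the $N = 1$ case is handled by enlarging $C$. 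This yields the claimed bound.

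The calculation is essentially routine; the only mild care needed is the bookkeeping of the $\log N$, $\log b$, and $O(1)$ terms in Stirling to confirm that they fit inside the $\tfrac{2\log b}{N} + C \tfrac{\log N}{N}$ tolerance allowed by the statement, and the observation that picking $L = 1$ with the monotonicity of $f_Y$ past the mode avoids any delicate Laplace-method analysis of the tail integral.
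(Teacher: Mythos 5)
Your argument is correct and follows essentially the same route as the paper: the paper writes the probability as a Gamma$(N/2)$ tail, bounds $y^{K-1}\ge (bK)^{K-1}$ on the range of integration and integrates $e^{-y}$ exactly, then applies Stirling; you instead reduce the tail to the value of the chi-square density at the endpoint (unit interval plus monotonicity past the mode), which is the same endpoint-evaluation-plus-Stirling computation and produces the same $-\tfrac12(b-1-\log b)$ leading term with the same $\log b/N$ and $\log N/N$ corrections.

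One small caveat: your remark that the case $N=1$ is ``handled by enlarging $C$'' does not work, because $\log 1=0$ makes the term $C\log N/N$ vanish there; in fact the stated inequality fails at $N=1$ (take $b$ slightly above $1$: then $\log\mathbb{P}(X_1^2\ge b)\approx\log(0.31)$ while the right-hand side is close to $0$). This is a defect of the statement at $N=1$ rather than of your main argument --- the paper's own proof has the same limitation, since the bound $y^{K-1}\ge(bK)^{K-1}$ requires $K=N/2\ge 1$ --- and it is harmless for the application, where $N\to\infty$. The clean fix is simply to assert the bound for $N\ge 2$ instead of trying to absorb the $N=1$ case into the constant.
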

\begin{proof}
If we let $K=N/2$, then
\[\prob ( S_N \ge 1 ) = \frac{1 }{\Gamma(K)}  \int_{bK}^\infty y^{K-1} e^{-y}\, dy.\]
In the integrand, we may bound $ y^{K-1} \ge  (bK)^{K-1}$, yielding
\[\prob ( S_N \ge 1 ) \ge \frac{(bK)^{K-1}}{\Gamma(K)}  \int_{bK}^\infty  e^{-y}\, dy = \frac{(bK)^{K-1}}{\Gamma(K)}  e^{-bK} .\]
By Stirling's approximation, it then follows that
\begin{align*} 
\frac{1}{K} \log \prob ( S_N \ge 1 ) 
									  &{\geq }  - (b-1-\log b)  - \frac{\log b}{K} {- } C\cdot\frac{\log K}{K}, 
\end{align*}
for some $C>0$ from which the result follows. 
\end{proof}
The following result is an elementary consequence of Gaussian integration by parts. For a proof
in the discrete setting, see, e.g., \cite[Lemma 1.1]{PanchSKBook}. The proof in our setting then follows by an elementary approximation argument.
\begin{lem}
Let $a(x)$ and $b(x)$ be centered Gaussian processes on $\mathbb S^{N-1}$ for any $N\geq 1$,
with smooth covariances, continuous mutual covariance 
\[
C(x^1,x^2) = \E a(x^1)b(x^2),
\]
which is assumed to be smooth and such that $\E \max a(x)$ is finite. Then if we let ${\pi(dx) = \exp(b(x)){dx}/Z}$, where $Z$ is 
chosen so that this is a probability measure,
\begin{equation}\label{eq:GGIBP}
\E \int a(x)\, d\pi = \E \int\int C(x^1,x^1)- C(x^1,x^2)\, d\pi^{\tensor 2}
\end{equation}
\end{lem}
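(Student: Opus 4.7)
The plan is to reduce to the finite-dimensional Gaussian integration by parts identity cited in \cite[Lemma 1.1]{PanchSKBook} via a discretization argument on $\mathbb S^{N-1}$.

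First, I would recall the finite state-space version: if $\Sigma$ is a finite set and $(a(\sigma),b(\sigma))_{\sigma\in\Sigma}$ is a centered jointly Gaussian family with mutual covariance $C(\sigma^1,\sigma^2)=\E a(\sigma^1)b(\sigma^2)$, then for the Gibbs measure $\pi(\sigma)\propto\exp(b(\sigma))$, the classical Gaussian integration by parts formula applied to the functional $F(b)=\exp(b(\sigma))/\sum_{\sigma'}\exp(b(\sigma'))$ (using $\partial_{b(\sigma')}F = \pi(\sigma)(\mathbf{1}[\sigma=\sigma']-\pi(\sigma'))$) yields
\[
\E\sum_\sigma a(\sigma)\pi(\sigma)=\E\sum_{\sigma^1}\pi(\sigma^1)C(\sigma^1,\sigma^1)-\E\sum_{\sigma^1,\sigma^2}\pi(\sigma^1)\pi(\sigma^2)C(\sigma^1,\sigma^2).
\]
This is exactly the discrete analogue of \eqref{eq:GGIBP}.

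Second, I would approximate the continuous processes via nets. Let $\{N_\ell\}$ be a sequence of $(1/\ell)$-nets on $\mathbb S^{N-1}$ with Voronoi partitions $\{V_\sigma\}_{\sigma\in N_\ell}$. Define piecewise-constant approximations $a_\ell(x)=a(\sigma)$ and $b_\ell(x)=b(\sigma)$ for $x\in V_\sigma$, and let $\pi_\ell(dx)\propto\exp(b_\ell(x))\,dx$. Applying the finite-dimensional identity to the Gaussian vector $(a(\sigma),b(\sigma))_{\sigma\in N_\ell}$ with weights $|V_\sigma|$ gives the identity at level $\ell$:
\[
\E\int a_\ell\,d\pi_\ell=\E\iint\bigl[C_\ell(x^1,x^1)-C_\ell(x^1,x^2)\bigr]\,d\pi_\ell^{\otimes 2},
\]
where $C_\ell$ is the corresponding piecewise-constant approximation of $C$.

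Finally, I would pass to the limit $\ell\to\infty$. Smoothness of the covariances of $a$ and $b$ implies that the sample paths $x\mapsto a(x)$, $x\mapsto b(x)$ are almost surely continuous on the compact sphere, so $a_\ell\to a$ and $b_\ell\to b$ uniformly almost surely; likewise $C_\ell\to C$ uniformly. The uniform convergence of $b_\ell$ forces $\pi_\ell\to\pi$ in total variation almost surely, whence $\int a_\ell\,d\pi_\ell\to\int a\,d\pi$ and the double integral on the right converges to $\iint(C(x^1,x^1)-C(x^1,x^2))\,d\pi^{\otimes 2}$ pointwise. For the interchange with $\E$ we use dominated convergence: the right-hand integrand is bounded by $2\sup_{x^1,x^2}|C(x^1,x^2)|$, which is finite and deterministic by continuity of $C$ on a compact set; the left-hand integrand is bounded by $\sup_x|a(x)|$, which is integrable under the assumption $\E\max a(x)<\infty$ (the lower tail being controlled by Borell's inequality applied to $b$ and the symmetry of centered Gaussians gives integrability of $\sup|a|$).

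The main technical point is simply ensuring that the discretization errors in $b_\ell$ translate into weak (in fact, total variation) convergence of $\pi_\ell$ to $\pi$; this is immediate once one has uniform convergence of $b_\ell$ to $b$, which is the standard almost-sure equicontinuity of smooth Gaussian processes on a compact parameter space. No replica-symmetry input is needed.
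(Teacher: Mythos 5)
Your proof is correct and follows essentially the same route as the paper: discretize the sphere, apply the finite-dimensional Gaussian integration by parts identity of \cite[Lemma 1.1]{PanchSKBook}, and pass to the limit using almost-sure continuity of the sample paths and dominated convergence. The only (immaterial) difference is that you discretize with deterministic nets and Voronoi weights, whereas the paper uses an empirical measure of sampled points converging weak-* to the uniform measure.
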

\begin{proof}
By the assumption on the covariances, the processes $a(x)$ and $b(x)$ are a.s. smooth \cite{AdlerTaylor}.
By the law of large numbers, there is a collection of points $(y^\ell)\in\mathbb S^{N-1}$, such that the empirical measure
\[
\cE_n = \frac{1}{n}\sum_{\ell=1}^n \delta_{y^\ell}
\]
converges weak-* to the uniform measure. Evidently, if we let
\[
\pi_n = \frac{\sum_{\ell=1}^n \exp(b(y^\ell))\delta_{y^\ell}}{\sum_{\ell=1}^n \exp(b(y^\ell))},
\]
then $\pi_n\to\pi$ weak-* a.s. By Gaussian integration by parts, \cite[Lemma 1.1]{PanchSKBook},
\[
\E \int a(x) d\pi_n = \E \int \int C(x^1,x^1)- C(x^1,x^2)d\pi_n^{\tensor 2}.
\]
Since, $\max a(x)$, has bounded mean. The result then follows by applying
the dominated convergence theorem to each side of this equality.   
\end{proof}

\end{appendices}

\bibliography{TensorPCA,references.bib} 

\bibliographystyle{amsplain}

\end{document}